\documentclass[12pt,leqno,fleqn]{amsart}
\usepackage{amsmath,amstext,amsthm,amssymb,amsxtra}
\usepackage{txfonts,pxfonts} %also txfonts
\usepackage[colorlinks,citecolor=red,hypertexnames=true]{hyperref}
\usepackage{pgf,tikz}
\usepackage{graphicx}
\usepackage{transparent}
\usepackage{mathtools}
%\mathtoolsset{showonlyrefs,showmanualtags}

\setlength{\textwidth}{16.6cm}
\setlength{\topmargin}{0cm}
\setlength{\oddsidemargin}{0cm}
\setlength{\evensidemargin}{0cm}
\setlength{\parskip}{11pt}
\allowdisplaybreaks

%% Theorems etc. %%%%%%%%%%%%%%
\swapnumbers 

\numberwithin{equation}{section}

\theoremstyle{plain}
\newtheorem{theorem}[equation]{Theorem}
\newtheorem{proposition}[equation]{Proposition}
\newtheorem{corollary}[equation]{Corollary}
\newtheorem{lemma}[equation]{Lemma}

\theoremstyle{definition}

\theoremstyle{remark}
\newtheorem{remark}[equation]{Remark}
\newtheorem{example}[equation]{Example}

\makeatletter
\@namedef{subjclassname@2010}{%
  \textup{2010} Mathematics Subject Classification}
\makeatother

% Norms: the arguments are deliminated with periods.

\def\norm#1.#2.{\lVert#1\rVert_{#2}}
\def\Norm#1.#2.{\bigl\lVert#1\bigr\rVert_{#2}}
\def\NOrm#1.#2.{\Bigl\lVert#1\Bigr\rVert_{#2}}
\def\NORm#1.#2.{\biggl\lVert#1\biggr\rVert_{#2}}
\def\NORM#1.#2.{\Biggl\lVert#1\Biggr\rVert_{#2}}

%  we do for innner product what we do for norm
%  above.  Use commas as argument deliminaters, so
%  that it's use is compatible with norms.

\def\ip#1,#2,{\langle #1,#2\rangle}
\def\Ip#1,#2,{\bigl\langle#1,#2\bigr\rangle}
\def\IP#1,#2,{\Bigl\langle#1,#2\Bigr\rangle}

% Absolute values
\def\abs#1{\lvert#1\rvert}
\def\Abs#1{\bigl\lvert#1\bigr\rvert}
\def\ABs#1{\biggl\lvert#1\biggr\rvert}

%%%%%%%%%%%% sets
\def\N{\mathbb N}

\def\R{\mathbb R}
\def\RR{\mathbb R _+}
\def\C{\mathbb C}

%%%%%%%%%% tiles&trees
\def\P{\mathbf P}
\def\T{\mathbf T}
\def\TT{\mathcal T}
\def\size{\operatorname{size}}

%%%%%%%%%%% definition, iff by definition
\def\eqdef{\stackrel{\mathrm{def}}{{}={}}}
\def\iffdef {\stackrel{\mathrm{def}}{{}\Leftrightarrow{}}}

%%%%%%%%%%%%% operators and indicators
\def\ind{\textnormal{\textbf 1}}
\newcommand{\Exp}[0]{\mathbb{E}}

%%%%%%%%%%%%%%%%%%%%%%%%%%%%%  Begin Document
\begin{document}
%%%%%%%%%%%%%%%%%%%%%%%%%%%%%  Title
\title[The vector valued quartile operator]{The vector valued quartile operator}
\subjclass[2000]{Primary:  42B20 Secondary: 46E40  }
% \keywords{}

\author[T.~Hyt\"onen]{Tuomas P. Hyt\"onen}
\address{Department of Mathematics and Statistics, P.O.B.~68 (Gustaf H\"all\-str\"omin katu~2b), FI-00014 University of Helsinki, Finland}
\email{tuomas.hytonen@helsinki.fi}
\thanks{T.H. and I.P. are supported by the European Union through the ERC Starting Grant ``Analytic-probabilistic methods for borderline singular integrals''.
T.H. is also supported by the Academy of Finland, grants 130166 and 133264.}

\author[M.T. Lacey]{Michael T. Lacey}
\address{School of Mathematics \\
Georgia Institute of Technology \\
Atlanta GA 30332 }
\email{lacey@math.gatech.edu}
\thanks{M.L. supported in part by the NSF grant 0968499, and a grant from the Simons Foundation (\#229596 to Michael Lacey). }

\author[I. Parissis]{Ioannis Parissis}
\address{Department of Mathematics and Statistics, P.O.B.~68 (Gustaf H\"all\-str\"omin katu~2b), FI-00014 University of Helsinki, Finland}
\email{ioannis.parissis@gmail.com}
\begin{abstract}
	Certain vector-valued inequalities are shown to hold for a  Walsh analog of the bilinear Hilbert transform. 
	These extensions are phrased in terms of a recent notion of quartile type of a UMD (Unconditional Martingale Differences)  Banach space $ X$.  
	Every known UMD Banach space has finite quartile type, and it was recently shown that the Walsh analog of 
	Carleson's Theorem holds under a closely related assumption of finite tile type.  
	For a Walsh model of the bilinear Hilbert transform however, the  quartile type should be sufficiently 
	close to that of a Hilbert space for our results to hold.  A full set of inequalities is quantified in terms of quartile type. 
\end{abstract}

\maketitle

%%%%%%%%%%%%%%%%%%%%%%%%%%%%%% SECTION  SECTION SECTION
%%%%%%%%%%%%%%%%%%%%%%%%%%%%%% SECTION  SECTION SECTION
\section{introduction}

We are interested in topics related to the pointwise convergence of Fourier series and allied issues 
\cite{Carleson:66,MR1491450}. 
We work in the vector-valued setting, so that our Banach spaces are of UMD  type: Martingale differences 
are unconditional in these spaces 
\cite{Burkholder:83}. 
In addition, following a well-known theme in this subject, this paper concentrates on Walsh 
models  of objects with a more natural formulation in the Fourier setting.  The Walsh setting allows one to avoid 
certain technicalities in the arguments.   The martingale structure is also easier to identify, clarifying the role of UMD 
in these questions. 

Two of us \cite{12020209} have established an 
extension of the Walsh model of the Carleson Theorem on Fourier series to all known examples 
of UMD spaces.   This extension is phrased in terms of a notion of \emph{tile type}, a closely related 
notion of \emph{quartile type} being defined  below. 
Very crudely, tile type is between $ 2$ and infinity, and measures how close the UMD space is 
to a Hilbert space.  It was shown that Hilbert space has tile type $ 2$ and, that 
every UMD space which is the complex interpolation space between a Hilbert space and some other worse UMD space, 
has tile type $q\in[ 2, \infty )$.  All known examples of UMD spaces are complex interpolation spaces of this form and, thus, have finite tile type. In particular, the space $ \ell ^{p}$ has tile type  $q\in( \max \{p, p/(p-1)\}, \infty )$.
 
Mere finiteness of tile type is sufficient for the Carleson Theorem.   
Parcet-Soria-Xu have established a weaker result \cite{PSX} valid for all UMD spaces. The validity of Carleson's 
Theorem on an arbitrary UMD space is  open.  

Following  the results of \cite{MR1491450}, there has been a variety of results established that depend 
upon the ideas and techniques of Carleson's Theorem.  The bilinear Hilbert transform is perhaps the first among 
equals among these results, and it, again in the Walsh setting as identified by \cite{Thiele:quartile}, is the main object 
of present concern.  In the Fourier setting, the bilinear Hilbert transform is the operator given by 
\begin{equation*}
	(f,g) \mapsto	\textup{p.v.}\int \! f (x+y) g (x-y) \; \frac {dy} y 
\end{equation*}
We show that there are meaningful extensions of the bilinear Hilbert transform to a UMD setting. 
Moreover, using the notion of quartile type, we find that there is an intricate 
relationship between quartile type and positive results for the vector valued quartile operator. We require that the 
UMD spaces are `sufficiently close' to a Hilbert space, as measured by the quartile type.
These 
results are made precise below. 
	In addition, the Banach spaces we consider need \emph{not} be lattices, a frequent assumption 
that simplifies certain proofs.

To take advantage of the symmetries of the operator, the quartile operator is frequently studied as a trilinear form $\Lambda$,  
a sum of a product of three rank-one wave-packet projections. It is extended below in these terms. 
Take three UMD spaces, $ X_1, X_2, X_3$, and a fixed trilinear form $ \Pi \;:\; X_1\times X_2\times X_3\to \C$.  
The trilinear form  $\Lambda$  is then a sum of $ \Pi $ evaluated on certain wave-packet projections of $ f_j \in L ^{p_j} (\mathbb R _+ , X_j)$. 
A natural case of the trilinear form is to take $ X_2= X_1 ^{\ast} $ and $ X_3= \mathbb C $, so that 
$ \Pi (x,y,z)= \langle x,y \rangle \cdot z$, with the inner product denoting the duality pairing between $ X_1$ and $ X_1 ^{\ast} $.  
This is the \emph{duality trilinear form}.  

In the scalar case,  an important role is played by the set of inequalities of  the locally square integrable case.  In the UMD setting, this is played by $ q_j \le p_j < \infty $, where $ q_j$ is the quartile type of the space $ X_j$. 
We find it necessary to assume that 
$ \sum_{j} 1/q_j > 1$, which is the quantification of the triple of UMD spaces being sufficiently close to Hilbert spaces.  With the duality trilinear form,  this condition reduces to the quartile type of $X$ being $ q\in[2,4)$, that is  we can only prove results for an $ \ell ^{p}$ valued 
quartile operator in the case
$ \lvert  \frac 12 - \frac 1p\rvert < \frac 14 $. 
Again in the scalar case, the bilinear Hilbert transform satisfies a range of inequalities that permit one to break the duality in the pair 
of indices in question. There are similar phenomena in the vector valued case but the inequalities are more intricate and become restrictive 
as $  \sum_{j} 1/q_j $ approaches one.  With the duality trilinear form, there is a dichotomy between the cases of $q\in[2,3]$ and $q\in(3,4)$.

With precise definitions to follow, one of the main results of this paper is below. 

%%%%%%%%%%%%%%%%%%%%%%%%%%%%%% THEOREM THEOREM THEOREM
%%%%%%%%%%%%%%%%%%%%%%%%%%%%%% THEOREM THEOREM THEOREM
\begin{theorem}\label{t.main-local}
For $v=1,2,3$,  let $X_v$ be a UMD Banach space of quartile type $q_v$. Suppose that the quartile types satisfy
\begin{align*}
	\frac{1}{q_1}+\frac{1}{q_2}+\frac{1}{q_3}>1,
\end{align*}
and that the exponents $p_1,p_2$ and $r$ satisfy
\begin{align*}
	\frac{1}{r }=\frac{1}{p_1}+\frac{1}{p_2},\quad q_1<p_1<\infty,\quad q_2<p_2<\infty, \quad 1<r <q_3 '.
\end{align*}
Then the bilinear quartile operator extends to a bounded operator from $L^{p_1}(\RR;X_1)\times L^{p_2}(\RR;X_2)$ into $L^{r}(\RR;X_3)$, that is
\begin{align*}
	\norm B(f_1,f_2). L^{r}(\RR;X_3). \lesssim \norm f_1.L^{p_1}(\RR;X_1). \norm f_2.L^{p_2}(\RR;X_2)..
\end{align*}
\end{theorem}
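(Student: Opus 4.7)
The plan is to adapt the time--frequency / tree-selection strategy of Thiele for the scalar quartile operator to the vector-valued setting, with the assumed quartile type of each $X_v$ playing the role that Hilbert-space orthogonality plays in the scalar case. By duality the bilinear bound of the theorem reduces to a trilinear estimate of the shape $\Abs{\Lambda(f_1,f_2,f_3)}\lesssim\prod_v\norm f_v.L^{p_v}(\RR;X_v).$ with $1/p_1+1/p_2+1/p_3=1$ and $p_3>q_3$, the latter being equivalent to the assumption $r<q_3'$.

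The first step is the discretization: write $\Lambda$ as a sum over quartiles $Q$ of rank-one wave-packet contributions $\Lambda_Q(f_1,f_2,f_3)$, organise the quartiles into trees $T$ with a common top tile, and for each $v$ and each $T$ introduce a \emph{size} $\size_v(T;f_v)$ measuring the $X_v$-norm of the wave-packet coefficients of $f_v$ along $T$ in an $L^{q_v}$-averaged sense, calibrated so that the defining inequality for quartile type $q_v$ produces a clean tree estimate
\begin{equation*}
\Abs{\Lambda_T(f_1,f_2,f_3)}\lesssim \abs{I_T}\prod_{v=1}^{3}\size_v(T;f_v).
\end{equation*}

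Next I would run the standard John--Nirenberg-type selection algorithm, as in \cite{12020209}, splitting the collection of all quartiles into forests $\mathcal F_{n_1,n_2,n_3}$ on whose trees the $v$-th size has dyadic order $2^{-n_v}$. A Bessel/energy inequality, itself a consequence of quartile type $q_v$, controls the total shadow measure of such a forest by $2^{n_v q_v}\,\norm f_v.L^{p_v}(\RR;X_v).^{p_v}$. Summing the tree estimate over each forest and then over $(n_1,n_2,n_3)\in\mathbb Z^3$ produces a multi-parameter geometric series whose convergence is driven precisely by $1/q_1+1/q_2+1/q_3>1$, while the subcritical conditions $q_v<p_v<\infty$ for $v=1,2$ and $r<q_3'$ supply strictly positive gain in each direction. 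The resulting restricted weak-type estimates are upgraded to strong type in the open stated range by multilinear Marcinkiewicz interpolation.

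The principal obstacle is the tree estimate itself. In the scalar setting one uses that wave packets indexed by a tree are orthogonal, which has no analogue in a general UMD space; the only available substitute is the very definition of quartile type $q_v$, which must be applied level by level inside each tree via an iteration that reduces the multi-scale wave-packet sum to one Bessel-type inequality per input. Because $\Pi$ is abstract and no lattice structure is imposed on the $X_v$, the three quartile types enter symmetrically and multiplicatively, which is precisely why the threshold must take the form $\sum_v 1/q_v>1$ and why the admissible range of $r$ degenerates as this threshold is approached. A secondary technical point is executing the selection algorithm on Bochner spaces without a pointwise maximal function, which is handled by replacing maximal estimates with the UMD-valued square-function bounds implicit in the definition of quartile type.
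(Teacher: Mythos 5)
Your overall architecture — tree estimate of the form $\abs{\Lambda_T}\lesssim\abs{I_T}\prod_v\size_v(T)$, a size/selection lemma whose counting estimate comes from the quartile-type inequality, summation over the resulting forests, and interpolation of generalized restricted weak-type bounds — is the same as the paper's. But two steps, as you describe them, do not close. First, the summation: with only the two pieces of information you extract from the selection (size of order $2^{-n_v}$ on each forest, total shadow $\lesssim 2^{n_vq_v}\norm f_v.L^{q_v}.^{q_v}$ — note $q_v$, not $p_v$, since quartile type is an $L^{q_v}$ statement), the series is $\sum_n 2^{-n}\prod_v 2^{n/q_v}\abs{E_v}^{1/q_v}=\bigl(\prod_v\abs{E_v}^{1/q_v}\bigr)\sum_n 2^{n\rho}$ with $\rho=\sum_v 1/q_v-1>0$, which diverges in one direction; $\rho>0$ controls only one tail. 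The missing ingredient is a second, competing upper bound on the size. The paper obtains $\size_v(\P')\lesssim\abs{E_v}/\abs{E_\tau}$ by removing the exceptional set $F=\bigcup_v\{M\ind_{E_v}>8\abs{E_v}/\abs{E_\tau}\}$ to define the major subset $E_\tau'$, and then takes the minimum of the two bounds, with the crossover at $\delta\simeq\abs{E_\tau}$ balancing the two tails. Your closing remark that no pointwise maximal function is available in the Bochner setting is a red herring: the maximal function is applied to the scalar indicators $\ind_{E_v}$, exactly as in the scalar theory; what does require care (and is supplied by the paper's weak $(1,1)$ bounds for UMD-valued Haar shifts) is converting the pointwise bound $\inf_{I_P}M\ind_{E_v}\lesssim\abs{E_v}/\abs{E_\tau}$ into the size bound.

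Second, you locate the use of quartile type in the single-tree estimate, where it is neither needed nor helpful. Inside a $u$-tree the tiles $P_v$ with $v\neq u$ are pairwise disjoint, and by the paper's Lemma~\ref{l.haarshift} the sum $\sum_{P\in\T}\ip f_v,w_{P_v},w_{P_v}$ is, up to the unimodular top modulation $w_{T_u}^\infty$, a Haar-shift expansion; the tree estimate then follows from the UMD property alone (randomized martingale transforms, the Kahane contraction principle) together with a combinatorial convexity lemma that replaces the averages $\langle\tilde f_u\rangle_{I_P}$ by a single function bounded by the $u$-size. Quartile type is used exactly once, as the Bessel inequality across the \emph{disjoint} trees produced by the selection algorithm — this is why it is formulated for $(v,u)$-good collections of trees rather than for a single tree — and verifying that the selected trees form such a good collection (via the ordering of the frequency centers of the tops) is the real content of the size lemma, which your sketch leaves untouched.
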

%%%%%%%%%%%%%%%%%%%%%%%%%%%%%% THEOREM THEOREM THEOREM
%%%%%%%%%%%%%%%%%%%%%%%%%%%%%% THEOREM THEOREM THEOREM
Theorem \ref{t.main-local} contains the local $L^{q_1}\times L^{q_2}$ estimates for the quartile operator and can be thought of as the vector-valued analogue of the local $L^2$-theorem for the scalar bilinear Hilbert transform \cite{MR1491450}. A full set of inequalities is discussed in the concluding section of the paper. In particular, Theorem \ref{t.main-local} is a special case of our main result, Theorem \ref{thm:main}, which also contains estimates for the quartile operator `below' the quartile types.

Prabath Silva \cite[Theorem 1.9]{Silva:BP} has proved vector-valued inequalities for the bilinear Hilbert transform, concentrating 
on sequence spaces.    A special case of \cite[Theorem 1.9]{Silva:BP} concerns the  natural bilinear product $\Pi_3:\ell^R\times \ell^\infty \to \ell^R$ which is induced by the trilinear form $\Pi:\ell^R\times\ell^\infty\times \ell^{R'}\to \C$.
The trilinear operator is the pointwise product and sum.  And, it is required that $ 4/3 < R < 4$, which is similar to 
our restriction, in the duality case. \color{black} But, of course $ \ell ^{\infty } $ is not a UMD space. Thus,   
one has non-trivial vector-valued inequalities for non-UMD spaces for the bilinear Hilbert transform.  
Appropriate weighted inequalities for the bilinear Hilbert transform are not known. 
The relatively intricate nature of our and Silva's results might shed some light on this issue.
Interestingly, Silva applies his vector-valued inequalities to a question concerning the bilinear Hilbert transform, tensored with a paraproduct operator.

In \S\ref{s.quartiles}, definitions of quartiles and wave packets are recalled; in \S\ref{s.haarShifts} we discuss the $L^p$-boundedness of UMD-valued Haar-shifts, in the specific context that these operators appear within the paper; quartile type is defined and discussed in 
\S\ref{s.quartileType}; the quartile operator is discuss in \S\ref{s.quartileOperator}, while we refer the reader to 
\cite{Thiele:quartile} for an explanation of how the quartile operator is related to the bilinear Hilbert transform. In sections \S\ref{s.tree} and \S\ref{s.size} we recall some standard notions from time-frequency analysis and prove the tree lemma and the size lemma in the vector-valued setup. Section \S\ref{s.local} contains the local $L^{q_1}\times L^{q_2}$-estimates of Theorem \ref{t.main-local} while the `below quartile type' estimates are given in \S\ref{s.below} together with our main result, Theorem \ref{thm:main}. 

A few words about our notation. We write $A \lesssim B$ whenever $A\leq c B$ for some numerical constant $c>0$. Our constants typically depend on the quartile types of the Banach spaces as well as on the exponents of the $L^p$-spaces under consideration, a fact which we suppress in most places below. We write $\langle f \rangle_I\eqdef \fint_I f\eqdef \frac{1}{|I|}\int_I f$ for any locally integrable function $f$. Finally, for any dyadic interval $J$ we write $J^{(1)}$ for the dyadic parent of $J$ and $J_l,J_r$ for the left and right dyadic child of $J$, respectively. 

%%%%%%%%%%%%%%%%%%%%%%%%%%%%%% SECTION  SECTION SECTION
%%%%%%%%%%%%%%%%%%%%%%%%%%%%%% SECTION  SECTION SECTION
\section{Quartiles and wave packets}\label{s.quartiles}

We define several geometric objects, which all live in the phase space $\R_+\times\R_+$.
A quartile $P\subset \RR\times \RR$ is a dyadic rectangle of area $4$, namely 
\begin{align*}
P&=I_P\times \omega_P=I_P\times \frac{1}{|I|}[4n,4(n+1))\\
&= \bigcup_{v=0} ^3 I_P\times \frac{1}{|I_P|} [4n+v,4n+v+1) 
\\
& \eqdef \bigcup_{v=0} ^3 I_P\times \omega_{P_v}\eqdef\bigcup_{v=0} ^3 P_v.
\end{align*}
Note that $I_P$ can be written in the form $I_P=I_{P_l}\cup I_{P_r}$ where $I_{P_l}$, $I_{P_r}$ is the left and right dyadic child of $I_P$, respectively.

If $P,P'$ are quartiles, or more generally dyadic rectangles of equal area, we write $P\leq P'$ if $I_P\subset I_{P'}$ and $\omega_{P'} \subset \omega_{P}$. For any $v\in\{0,1,2,3\}$ we also define 
\begin{align*}
P\leq_v P' \iffdef P_v\leq P_v  '.
\end{align*}
%%%%%%%%%%%%%%%%%%%%%%%%%%%%%% DEFINITION  DEFINITION DEFINITION:tree&convex tree
%%%%%%%%%%%%%%%%%%%%%%%%%%%%%% DEFINITION  DEFINITION DEFINITION
A tree $\T$ is a collection of quartiles $P$ such that $P\leq T$ for some quartile $T$ which is not necessarily part of the collection. The quartile $T$ is a called \emph{a top} of the tree $\T$. Observe that the top of a tree is not uniquely defined. We will also say that a collection of quartiles $\T$ is a $u$-tree, where $u\in\{0,1,2,3\}$, if $P\leq_u T$ for all $P\in\T$ and some top $T$.

We mainly consider collections of quartiles with a certain convexity property. We will say that a collection of quartiles $\P$ is \emph{convex} if $P',P''\in\P$ and $P'\leq P\leq P''$ imply that $P\in \P$.
%%%%%%%%%%%%%%%%%%%%%%%%%%%%%% DEFINITION  DEFINITION DEFINITION
%%%%%%%%%%%%%%%%%%%%%%%%%%%%%% DEFINITION  DEFINITION DEFINITION

Here are some basic facts about trees:

\begin{list}{}{}
	\item[(i)] Suppose that $\T$ is a $v$-tree for some $v\in\{0,1,2,3\}$. Then for every $u\in\{0,1,2,3\}$ with $u\neq v$ we have that $P_{u}\cap P' _{u}=\emptyset$ for all $P,P'\in\T$ with $P\neq P'$.
	\item[(ii)] If $\T$ is a tree then by setting $\T_v\eqdef \{P\in \T: P\leq_v T\}$, $v\in\{0,1,2,3\}$, we have that $\T=\cup_{v=0} ^3 \T_v$ and each $\T_v$ is a $v$-tree.
	\item[(iii)] All tiles $P_0, P_1, P_2, P_3$ have the same time interval $I_P$ and corresponding frequency intervals $\omega_{P_0},  \omega_{P_1}, \omega_{P_2}, \omega_{P_3}$; thus $P_v=I_P\times \omega_{P_v}$ for $v\in\{0,1,2,3\}$.
\end{list}

\begin{figure}[htb]
\centering
 \def\svgwidth{200pt}
%% Creator: Inkscape inkscape 0.48.2, www.inkscape.org
%% PDF/EPS/PS + LaTeX output extension by Johan Engelen, 2010
%% Accompanies image file '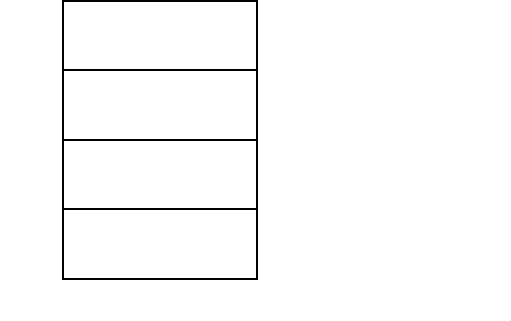' (pdf, eps, ps)
%%
%% To include the image in your LaTeX document, write
%%   \input{<filename>.pdf_tex}
%%  instead of 
%%   \includegraphics{<filename>.pdf}
%% To scale the image, write
%%   \def\svgwidth{<desired width>}
%%   \input{<filename>.pdf_tex}
%%  instead of
%%   \includegraphics[width=<desired width>]{<filename>.pdf}
%%
%% Images with a different path to the parent latex file can
%% be accessed with the `import' package (which may need to be
%% installed) using
%%   \usepackage{import}
%% in the preamble, and then including the image with
%%   \import{<path to file>}{<filename>.pdf_tex}
%% Alternatively, one can specify
%%   \graphicspath{{<path to file>/}}
%% 
%% For more information, please see info/svg-inkscape on CTAN:
%%   http://tug.ctan.org/tex-archive/info/svg-inkscape
%%
\begingroup%
  \makeatletter%
  \providecommand\color[2][]{%
    \errmessage{(Inkscape) Color is used for the text in Inkscape, but the package 'color.sty' is not loaded}%
    \renewcommand\color[2][]{}%
  }%
  \providecommand\transparent[1]{%
    \errmessage{(Inkscape) Transparency is used (non-zero) for the text in Inkscape, but the package 'transparent.sty' is not loaded}%
    \renewcommand\transparent[1]{}%
  }%
  \providecommand\rotatebox[2]{#2}%
  \ifx\svgwidth\undefined%
    \setlength{\unitlength}{147.01300049bp}%
    \ifx\svgscale\undefined%
      \relax%
    \else%
      \setlength{\unitlength}{\unitlength * \real{\svgscale}}%
    \fi%
  \else%
    \setlength{\unitlength}{\svgwidth}%
  \fi%
  \global\let\svgwidth\undefined%
  \global\let\svgscale\undefined%
  \makeatother%
  \begin{picture}(1,0.62754701)%
    \put(0,0){\includegraphics[width=\unitlength]{quartile.pdf}}%
    \put(0.27675933,0.12563129){\color[rgb]{0,0,0}\makebox(0,0)[lb]{\smash{$P_0$}}}%
    \put(0.27675933,0.25623199){\color[rgb]{0,0,0}\makebox(0,0)[lb]{\smash{$P_1$}}}%
    \put(0.27675933,0.39771608){\color[rgb]{0,0,0}\makebox(0,0)[lb]{\smash{$P_2$}}}%
    \put(0.27675933,0.52831677){\color[rgb]{0,0,0}\makebox(0,0)[lb]{\smash{$P_3$}}}%
    \put(0.28648946,0.01283369){\color[rgb]{0,0,0}\makebox(0,0)[lb]{\smash{$I_P$}}}%
    \put(0.53796072,0.12563129){\color[rgb]{0,0,0}\makebox(0,0)[lb]{\smash{$\omega_{P_0}$}}}%
    \put(0.53796072,0.25623199){\color[rgb]{0,0,0}\makebox(0,0)[lb]{\smash{$\omega_{P_1}$}}}%
    \put(0.53796072,0.39771608){\color[rgb]{0,0,0}\makebox(0,0)[lb]{\smash{$\omega_{P_2}$}}}%
    \put(0.53796072,0.52831677){\color[rgb]{0,0,0}\makebox(0,0)[lb]{\smash{$\omega_{P_3}$}}}%
    \put(-0.00451703,0.32890772){\color[rgb]{0,0,0}\makebox(0,0)[lb]{\smash{$\omega_P$}}}%
  \end{picture}%
\endgroup%
\caption[A quartile $P$]{A quartile $P=I_P\times \omega_P$.}
\end{figure}

We will now introduce wave packets adopted to each quartile $P$ of the time frequency plane. We first consider the Rademacher functions
\begin{align*}
	r_i(x)\eqdef \operatorname{sgn} \sin(2\pi\cdot 2^i x),\quad i\in\N,
\end{align*}
where $\N$ denotes the set of non-negative integers. If $n\in\N$ has the binary expansion $n=\sum_{i=0} ^\infty 2^i n_i$ we define the $n$-th Walsh function as
\begin{align*}
	w_n(x)\eqdef \prod_{i=0} ^\infty r_i(x)^{n_i},\quad x\in\RR.
\end{align*}
The set $\{\ind _{[0,1)}w_n\}_{n\in\N}$ is an orthonormal basis of $L^2(0,1)$.

A \emph{tile} is a dyadic rectangle $P=I\times \omega\subset \RR\times \RR$ of area $1$. Given a tile $P$ we define wave packets localized in time and frequency on $P$ as follows. If $P=I_P\times|I_P|^{-1}[n,n+1)$ we set
\begin{align*}
	w_{P}(x)\eqdef { w_{I_P,n}(x)\eqdef } \frac{1}{|I_P|^\frac{1}{2}} \ind_{I_P}(x)w_{n}(\frac{x}{|I_P|})=\frac{1}{|I_P|^\frac{1}{2}}w_{P} ^\infty(x).
\end{align*}
Note that the wave packets are $L^2$-normalized. To stress the difference we denote the corresponding $L^\infty$-normalized wave packets by
\begin{align*}
	w_{P} ^\infty(x)=\ind_{I_P}(x)w_{n}(\frac{x}{|I_P|}).
\end{align*}
For a quartile  $P=I_P\times \omega_P=\bigcup_{v=0} ^3 I_P\times \omega_{P_v}$ each $P_1,P_2,P_3$ is a tile and the previous definition applies. Thus the corresponding wave packets are 
\begin{align*}
	w_{P_v}(x)= \frac{1}{|I_P|^\frac{1}{2}} \ind_{I_P}(x)w_{n_v}(\frac{x}{|I_P|})=\frac{1}{|I_P|^\frac{1}{2}}w_{P_v} ^\infty(x), \quad v\in\{1,2,3\},
\end{align*}
where $P_v=I_P\times |I_P|^{-1}[n_v,n_v+1)$. We will not need the $0$-th wave packet $w_{P_0}$ in what follows so we deliberately omit the choice $v=0$.

The following lemma is simple but fundamental in all our considerations. It gives a description of the wave packets for quartiles of a tree in terms of the simpler functions considered in Remark \ref{rem:WalshVsHaar}. Observe that the Haar functions are given as
\begin{align*}
	h_I(x)\eqdef \frac{1}{|I|^\frac{1}{2}}\ind_I(x)r_0(\frac{x}{|I|})=w_{I,1}(x).
\end{align*}

%%%%%%%%%%%%%%%%%%%%%%%%%%%%%% LEMMA  LEMMA LEMMA:a tree corresponds to a 'haar shift operator'
%%%%%%%%%%%%%%%%%%%%%%%%%%%%%% LEMMA  LEMMA LEMMA
\begin{lemma}\label{l.haarshift}
Let $u,v\in\{0,1,2,3\}$ and suppose that $\T$ is a $u$-tree with top $T$. 
Then for all $P\in\T$, we have
\begin{align*}
w_{P_v}(x)=\epsilon_{PT}\cdot w_{T_u } ^\infty(x) \cdot  w_{I_P,m} (x),
\end{align*}
for some constant factor $\epsilon_{PT}\in \{-1,+1\}$, and a fixed integer $m=m_0+2m_1\in\{0,1,2,3\}$, where $m_i\in\{0,1\}$ is defined by $m_i=u_i+v_i\mod 2$ and $u_i,v_i$ come from the dyadic expansions of $u$, $v$, respectively.
Thus $m=0$ if and only if $u=v$, in which case
\begin{align*}
w_{P_u}(x)=\epsilon_{PT}\cdot w_{T_u } ^\infty(x) \cdot  \frac{\ind_{I_P}(x)}{|I_P|^{1/2}},
\end{align*}

In particular,
\begin{align*}
\ip f,w_{P_v}, w_{P_v} = \ip f \cdot w_{T_u } ^\infty, w_{I_P,m}, \cdot w_{I_P,m}\cdot w_{T_u } ^\infty,
\end{align*}
and for $u=v$,
\begin{equation*}
  \ip f, w_{P_u},w_{P_u}=\langle f \cdot w_{T_u} ^\infty \rangle_{I_P}\cdot w_{T_u} ^\infty \cdot \ind _{I_P}.
\end{equation*}
Note that $m>0$ implies that $ w_{I_P,m}$ has mean-zero.
\end{lemma}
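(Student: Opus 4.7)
The plan is to unwind the definitions and reduce the claim to a bit-level identity for Walsh functions. Fix $P\in\T$ and write $\omega_{P_u}=|I_P|^{-1}[4n+u,4n+u+1)$, $\omega_{T_u}=|I_T|^{-1}[4N+u,4N+u+1)$. The relation $P\leq_u T$ forces $I_P\subset I_T$ (set $|I_T|/|I_P|=2^k$) and $\omega_{T_u}\subset\omega_{P_u}$, which translates into the arithmetic constraint $4N+u=2^k(4n+u)+j$ for some $j\in\{0,1,\dots,2^k-1\}$. After the substitution $y=x/|I_T|$ (so $x/|I_P|=2^k y$), the claim to prove becomes
\begin{equation*}
   w_{4n+v}(2^k y)\;=\;\epsilon_{PT}\cdot w_{4N+u}(y)\cdot w_m(2^k y)
\end{equation*}
for $y\in I_P/|I_T|$, which is a purely combinatorial statement about Walsh indices.

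For this I would lean on three elementary Walsh identities: (i) the scaling rule $w_a(2^k y)=w_{2^k a}(y)$, which follows immediately from $r_i(2^k y)=r_{i+k}(y)$; (ii) the multiplication rule $w_a w_b=w_{a\oplus b}$, coming from $r_i^2=1$; and (iii) the restriction principle on the small dyadic interval $I_P/|I_T|$ of length $2^{-k}$: the Rademachers $r_0,\dots,r_{k-1}$ are constant signs $\sigma_0,\dots,\sigma_{k-1}\in\{\pm1\}$ there, so for any $a\in\N$,
\begin{equation*}
   w_a(y)\;=\;\Bigl(\prod_{i<k}\sigma_i^{a_i}\Bigr)\cdot w_{a^{\geq k}}(y),\qquad y\in I_P/|I_T|,
\end{equation*}
where $a^{\geq k}$ is $a$ with its low $k$ bits zeroed. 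Applying (iii) to $a=4N+u$ produces the sign $\epsilon_{PT}\eqdef\prod_{i<k}\sigma_i^{j_i}\in\{\pm 1\}$ (depending on $P$ and $T$ through $j$ and $I_P$), together with the clean high-frequency part $(4N+u)^{\geq k}=2^k(4n+u)$.

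The main bookkeeping step, and the one I expect to be the fiddliest, is verifying the XOR: by (i) and (ii),
\begin{equation*}
   w_{4N+u}(y)\cdot w_m(2^k y) \;=\; \epsilon_{PT}\cdot w_{2^k(4n+u)}(y)\cdot w_{2^k m}(y) \;=\; \epsilon_{PT}\cdot w_{2^k[(4n+u)\oplus m]}(y),
\end{equation*}
and one must check $(4n+u)\oplus m=4n+v$. This is immediate from the hypothesis $m_i=u_i\oplus v_i$ for $i\in\{0,1\}$ and the fact that $4n$ uses only bits $\geq 2$, so bitwise addition and XOR agree there. The left-hand side is $w_{4n+v}(2^k y)=w_{2^k(4n+v)}(y)$, matching the right-hand side, which establishes the identity. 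The special case $u=v$ forces $m=0$, so $w_{I_P,0}=|I_P|^{-1/2}\ind_{I_P}$ and the stated simplified formula follows. The identities for $\ip f,w_{P_v},w_{P_v}$ are then obtained by direct substitution, absorbing $\epsilon_{PT}^2=1$ and using that $w_{T_u}^\infty$ is $\pm 1$-valued on $I_P\subset I_T$ so it may be moved between the two factors of the inner product. Finally, $m\in\{1,2,3\}$ means $w_m$ involves $r_0$ or $r_1$, both mean-zero on $[0,1)$, hence $w_{I_P,m}$ has mean zero on $I_P$.
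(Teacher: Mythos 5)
Your proposal is correct and follows essentially the same route as the paper's proof: both reduce the identity to Walsh-index arithmetic via the scaling rule $w_a(2^k y)=w_{2^k a}(y)$, the multiplication rule $w_a w_b=w_{a\oplus b}$, and the observation that $r_0,\dots,r_{k-1}$ are constant on the rescaled interval $I_P/|I_T|$ (yielding $\epsilon_{PT}$ as a product of low-bit Rademacher values), with the XOR identity $(4n+u)\oplus m=4n+v$ playing the same role as the paper's step $w_m=w_u\cdot w_v$. The concluding deductions (the $u=v$ specialization, the inner-product identities via $\epsilon_{PT}^2=1$ and the unimodularity of $w_{T_u}^\infty$, and the mean-zero claim) match the paper as well.
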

%%%%%%%%%%%%%%%%%%%%%%%%%%%%%% LEMMA  LEMMA LEMMA
%%%%%%%%%%%%%%%%%%%%%%%%%%%%%% LEMMA  LEMMA LEMMA

%%%%%%%%%%%%%%%%%%%%%%%%%%%%%% PROOF  PROOF PROOF
%%%%%%%%%%%%%%%%%%%%%%%%%%%%%% PROOF  PROOF PROOF

\begin{proof}
	We have $T_u=I_T\times\abs{I_T}^{-1}[n_T,n_T+1)$, with   $n_T \equiv u \mod 4$. Consider an element $P\in\mathbf{T}$ with $P_u=I_P\times\abs{I_P}^{-1}[n_P,n_P+1)$,   with  $n_P \equiv u\mod 4$, and let $2^{-k}\eqdef \abs{I_P}/\abs{I_T}$. Then $P_u\leq T_u$ says that
\begin{equation*}
  I_P\subseteq I_T\qquad\text{and}\qquad
  2^{-k}(n_T+1)-1\leq n_P\leq 2^{-k}n_T.
\end{equation*}
If $n_T=\sum_{i=0}^{\infty}2^i n_i$, then the unique integer value of $n_P$ in the given range is
\begin{equation*}
  n_P=\sum_{i=k}^{\infty} 2^{i-k}n_i,
\end{equation*}
which is equivalent to $ u \mod 4$ if and only if $n_k+ 2 n _{k+1}=u=u_0+2u_1$, if and only if $n_{k+i}=u_i$ for $i=0,1$.
For those values of $k$, we have $P_v=I_P\times\abs{I_P}^{-1}[n_P'+v,n_P'+v+1)$, where 
$ n'_P = \sum_{i=k+2}^{\infty} 2^{i-k}n_i$. 
Hence  
\begin{equation*}
\begin{split}
w_{P_v}
&=\frac{\textbf 1_{I_P}}{\abs{I_P}^{1/2}}w_{n_P'+v}\Big(\frac{\cdot}{\abs{I_P}}\Big)
=\frac{\textbf  1_{I_P}}{\abs{I_P}^{1/2}}w_{2^k (n_P'+v)}\Big(\frac{\cdot}{\abs{I_T}}\Big) 
  \\
  &=\frac{\textbf  1_{I_P}}{\abs{I_P}^{1/2}} w _{2^kv}\Big(\frac{\cdot}{\abs{I_T}}\Big) \prod_{i=k+2}^{\infty}r_i\Big(\frac{\cdot}{\abs{I_T}}\Big)^{n_i} 
  \\
  &=\frac{\textbf 1_{I_P}}{\abs{I_P}^{1/2}}\prod_{i=0}^{\infty}r_i\Big(\frac{\cdot}{\abs{I_T}}\Big)^{n_i}\times w _{v}\Big(\frac{\cdot}{\abs{I_P}}\Big) 
  \times
        \prod_{i=0}^{k+1}r_i\Big(\frac{\cdot}{\abs{I_T}}\Big)^{n_i} \\  
  &\overset{(*)}{=}\frac{\textbf 1_{I_P}}{\abs{I_P}^{1/2}}\prod_{i=0}^{\infty}r_i\Big(\frac{\cdot}{\abs{I_T}}\Big)^{n_i}\times w _{m}\Big(\frac{\cdot}{\abs{I_P}}\Big) 
  \times
        \prod_{i=0}^{k-1}r_i\Big(\frac{\cdot}{\abs{I_T}}\Big)^{n_i} \\  
        &=1_{I_T}w_{n_T}\Big(\frac{\cdot}{\abs{I_T}}\Big)\times\frac{\textbf 1_{I_P}}{\abs{I_P}^{1/2}} w_ {m}\Big(\frac{\cdot}{\abs{I_P}}\Big)\times
        \prod_{i=0}^{k-1}r_i\Big(\frac{\cdot}{2^k\abs{I_P}}\Big)^{n_i} 
        \\
        &=w_{T_u}^\infty\times w_{I_P,m }\times
        \prod_{i=0}^{k-1}r_i\Big(\frac{\cdot}{2^k\abs{I_P}}\Big)^{n_i} .    
\end{split}
\end{equation*}
Note that $n_k+2n_{k+1}=u$ was used in $(*)$, together with $r_i^2\equiv 1$.  In this line, we write
\begin{equation*}
  \prod_{i=0}^1 r_{k+i}\Big(\frac{\cdot}{\abs{I_T}}\Big)^{n_{k+i}}
  =\prod_{i=0}^1 r_{i}\Big(\frac{\cdot}{\abs{I_P}}\Big)^{u_i}
  =w_u\Big(\frac{\cdot}{\abs{I_P}}\Big),
\end{equation*}
and then  $w _{m}=w _{u} \cdot w _{v}$, where $m_i=u_i+v_i\mod 2$.
Note also that the last product takes a constant value on $I_P$, as $r_i$ is constant over dyadic intervals of length $2^{-i-1}$; this is our $\epsilon_{PT}$. The second claim follows from $\epsilon_{PT}^2=1$.
\end{proof}
%%%%%%%%%%%%%%%%%%%%%%%%%%%%%% PROOF  PROOF PROOF
%%%%%%%%%%%%%%%%%%%%%%%%%%%%%% PROOF  PROOF PROOF

%%%%%%%%%%%%%%%%%%%%%%%%%%%%%% SECTION  SECTION SECTION
%%%%%%%%%%%%%%%%%%%%%%%%%%%%%% SECTION  SECTION SECTION
\section{Haar shifts on a UMD Banach space} \label{s.haarShifts}
At several points below, we will need to control the $L^p(\R;X)$ norm of simple ``shifts'' of the Haar expansion of $f\in L^p(\R;X)$,  where $X$ is some UMD Banach space. Such Haar shifts have played an important role in some recent developments in Harmonic Analysis, and quite an extensive theory is available in the scalar-valued $L^p(\R)$ spaces. In the present vector-valued context, we only need relatively restricted aspects of this theory, so we choose not to develop it in a comprehensive way. We simply record for later reference the following useful estimate, where the required control is easily provided by the UMD property of $X$:
%%%%%%%%%%%%%%%%%%%%%%%%%%%%%% LEMMA  LEMMA LEMMA
%%%%%%%%%%%%%%%%%%%%%%%%%%%%%% LEMMA  LEMMA LEMMA
\begin{lemma}\label{lem:HaarShift}
For an interval $I$, let $I_\ell$ and $I_r$ be its left and right halves, and $I_0:=I$. For any choice of $a,b\in\{0,\ell,r\}$ and $1<p<\infty$, we have the estimate
\begin{equation*}
  \NOrm \sum_{I\in\mathcal{J}} h_{I_a}\ip h_{I_b}, f, . L^p(\R;X).
  \lesssim \NOrm \sum_{I\in\mathcal{J}} h_{I_b}\ip h_{I_b}, f, . L^p(\R;X).
  \lesssim \Norm f . L^p(\R;X). ,
\end{equation*}
for any subset $\mathcal{J}$ of the dyadic intervals.
\end{lemma}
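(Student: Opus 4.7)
My plan is to derive both estimates from the standard equivalence between the UMD property of $X$ and the unconditionality of the dyadic Haar system in $L^p(\R;X)$: for any subcollection $\mathcal{K}$ of dyadic intervals, the Haar projection $P_\mathcal{K} f := \sum_{K\in\mathcal{K}} \langle h_K, f\rangle h_K$ is bounded on $L^p(\R;X)$ with a constant depending only on $p$ and the UMD constant of $X$, and more generally so are all Haar multipliers $f\mapsto \sum_K \alpha_K \langle h_K, f\rangle h_K$ with $|\alpha_K|\le 1$.

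For the second inequality, observe that for each fixed $b\in\{0,\ell,r\}$ the assignment $I\mapsto I_b$ is injective on dyadic intervals, so $\{h_{I_b}: I\in\mathcal{J}\}$ is a subfamily of the Haar system. The operator $f\mapsto \sum_{I\in\mathcal{J}} h_{I_b}\langle h_{I_b},f\rangle$ is then a Haar projection and the bound is immediate.

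For the first inequality, set $g:= \sum_{I\in\mathcal{J}} h_{I_b}\langle h_{I_b}, f\rangle$. Orthogonality of Haar functions yields $\langle h_{I_b}, g\rangle =\langle h_{I_b}, f\rangle$ for every $I\in\mathcal{J}$, so it suffices to bound the ``shift''
$$ g\ \longmapsto\ \sum_{I\in\mathcal{J}} h_{I_a}\langle h_{I_b}, g\rangle $$
on $L^p(\R;X)$. The case $a=b$ is trivial. In all other cases $I_a$ and $I_b$ are dyadic intervals contained in $I$ with lengths in ratio lying in $\{\tfrac12,1,2\}$, so the operator is a dyadic Haar shift of complexity at most one, whose $L^p(\R;X)$-boundedness is a well-known consequence of UMD. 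A self-contained argument is available: group $\mathcal{J}$ by scale, realize the shift at each fixed scale as the composition of a Haar projection with a measure-preserving piecewise reflection of $\R$ that swaps the relevant halves of each $I$ at that scale (an $L^p$-isometry because it is measure-preserving), and reassemble the scales via the unconditionality of the Haar basis. The main obstacle is this reassembly step: because $\mathcal{J}$ may contain nested intervals, no single global involution implements the shift, so UMD must be invoked once at each scale to rearrange Haar coefficients and then once more to sum the scale-by-scale contributions in $L^p(\R;X)$.
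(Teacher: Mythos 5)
Your treatment of the second inequality is correct and matches the paper: for fixed $b$ the map $I\mapsto I_b$ is injective, so the operator is a Haar projection and UMD-unconditionality gives the bound. The problems are in your ``self-contained argument'' for the first inequality, and they are two-fold. First, the measure-preserving reflection device only works when $a,b\in\{\ell,r\}$. If $a=0$ or $b=0$ (and $a\neq b$), the functions $h_{I}$ and $h_{I_\ell}$ have different distributions: $h_{I_\ell}$ takes values $\pm\abs{I_\ell}^{-1/2}$ on a set of measure $\abs{I}/2$ while $h_I$ takes values $\pm\abs{I}^{-1/2}$ on a set of measure $\abs{I}$, so no measure-preserving transformation of $\R$ carries one to the other. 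This is precisely the ``complexity one'' part of the shift, and it is the part your device cannot reach. Second, the ``reassembly'' step is not an argument: uniform boundedness of the fixed-scale pieces $T_k g=\sum_{I\in\mathcal J_k}h_{I_a}\ip h_{I_b},g,$ does not bound $\sum_k T_k$, and since the isometries $\sigma_k$ differ from scale to scale there is no comparison between $\Exp\|\sum_k r_k (\Pi_k g)\circ\sigma_k\|_p^p$ and $\Exp\|\sum_k r_k \Pi_k g\|_p^p$; saying ``UMD must be invoked once more'' does not identify which inequality is being used. Your fallback appeal to the general $L^p(\R;X)$-boundedness of complexity-one Haar shifts is a citation of essentially the statement being proved, which the paper deliberately avoids.

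What actually closes both gaps simultaneously is the paper's argument, which works with all intervals at once and never uses an isometry: randomize the outputs, so that by unconditionality
\begin{equation*}
  \NOrm \sum_{I\in\mathcal{J}} h_{I_a}\ip h_{I_b}, f, . L^p(\R;X).
  \lesssim \Big(\Exp\NOrm \sum_{I\in\mathcal{J}} \eta_I \frac{\ind_{I_a}}{\abs{I_a}^{1/2}}\ip h_{I_b}, f, . L^p(\R;X). ^p\Big)^{1/p};
\end{equation*}
then use the contraction principle to replace $\ind_{I_a}$ by $\ind_I$, observe that $\ind_I/\abs{I_a}^{1/2}$ is a constant multiple of $\Exp_I\bigl(\ind_{I_b}/\abs{I_b}^{1/2}\bigr)$ regardless of whether $a$ or $b$ equals $0$ (this is where the mismatch of normalizations and supports is absorbed), apply Stein's inequality for conditional expectations (valid in UMD spaces) to remove $\Exp_I$, and use unconditionality once more to return to $\sum_I h_{I_b}\ip h_{I_b},f,$. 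Stein's inequality is the ingredient your sketch is missing; without it, neither the $a=0,b\in\{\ell,r\}$ case nor the cross-scale summation goes through.
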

%%%%%%%%%%%%%%%%%%%%%%%%%%%%%% LEMMA  LEMMA LEMMA
%%%%%%%%%%%%%%%%%%%%%%%%%%%%%% LEMMA  LEMMA LEMMA

%%%%%%%%%%%%%%%%%%%%%%%%%%%%%% PROOF  PROOF PROOF
%%%%%%%%%%%%%%%%%%%%%%%%%%%%%% PROOF  PROOF PROOF
\begin{proof}
Observe the pointwise identities $|h_I |=1_I/|I|^{1/2}$ as well as $\Exp_I 1_{I_a}=\tfrac12 1_I$ for $a\in\{\ell,r\}$, and $\Exp_I 1_I=1_I$, where $\Exp_I f:=1_I\fint_I f$ is the conditional expectation on $I$. Let $\eta_I$ stand for independent random signs indexed by the intervals $I\in\mathcal{J}$. By the unconditionality of the Haar functions, the contraction principle, Stein's inequality for the conditional expectations, and another use of the unconditionality, we have
\begin{equation*}
\begin{split}
  \NOrm \sum_{I\in\mathcal{J}} h_{I_a}\ip h_{I_b}, f, . L^p(\R;X).
  &\lesssim \Big(\Exp\NOrm \sum_{I\in\mathcal{J}} \eta_I \frac{1_{I_a}}{\abs{I_a}^{1/2}}\ip h_{I_b}, f, . L^p(\R;X). ^p\Big)^{1/p} \\
  &\leq \Big(\Exp\NOrm \sum_{I\in\mathcal{J}} \eta_I \frac{1_{I}}{\abs{I_a}^{1/2}}\ip h_{I_b}, f, . L^p(\R;X). ^p\Big)^{1/p} \\
  &\lesssim \Big(\Exp\NOrm \sum_{I\in\mathcal{J}} \eta_I \Exp_I \frac{ 1_{I_b}}{\abs{I_b}^{1/2}}\ip h_{I_b}, f, . L^p(\R;X). ^p\Big)^{1/p} \\
  &\lesssim \Big(\Exp\NOrm \sum_{I\in\mathcal{J}} \eta_I \frac{ 1_{I_b}}{\abs{I_b}^{1/2}}\ip h_{I_b}, f, . L^p(\R;X). ^p\Big)^{1/p} \\
  &\lesssim \NOrm \sum_{I\in\mathcal{J}}  h_{I_b}\ip h_{I_b}, f, . L^p(\R;X).  .
\end{split}
\end{equation*}
Note that both the unconditionality estimates and Stein's inequality relied on the UMD property of $X$.
The second estimate in the statement of the Lemma is a direct application of the unconditionality of the Haar functions.
\end{proof}
%%%%%%%%%%%%%%%%%%%%%%%%%%%%%% PROOF  PROOF PROOF
%%%%%%%%%%%%%%%%%%%%%%%%%%%%%% PROOF  PROOF PROOF

%%%%%%%%%%%%%%%%%%%%%%%%%%%%%% LEMMA  LEMMA LEMMA
%%%%%%%%%%%%%%%%%%%%%%%%%%%%%% LEMMA  LEMMA LEMMA
\begin{lemma}\label{lem:shiftBMO}
For an interval $I$, let $I_\ell$ and $I_r$ be its left and right halves, and $I_0:=I$. For any choice of $a,b\in\{0,\ell,r\}$, we have the estimate
\begin{equation*}
  \NOrm \sum_{I\in\mathcal{J}} h_{I_a}\ip h_{I_b}, f, . \operatorname{BMO}(\R;X).
  \lesssim \Norm f . L^\infty(\R;X). ,
\end{equation*}
for any subset $\mathcal{J}$ of the dyadic intervals.
\end{lemma}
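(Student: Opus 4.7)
The plan is to verify the dyadic BMO bound directly. Writing $g\eqdef\sum_{I\in\mathcal{J}}h_{I_a}\ip h_{I_b}, f, $, I need to establish, for each dyadic $J$, the estimate $\fint_J\Norm g(x)-\langle g\rangle_J. X. dx\lesssim\Norm f. L^\infty(\R;X). $, after which the supremum over $J$ delivers the BMO norm. Fix such a $J$ and split $g=g_{\mathrm{in}}+g_{\mathrm{out}}$, where $g_{\mathrm{in}}$ collects the indices $I\in\mathcal{J}$ with $I\subseteq J$ and $g_{\mathrm{out}}$ collects the rest.

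For the inside part, every $h_{I_b}$ with $I\subseteq J$ is supported in $J$, so $\ip h_{I_b}, f, =\ip h_{I_b}, f\ind_J, $. Lemma \ref{lem:HaarShift} at $p=2$ then gives $\Norm g_{\mathrm{in}}. L^2(\R;X). \lesssim\Norm f\ind_J. L^2(\R;X). \leq\abs{J}^{1/2}\Norm f. L^\infty(\R;X). $. Since $g_{\mathrm{in}}$ is supported in $J$, Cauchy--Schwarz yields
\[
\fint_J\Norm g_{\mathrm{in}}(x). X. dx \leq\abs{J}^{-1/2}\Norm g_{\mathrm{in}}. L^2(\R;X). \lesssim\Norm f. L^\infty(\R;X). ,
\]
which is more than enough.

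The main point is to understand $g_{\mathrm{out}}$ on $J$. Every summand with $I\cap J=\emptyset$ vanishes on $J$, so only $I\supsetneq J$ can contribute. A short case analysis on the dyadic positions of $I_a$ and $J$, using $\abs{h_{I_a}}=\abs{I_a}^{-1/2}\ind_{I_a}$ and the fact that $\abs{I_a}\in\{\abs I,\abs I/2\}$ is always $\geq\abs J$ when $J\subsetneq I$, shows that the restriction $h_{I_a}\big|_J$ is constant unless $I_a=J$; the latter is possible only for $I=J^{(1)}$ and $a\in\{\ell,r\}$ matching the position of $J$ inside $J^{(1)}$. All constant-on-$J$ contributions cancel in $g_{\mathrm{out}}-\langle g_{\mathrm{out}}\rangle_J$, leaving at most one term of the form $h_J\ip h_{(J^{(1)})_b}, f, $ on $J$. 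Pointwise on $J$ this term has $X$-norm at most $\abs{J}^{-1/2}\Norm h_{(J^{(1)})_b}. L^1. \Norm f. L^\infty(\R;X). \lesssim\Norm f. L^\infty(\R;X). $, which yields the desired control of $\fint_J\Norm g_{\mathrm{out}}(x)-\langle g_{\mathrm{out}}\rangle_J. X. dx$.

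The only mildly delicate ingredient is this geometric classification of $h_{I_a}\big|_J$; once it is in place, combining the two bounds and taking the supremum over dyadic $J$ finishes the proof. The UMD property of $X$ enters only indirectly, through the $L^2$ estimate provided by Lemma \ref{lem:HaarShift}.
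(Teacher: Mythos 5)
Your argument is correct and is essentially the paper's proof: both reduce the oscillation over $J$ to the terms with $I_a\subseteq J$ and invoke Lemma \ref{lem:HaarShift} together with $\Norm \ind_{J^{(1)}}f.L^p(\R;X).\lesssim\abs{J}^{1/p}\Norm f.L^\infty(\R;X).$. The only cosmetic difference is that you treat the single boundary term $I=J^{(1)}$ by a direct pointwise bound, whereas the paper absorbs it into the collection $\{I:I_a\subseteq J\}$ fed to Lemma \ref{lem:HaarShift}; also, you bound the $L^1$-oscillation while the paper records the $L^p$-oscillation for all $p$, but these are equivalent by John--Nirenberg.
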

%%%%%%%%%%%%%%%%%%%%%%%%%%%%%% LEMMA  LEMMA LEMMA
%%%%%%%%%%%%%%%%%%%%%%%%%%%%%% LEMMA  LEMMA LEMMA

%%%%%%%%%%%%%%%%%%%%%%%%%%%%%% PROOF  PROOF PROOF
%%%%%%%%%%%%%%%%%%%%%%%%%%%%%% PROOF  PROOF PROOF
\begin{proof}
Let $J$ be any dyadic interval. Then
\begin{equation*}
  (\ind_J-\Exp_J)\sum_{I\in\mathcal{J}} h_{I_a}\ip h_{I_b}, f,
  =\sum_{\substack{I\in\mathcal{J}\\ I_a\subseteq J}} h_{I_a}\ip h_{I_b}, f, 
  =\sum_{\substack{I\in\mathcal{J}\\ I_a\subseteq J}} h_{I_a}\ip h_{I_b}, 1_{J^{(1)}} f, .
\end{equation*}
By the previous lemma, applied to $\{I\in\mathcal{J}:I_a\subseteq J\}$ in place of $\mathcal{J}$, we get
\begin{equation*}
  \NOrm (\ind_J-\Exp_J)\sum_{I\in\mathcal{J}} h_{I_a}\ip h_{I_b}, f, . L^p(\R;X).
  \lesssim\Norm 1_{J^{(1)}} f . L^p(\R;X).
  \lesssim \abs{J}^{1/p}\Norm f . L^\infty(\R;X). .
\end{equation*}
This proves the lemma.
\end{proof}
%%%%%%%%%%%%%%%%%%%%%%%%%%%%%% PROOF  PROOF PROOF
%%%%%%%%%%%%%%%%%%%%%%%%%%%%%% PROOF  PROOF PROOF

%%%%%%%%%%%%%%%%%%%%%%%%%%%%%% REMARK  REMARK REMARK
%%%%%%%%%%%%%%%%%%%%%%%%%%%%%% REMARK  REMARK REMARK
\begin{remark}\label{rem:WalshVsHaar}
Observe that
\begin{equation*}
  w_{I,1}=h_I,\qquad w_{I,2}=\frac{h_{I_\ell}+h_{I_r}}{\sqrt{2}},\qquad w_{I,3}=\frac{h_{I_\ell}-h_{I_r}}{\sqrt{2}}.
\end{equation*}
Hence expressions of the form
\begin{equation*}
  \sum_{I\in\mathcal{J}}w_{I,m}\ip w_{I,m}, f, ,\qquad m\in\{1,2,3\},
\end{equation*}
are finite linear combinations of the expressions
\begin{equation*}
  \sum_{I\in\mathcal{J}}h_{I_a}\ip h_{I_b}, f,
\end{equation*}
considered in the above two Lemmas.
\end{remark}
%%%%%%%%%%%%%%%%%%%%%%%%%%%%%% REMARK  REMARK REMARK
%%%%%%%%%%%%%%%%%%%%%%%%%%%%%% REMARK  REMARK REMARK

%%%%%%%%%%%%%%%%%%%%%%%%%%%%%% SECTION  SECTION SECTION
%%%%%%%%%%%%%%%%%%%%%%%%%%%%%% SECTION  SECTION SECTION
\section{The quartile type of a UMD Banach space} \label{s.quartileType}
%%%%%%%%%%%%%%%%%%%%%%%%%%%%%% DEFINITION  DEFINITION DEFINITION:quartile type
%%%%%%%%%%%%%%%%%%%%%%%%%%%%%% DEFINITION  DEFINITION DEFINITION
Let $v,u\in\{0,1,2,3\}$ with $u\neq v$. Let $\TT$ be a collection consisting of pairwise disjoint trees. We say that $\TT$ is $(v,u)$-good if every $\T\in\TT$ is a $v$-tree and  for any $\T,\T'\in\TT$ and $P\in\T$, $P'\in\T'$ we have that $P_u\cap P' _u=\emptyset$. When we say a property holds for all $(v,u)$ good collections we will always mean that $(v,u)$ takes all possible values in $\{0,1,2,3\}^2\setminus\{u=v\}$.

We say that a Banach space $X$ has \emph{quartile-type} $q$ if the following estimate holds uniformly for every $(v,u)$-good collection $\TT$ and every $f\in L^q(\RR;X)$:
\begin{align*}
\bigg(\sum_{\T\in\TT} \NOrm \sum_{P\in \T} \ip f,w_{P_u}, w_{P_u}.L^q(\RR;X). ^ q \bigg)^\frac{1}{q} \lesssim \norm f . L^q(\RR;X). .
\end{align*}
%%%%%%%%%%%%%%%%%%%%%%%%%%%%%% DEFINITION  DEFINITION DEFINITION
%%%%%%%%%%%%%%%%%%%%%%%%%%%%%% DEFINITION  DEFINITION DEFINITION

%%%%%%%%%%%%%%%%%%%%%%%%%%%%%% PROPOSITION  PROPOSITION  PROPOSITION:quartile type
%%%%%%%%%%%%%%%%%%%%%%%%%%%%%% PROPOSITION  PROPOSITION  PROPOSITION
\begin{proposition}\label{p.quartiletype} A necessary condition for quartile-type $q$ is that $X$ is a UMD space and $q\geq 2$. If a UMD space has quartile-type $q$, it has quartile-type $p$ for all $p\in[q,\infty)$. Every Hilbert space has quartile-type $2$, and every complex interpolation space $[X,H]_\theta$, $\theta\in(0,1)$, between a UMD space $X$ and a Hilbert space $H$ has quartile-type $2/\theta$.
\end{proposition}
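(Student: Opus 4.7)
\emph{Necessity of the conditions.} To see that $q\geq 2$ is needed, I would test the quartile-type $q$ inequality against a scalar $f$ on a common dyadic interval $I$. Choose $J$ disjoint tiles $P^j_u\subset I\times\RR$ whose wave packets $w_{P^j_u}$ reduce to products of a common Rademacher factor with a varying high-index Rademacher (which is possible by selecting the frequency labels $4n^j+u$ of the form $2^{k_j}+2^0$), so that $\{\{P^j\}\}_{j=1}^J$ is a $(v,u)$-good collection of singleton trees. For $f=\sum_j w_{P^j_u}$, Khintchine's inequality then gives $\Norm f.L^q.\asymp \sqrt{J}\cdot|I|^{1/q-1/2}$ while the left-hand side of the quartile-type $q$ inequality equals $J^{1/q}\cdot|I|^{1/q-1/2}$; letting $J\to\infty$ forces $q\geq 2$. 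For the UMD necessity, I specialise the inequality to a single convex $v$-tree $\T$ of all quartiles below a fixed top $T$: by Lemma \ref{l.haarshift} combined with Remark \ref{rem:WalshVsHaar}, the sum $\sum_{P\in\T}\ip f,w_{P_u},w_{P_u}$ equals $w_{T_v}^\infty$ times an arbitrary Haar projection of $f\cdot w_{T_v}^\infty$ onto dyadic subintervals of $I_T$. Unimodularity of $w_{T_v}^\infty$ on $I_T$ then propagates the quartile-type $q$ bound into uniform $L^q(X)$ boundedness of such Haar projections, a property equivalent to UMD.

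\emph{Hilbert endpoint.} For a Hilbert space $H$, fact (i) of Section \ref{s.quartiles} together with the definition of $(v,u)$-good make $\{w_{P_u}\}_{P\in\bigcup_{\T\in\TT}\T}$ an $L^2$-orthonormal system. Bessel's inequality applied to $f\in L^2(\RR;H)$ gives
\begin{equation*}
\sum_{\T\in\TT}\NOrm \sum_{P\in\T}\ip f,w_{P_u},w_{P_u}.L^2(\RR;H).^2 = \sum_{P}\Norm \ip f,w_{P_u},.H.^2 \leq \Norm f.L^2(\RR;H).^2,
\end{equation*}
which is precisely the quartile-type $2$ estimate for $H$.

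\emph{BMO endpoint, monotonicity, and interpolation.} Both remaining claims rest on the uniform-in-$\TT$ endpoint estimate $\Norm g_\T.\operatorname{BMO}(\RR;X).\lesssim \Norm f.L^\infty(\RR;X).$ for $g_\T:=\sum_{P\in\T}\ip f,w_{P_u},w_{P_u}$ and every UMD $X$. To establish it, split each tree as $\T=\bigcup_{v=0}^3\T_v$ via fact (ii), apply Lemma \ref{l.haarshift} and Remark \ref{rem:WalshVsHaar} to express the $v$-tree contribution as $w_{T_v}^\infty$ times a finite linear combination of Haar-shift expressions applied to $f\cdot w_{T_v}^\infty$, and invoke Lemma \ref{lem:shiftBMO}. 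This gives boundedness of the linear operator $Tf:=(g_\T)_{\T\in\TT}$ from $L^\infty(\RR;X)$ into $\ell^\infty(\TT;\operatorname{BMO}(\RR;X))$. Claim (2) then follows by complex interpolation between this endpoint and the given quartile-type $q$ bound $T:L^q(\RR;X)\to\ell^q(\TT;L^q(\RR;X))$ at parameter $\theta=q/p\in(0,1)$, using the standard identities $[L^\infty(X),L^q(X)]_\theta=L^p(X)$ and $[\operatorname{BMO}(X),L^q(X)]_\theta=L^p(X)$ together with the corresponding $\ell^p$-interpolation. Claim (4) is obtained in exactly the same way, now interpolating the $L^\infty\to\operatorname{BMO}$ endpoint against the Hilbert endpoint $T:L^2(\RR;H)\to\ell^2(\TT;L^2(\RR;H))$ of the previous paragraph and invoking $[L^\infty(X),L^2(H)]_\theta=L^{2/\theta}([X,H]_\theta)$ and $[\operatorname{BMO}(X),L^2(H)]_\theta=L^{2/\theta}([X,H]_\theta)$, yielding quartile-type $2/\theta$ for $[X,H]_\theta$. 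The main obstacle will be the BMO endpoint itself, specifically verifying that multiplication by the Walsh function $w_{T_v}^\infty$ preserves the BMO estimate furnished by Lemma \ref{lem:shiftBMO}; this uses that $w_{T_v}^\infty$ is constant on a fixed fine dyadic scale determined by $T$, so that the oscillation of $w_{T_v}^\infty\cdot h$ on any dyadic interval below that scale reduces to the oscillation of $h$, while coarser dyadic intervals are handled by a further splitting into the finer ones.
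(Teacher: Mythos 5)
Your outline reproduces the paper's proof in all of its main components: singleton trees give the necessity of $q\geq 2$ (your Khintchine computation is a concrete instance of what the paper calls ``elementary scalar-valued considerations''), a single tree together with Lemma \ref{l.haarshift} gives the UMD necessity, Bessel's inequality gives the Hilbert endpoint, and the remaining claims come from complex interpolation of the $L^\infty\to\ell^\infty(\operatorname{BMO})$ endpoint of Lemma \ref{lem:shiftBMO} against either the Hilbert $L^2$ bound or the assumed quartile-type $q$ bound. (A minor point: every tree in a $(v,u)$-good family is by definition already a $v$-tree, so the splitting $\T=\bigcup_{v}\T_v$ you invoke is unnecessary.)

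The one step that would fail as written is your resolution of the ``main obstacle.'' You insist on the estimate $\Norm g_\T.\operatorname{BMO}(\RR;X).\lesssim\Norm f.L^\infty(\RR;X).$ for $g_\T=\sum_{P\in\T}\ip f,w_{P_u},w_{P_u}$ itself, which forces you to show that multiplication by $w_{T_v}^\infty$ preserves dyadic BMO. Your proposed argument for dyadic intervals $J$ coarser than the constancy scale of $w_{T_v}^\infty$ --- split $J$ into the finer subintervals --- does not control BMO oscillation, since $\frac{1}{|J|}\int_J\abs{F-\langle F\rangle_J}$ is not dominated by the oscillations over the pieces (the local means differ; already $F=\ind_{J_l}-\ind_{J_r}$ has zero oscillation on each half and oscillation $1$ on $J$). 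More seriously, writing $g_\T=w_{T_v}^\infty\tilde g_\T$ with $\tilde g_\T=\sum_{P\in\T}\ip f w_{T_v}^\infty,w_{I_P,m},w_{I_P,m}$, on such a coarse $J$ the decomposition $(\ind_J-\Exp_J)(w_{T_v}^\infty\tilde g_\T)=(\ind_J-\Exp_J)\bigl(w_{T_v}^\infty(\tilde g_\T-\langle\tilde g_\T\rangle_J)\bigr)+(\ind_J-\Exp_J)(w_{T_v}^\infty)\cdot\langle\tilde g_\T\rangle_J$ requires $\abs{\langle\tilde g_\T\rangle_J}\lesssim\Norm f.L^\infty(\RR;X).$; but $\Exp_J\tilde g_\T$ is a partial sum of a martingale transform of $fw_{T_v}^\infty$ and is not pointwise bounded by $\Norm f.L^\infty(\RR;X).$, so this route breaks down. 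The paper avoids the issue entirely: it never claims a BMO bound for $g_\T$, only for the twisted quantity $\tilde g_\T$, to which Lemma \ref{lem:shiftBMO} applies verbatim; one interpolates the linear operator $f\mapsto(\tilde g_\T)_{\T\in\TT}$ between its two endpoints and only at the very end uses the pointwise identity $\abs{\tilde g_\T}=\abs{g_\T}$ (valid since $\abs{w_{T_v}^\infty}=1$ on $I_T\supseteq\operatorname{supp}g_\T$) to convert the resulting $\ell^p(L^p)$ bound into the quartile-type inequality. With that substitution your argument becomes the paper's.
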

%%%%%%%%%%%%%%%%%%%%%%%%%%%%%% PROPOSITION  PROPOSITION  PROPOSITION
%%%%%%%%%%%%%%%%%%%%%%%%%%%%%% PROPOSITION  PROPOSITION  PROPOSITION

%%%%%%%%%%%%%%%%%%%%%%%%%%%%%% PROOF  PROOF PROOF
%%%%%%%%%%%%%%%%%%%%%%%%%%%%%% PROOF  PROOF PROOF
\begin{proof} The proof is very similar the proof of \cite[Proposition 3.1]{12020209}. For the necessity of UMD, it is enough to consider only families $\TT$ consisting of a single tree, and observe that one obtains the boundedness of Haar martingale transforms from the quartile type inequality. For the necessity of $q \geq 2$, it is enough to consider only families $\TT$, every tree in which consists of just a single quartile, and the impossibility of $q<2$ follows by elementary scalar-valued considerations.

For the main implication concerning the quartile type of interpolation spaces,
consider any $(v,u)$-good collection $\TT$. We define the operators:
\begin{equation*}
\begin{split}
  \mathcal W_{\TT}f &\eqdef \bigg\{\sum_{\P\in\T} \ip f , w_{P_u},w_{P_u} \bigg\}_{\T\in\TT},\\
  \tilde{\mathcal W}_{\TT}f &\eqdef \bigg\{w_{T_v}^\infty\sum_{\P\in\T} \ip f , w_{P_u},w_{P_u} \bigg\}_{\T\in\TT}
  =\bigg\{\sum_{\P\in\T} \ip f \cdot w_{T_v}^\infty, w_{I_P,m},w_{I_P,m} \bigg\}_{\T\in\TT},
\end{split}
\end{equation*}
where the last identity is due to Lemma~\ref{l.haarshift}.
Observe that since the collection $\TT$ is $(v,u)$-good, all the $\omega_{P_u}$ appearing in the definition of $\mathcal W_\TT$ are disjoint,  and hence the corresponding wave packets $w_{P_u}$ are pairwise orthogonal. If $H$ is a Hilbert space we thus get
\begin{align*}
  \norm \tilde{\mathcal W}_\TT f. \ell^2(\TT;L^2(\RR;H)). =\norm \mathcal W_\TT f. \ell^2(\TT;L^2(\RR;H)). \leq \norm f. L^2(\RR;H). .
\end{align*}
Now fix a UMD Banach space $X$ and a $\T\in\TT$.  By Lemma~\ref{lem:shiftBMO} and Remark~\ref{rem:WalshVsHaar}, we have
\begin{equation*}
  \Big\| \sum_{\P\in\T} \ip f \cdot w_{T_v}^\infty, w_{I_P,m},w_{I_P,m}\Big\|_{\operatorname{BMO}(\R_+;X)}
  \lesssim\| f \cdot w_{T_v}^\infty\|_{L^\infty(\R_+;X)}=\|f\|_{L^\infty(\R_+;X)},
\end{equation*}
and hence
\begin{equation*}
  \norm \tilde{\mathcal W}_\TT f. \ell^\infty(\TT;\operatorname{BMO}(\RR;X)).  \lesssim \norm f. L^\infty(\RR;X). .
\end{equation*}
If $Y=[X,H]_\theta$ for some $\theta\in(0,1)$, interpolation implies that
\begin{align*}
\norm \mathcal W_\TT f.\ell^p(\TT;L^p(\RR;Y)) . = \norm \tilde{\mathcal W}_\TT f.\ell^p(\TT;L^p(\RR;Y)) . \lesssim \norm f.L^p(\RR;X). .
\end{align*}
The last estimate holds for any $(v,u)$-good collection $\TT$; thus $Y$ is of quartile type $2/\theta$ as we wanted to prove.

The fact that quartile type $q$ implies quartile type $p\in(q,\infty)$ is proven by a similar interpolation argument, where the quartile type $q$ assumption replaces the Hilbert space $L^2$ estimate as one end of the interpolation.
\end{proof}
%%%%%%%%%%%%%%%%%%%%%%%%%%%%%% PROOF  PROOF PROOF
%%%%%%%%%%%%%%%%%%%%%%%%%%%%%% PROOF  PROOF PROOF

%%%%%%%%%%%%%%%%%%%%%%%%%%%%%% SECTION  SECTION SECTION
%%%%%%%%%%%%%%%%%%%%%%%%%%%%%% SECTION  SECTION SECTION
\section{The quartile operator} \label{s.quartileOperator}
%%%%%%%%%%%%%%%%%%%%%%%%%%%%%% DEFINITION  DEFINITION DEFINITION:quartile operator and trilinear form
%%%%%%%%%%%%%%%%%%%%%%%%%%%%%% DEFINITION  DEFINITION DEFINITION
We fix three Banach spaces $X_1, X_2, X_3$ and assume throughout the exposition that there exists a trilinear form
\begin{align*}
\Pi \;:\; X_1\times X_2\times X_3\to \C,
\end{align*}
which is %also positive homogeneous in each entry. The basic assumption for $\Pi$ is that it is
bounded in the sense that
\begin{align*}
|\Pi(x_1,x_2,x_3)|\lesssim \|x\|_{X_1} \|x_2\|_{X_2} \|x_3\|_{X_3}.
\end{align*}
We will just write $x_1\cdot x_2 \cdot x_3$ in the place of $\Pi(x_1,x_2,x_3)$. The form $\Pi$ also induces the  bounded bilinear forms
\begin{align*}
&\Pi_1:X_2\times X_3 \to X_1 ^*,\quad \Pi_1(x_2,x_3)\eqdef \Pi(\cdot,x_2,x_3),\\
&\Pi_2:X_1\times X_3 \to X_2 ^*,\quad \Pi_2(x_1,x_3)\eqdef \Pi(x_1,\cdot,x_3),\\
&\Pi_3:X_1\times X_2 \to X_3 ^*,\quad \Pi_3(x_1,x_2)\eqdef \Pi(x_1,x_2,\cdot).
\end{align*}
We will also use the notation $x_1\cdot x_2$, say, with the obvious meaning. 

The quartile operator is the bilinear operator
\begin{align*}
B(f_1,f_2)(x)\eqdef \sum_{P\mbox{ \tiny quartile}}  \frac{1}{|I_P|^\frac{1}{2}}\ip f_1,w_{P_1},  \cdot \ip f_2,w_{P_2}, w_{P_3}(x),
\end{align*}
where $f_1\in L^{p_1}(\RR;X_1), f_2\in L^{p_2}(\RR;X_2)$. The associated trilinear form is 
\begin{align*}
\Lambda(f_1,f_2,f_3)\eqdef \sum_{P\mbox{ \tiny quartile}} \frac{1}{|I_P|^\frac{1}{2}}\ip f_1,w_{P_1}, \cdot  \ip f_2,w_{P_2}, \cdot \ip f_3,w_{P_3}, ,
\end{align*}
with $f_3\in L^{p_3}(\RR;X)$.

It is again associated with three bilinear forms via
\begin{equation*}
  \Lambda(f_1,f_2,f_3)
  =\langle f_1, B_1(f_2,f_3)\rangle
  =\langle f_2, B_2(f_1,f_3)\rangle
  =\langle B_3(f_1,f_2),f_3\rangle,
\end{equation*}
where $B_3=B$ is the original quartile operator.
%%%%%%%%%%%%%%%%%%%%%%%%%%%%%% DEFINITION  DEFINITION DEFINITION
%%%%%%%%%%%%%%%%%%%%%%%%%%%%%% DEFINITION  DEFINITION DEFINITION

Some remarks are in order. The sums defining the bilinear quartile operator and the associated trilinear form extend through all the quartiles of the time-frequency plane. We make sense of these operators by initially considering them acting on \emph{finite dyadic step functions}, i.e., compactly supported functions on the real line which are constant on dyadic intervals of length $2^{-N}$ for some integer $N$.

\begin{lemma}
If all $f_i$ are finite dyadic step functions, then the series defining $\Lambda(f_1,f_2,f_3)$ converges absolutely.
\end{lemma}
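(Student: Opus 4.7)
The plan is to pick $M, N \in \N$ such that every $f_v$ is supported in $[0, 2^M)$ and constant on dyadic intervals of length $2^{-N}$ (by taking the maximum of the individual parameters of the three $f_v$'s), and then to estimate
\begin{equation*}
\sum_P \frac{1}{\abs{I_P}^{1/2}} \prod_{v=1}^3 \norm \ip f_v, w_{P_v}, . X_v . ,
\end{equation*}
which by the boundedness of $\Pi$ dominates the absolute value of the original series term by term. I would split the contributing quartiles into three regimes according to the size of $\abs{I_P}$.

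For $\abs{I_P} \leq 2^{-N}$, the interval $I_P$ lies inside a single constancy cell of each $f_v$, while for $v \in \{1,2,3\}$ the wave packet $w_{P_v}$ has mean zero on $I_P$ (since $n_v = 4n+v \ge 1$, so $w_{n_v}$ averages to zero on $[0,1)$). Hence $\ip f_v, w_{P_v}, = 0$ and the term vanishes. For $2^{-N} < \abs{I_P} = 2^k \leq 2^M$, the support condition forces $I_P \subseteq [0, 2^M)$ (as dyadic intervals of length $\leq 2^M$ that meet $[0,2^M)$ must sit inside it), and the Walsh expansion of $f_v|_{I_P}$ involves only frequencies $n < 2^{k+N}$, so $\ip f_v, w_{P_v}, \neq 0$ forces $n_v < 2^{k+N}$. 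A routine count yields at most $(M+N) \cdot 2^{M+N-2}$ contributing quartiles in this regime, which is finite.

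The main obstacle is the regime $\abs{I_P} > 2^M$, where the support condition forces $I_P = [0, 2^K)$ with $K > M$, leaving infinitely many candidate scales $K$. I would handle it by expanding each $f_v = \sum_{n=0}^{2^{M+N}-1} c_n^{(v)}\, w_{[0, 2^M), n}$ in the Walsh basis of the support, and computing directly (using $w_a w_b = w_{a \oplus b}$ together with the evaluation $\int_0^{2^{M-K}} w_p = 2^{M-K} \ind_{\{p < 2^{K-M}\}}$) that
\begin{equation*}
\ip f_v, w_{P_v}, = 2^{-(K-M)/2}\, c^{(v)}_{\lfloor n_v / 2^{K-M} \rfloor} ,
\end{equation*}
which vanishes unless $n_v < 2^{K+N}$. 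The uniform bound $\norm c_n^{(v)} . X_v . \lesssim 2^{M/2} \norm f_v . L^\infty(\RR; X_v) .$, combined with at most $2^{K+N-2}$ contributing quartiles at each scale $K$, bounds the long-scale tail by a multiple of $\sum_{K > M} 2^{-K}$, which converges. The decisive point is that the cube arising from the trilinear form produces decay $2^{-3(K-M)/2}$ at each scale, which overpowers both the $2^{K+N}$ growth of the frequency count and the $2^{-K/2}$ normalisation factor, leaving a summable geometric tail.
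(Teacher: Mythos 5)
Your argument is correct and rests on the same two facts as the paper's own proof: the mean-zero property of the wave packets $w_{P_v}$, $v\geq 1$, kills all sufficiently small scales, while at each large scale the number of contributing frequencies grows only linearly in $\abs{I_P}$ and each of the three inner products decays like $\abs{I_P}^{-1/2}$, so that each scale contributes $O(\abs{I_P}^{-1})$ and the tail over the nested large intervals is a convergent geometric series. The only difference is bookkeeping: the paper first reduces to $f_i=\ind_{J_i}$ by linearity, which collapses your three scale regimes and the explicit Walsh expansion of the large-scale regime into a single short computation.
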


%\todo[inline]{However, see proof, it is not true that only finitely many terms would be nonzero!}

%\todo[inline] {One only ever sums over a finite set of convex tiles. Bounds are independent 
%of the actual size of the tiles, and the argument shows that the sum is unconditionally convergent in the sum over tiles. From this, 
%it follows that the infinite sum must converge unconditionally. --mtl}  

\begin{proof}
By linearity, we may assume that $f_i=\ind_{J_i}$ for some dyadic interval $J_i$. Let $P=I\times\abs{I}^{-1}[4n,4(n+1))$, so that $w_{P_i}=w_{I,4n+i}$, and
\begin{equation*}
  \ip f_i, w_{P_i},
  =\frac{1}{|I|^{1/2}}\int_{I\cap J_i}w_{4n+i}\Big(\frac{x}{\abs{I}}\Big)dx
\end{equation*}
is nonzero only if $I\supsetneq J_i$, since $w_{4n+i}(\cdot/\abs{I})$ has mean zero on $I$. If $I=J_i^{(k)}$ and $4n+i=2^k a+b$, $0\leq b<2^k$, then
\begin{equation*}
  \int_{I\cap J_i}w_{4n+i}\Big(\frac{x}{\abs{I}}\Big)dx
  =\int_{J_i}w_{2^k a+b}\Big(\frac{x}{2^k\abs{J_i}}\Big)dx
  =w_b\Big(\frac{c(J_i)}{2^k\abs{J_i}}\Big)\int_{J_i}w_{a}\Big(\frac{x}{\abs{J_i}}\Big)dx,
\end{equation*}
since $w_b(\cdot/2^k\abs{J_i})$ is constant on $J_i$ for $b<2^k$. The last integral is zero unless $a=0$, i.e., unless $4n+i<2^k=\abs{I}/\abs{J_i}$. In summary, we conclude that $\ip f_i, w_{P_i},$, where $P=I\times\abs{I}^{-1}[4n,4(n+1))$, can be nonzero only if $I\supsetneq J_i$ and $4n+i<\abs{I}/\abs{J_i}$, and in this case
\begin{equation*}
  |\ip f_i, w_{P_i},|\leq\|f_i\|_1\|w_{P_i}\|_\infty=\frac{|J_i|}{|I|^{1/2}}.
\end{equation*}
Thus
\begin{equation*}
\begin{split}
  \sum_{P\text{ quartile}}\frac{1}{|I_P|^{1/2}} &|\ip f_1, w_{P_1}, \ip f_2, w_{P_2}, \ip f_3, w_{P_3}, |
  \leq\sum_{\substack{I\in\mathcal{D}\\ I\supset J_1\cup J_2\cup J_3}}\sum_{0\leq n<4^{-1}\abs{I}/\max_i\abs{J_i}}
    \frac{\abs{J_1}\abs{J_2}\abs{J_3}}{\abs{I}^2} \\
   &\leq \frac{\abs{J_1}\abs{J_2}\abs{J_3}}{\max_i\abs{J_i}}\sum_{\substack{I\in\mathcal{D}\\ I\supset J_1\cup J_2\cup J_3}}\frac{1}{\abs{I}} 
      \leq \frac{\abs{J_1}\abs{J_2}\abs{J_3}}{(\max_i\abs{J_i})^2}\leq \min_i\abs{J_i}.
\end{split}
\end{equation*}
This proves the absolute convergence.
\end{proof}

Thus, for finite dyadic step functions $f_i$, $\Lambda(f_1,f_2,f_3)$ is well defined, and it is the unconditional limit of the sums over finite collections $\P$ of quartiles. Henceforth, we concentrate on estimating such an arbitrary finite sum
\begin{equation*}
  \Lambda_{\P}(f_1,f_2,f_3)=\sum_{P\in\P}\frac{1}{|I_P|^\frac{1}{2} }\ip f_1,w_{P_1}, \ip f_2,w_{P_2}, \ip f_3,w_{P_3},.
\end{equation*}
	
This is just a qualitative assumption though since our estimates do not depend on the size of the collection of quartiles we are considering. Furthermore, we will assume that the collection $\P$ is convex. This is done by noting that we actually prove all our estimates for the larger operator
\begin{align*}
|\Lambda_T|(f_1,f_2,f_3) =\sum_{P\in\P}\frac{1}{|I_P|^\frac{1}{2} }\abs{\ip f_1,w_{P_1}, \ip f_2,w_{P_2}, \ip f_3,w_{P_3},}.
\end{align*}
Thus, starting with a finite collection $\P$ we only need to add finitely many quartiles to make the collection convex and our operator only get bigger. We can and will therefore assume throughout the exposition that the quartile operator and the associated trilinear form are defined on a finite, convex, but otherwise arbitrary collection of quartiles $\P$.

%%%%%%%%%%%%%%%%%%%%%%%%%%%%%% DEFINITION  DEFINITION DEFINITION:size of a collection of tiles
%%%%%%%%%%%%%%%%%%%%%%%%%%%%%% DEFINITION  DEFINITION DEFINITION
If $\P$ is a collection of quartiles, $v\in\{1,2,3\}$ and $1<p<\infty$, then the $(v,p)$-size of $\P$ is defined as
\begin{align*}
\size_{v,p}(\P)\eqdef \sup_{\T\subset \P} \bigg(\frac{1}{|I_T|} \int \Abs{\sum_{P\in\T} \ip f_v,w_{P_v},w_{P_v} (x)}^p dx \bigg)^\frac{1}{p},
\end{align*}
where the supremum is taken over all $u$-trees inside $\P$ with $0\leq u\neq v\leq 3$.
%%%%%%%%%%%%%%%%%%%%%%%%%%%%%% DEFINITION  DEFINITION DEFINITION
%%%%%%%%%%%%%%%%%%%%%%%%%%%%%% DEFINITION  DEFINITION DEFINITION
The following lemma shows that the definitions of size for different $p$'s are equivalent up to numerical constants. This is essentially due to the fact that the size is a BMO-norm type of quantity so the John-Nirenberg inequality applies. For $1\leq p<\infty$ we denote by
\begin{align*}
	\norm f.\operatorname{BMO}_p(\RR;X).\eqdef \sup_{I\in\mathcal D} \bigg(\frac{1}{|I|}\int_I \Abs {f(x)- \langle f \rangle _I }^p dx \bigg)^\frac{1}{p},
\end{align*}
the dyadic BMO-norm of $f$ with respect to the exponent $p$. We have:
%%%%%%%%%%%%%%%%%%%%%%%%%%%%%% LEMMA  LEMMA LEMMA
%%%%%%%%%%%%%%%%%%%%%%%%%%%%%% LEMMA  LEMMA LEMMA
\begin{lemma}\label{l.bmosize}Let $\P$ be any collection of tiles. For all $v\in\{1,2,3\}$ and $p,q\in[1,\infty)$ we have
\begin{equation*}
  \size_{v,p}(\P)\eqsim_{p,q}\size_{v,q}(\P).
\end{equation*}
\end{lemma}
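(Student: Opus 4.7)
The plan is to use Jensen's inequality for the easy direction and a dyadic John--Nirenberg self-improvement for the hard direction, after recasting the size as a BMO-type quantity via Lemma~\ref{l.haarshift}. Fix a $u$-tree $\T\subseteq\P$ (with $u\neq v$) and top $T$, and set $\Phi_\T \eqdef \sum_{P\in\T}\ip f_v,w_{P_v}, w_{P_v}$. By Lemma~\ref{l.haarshift} one has $\Phi_\T = w_{T_u}^\infty\cdot\Psi_\T$, where $\Psi_\T \eqdef \sum_{P\in\T}\ip g,w_{I_P,m}, w_{I_P,m}$, $g \eqdef f_v\cdot w_{T_u}^\infty$, and $m\in\{1,2,3\}$ (because $u\neq v$). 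Since $|w_{T_u}^\infty|=\ind_{I_T}$ we have $|\Phi_\T|=|\Psi_\T|$ pointwise; and since each summand has mean zero on $I_P\subseteq I_T$, $\Psi_\T$ is supported in $I_T$ with $\langle\Psi_\T\rangle_{I_T}=0$. The easy direction $\size_{v,p}(\P)\leq \size_{v,q}(\P)$ for $p\leq q$ is then immediate from Jensen's inequality applied to $|\Phi_\T|$ on the probability space $(I_T,|I_T|^{-1}dx)$.

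For the reverse inequality when $p<q$, the target is the dyadic BMO$_p$ bound
\begin{equation*}
\Bigl(\frac{1}{|J|}\int_J\|\Psi_\T-\langle\Psi_\T\rangle_J\|_{X_v}^p\,dx\Bigr)^{1/p}\lesssim \size_{v,p}(\P),\qquad J\subseteq I_T \text{ dyadic.}
\end{equation*}
Let $J^{(1)}$ denote the dyadic parent of $J$ and put $\T_{J^{(1)}}\eqdef\{P\in\T:I_P\subseteq J^{(1)}\}$. The intervals $\omega_{P_u}$ for $P\in\T$ all contain $\omega_{T_u}$ and are therefore nested, and a direct check on scales and congruence classes shows that $\T_{J^{(1)}}$ is itself a $u$-tree inside $\P$, admitting a valid top quartile of time interval $J^{(1)}$; consequently
\begin{equation*}
\Bigl(\frac{1}{|J^{(1)}|}\int_{J^{(1)}}\|\Psi_{\T_{J^{(1)}}}\|_{X_v}^p\,dx\Bigr)^{1/p}\leq \size_{v,p}(\P).
\end{equation*}
For $P\in\T\setminus\T_{J^{(1)}}$ one has $|I_P|\geq 4|J|$, and by Remark~\ref{rem:WalshVsHaar} the function $w_{I_P,m}$ is a fixed linear combination of the Haar functions $h_{(I_P)_a}$, each constant on any dyadic interval of length $\leq|I_P|/4$. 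Hence $w_{I_P,m}$ is constant on $J$, so $(\Psi_\T-\Psi_{\T_{J^{(1)}}})|_J$ is a constant $X_v$-valued function, and subtracting the mean produces
\begin{equation*}
(\Psi_\T-\langle\Psi_\T\rangle_J)|_J = (\Psi_{\T_{J^{(1)}}}-\langle\Psi_{\T_{J^{(1)}}}\rangle_J)|_J.
\end{equation*}
The triangle inequality together with $|J^{(1)}|=2|J|$ then delivers the claimed BMO$_p$ bound.

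Finally, the vector-valued dyadic John--Nirenberg inequality (proven by a standard Calderón--Zygmund stopping-time argument that works for any Banach space) upgrades this BMO$_p$ estimate to the corresponding BMO$_q$ estimate with constant depending only on $p$ and $q$. Taking $J=I_T$ (where $\langle\Psi_\T\rangle_{I_T}=0$) and using $|\Phi_\T|=|\Psi_\T|$ gives $\bigl(|I_T|^{-1}\int|\Phi_\T|^q\,dx\bigr)^{1/q}\lesssim_{p,q}\size_{v,p}(\P)$; the supremum over $\T$ then yields $\size_{v,q}(\P)\lesssim_{p,q}\size_{v,p}(\P)$. The main obstacle is the middle BMO$_p$ step: one must verify that the ``far'' tiles $P\in\T\setminus\T_{J^{(1)}}$ give only a constant contribution on $J$, which is precisely what the Haar/Walsh identification of Remark~\ref{rem:WalshVsHaar} provides, together with the nesting of the frequency coordinates that lets one recognize $\T_{J^{(1)}}$ as an honest $u$-subtree of $\P$.
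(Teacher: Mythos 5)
Your overall strategy coincides with the paper's: rewrite the tree sum as $w_{T_u}^\infty\Psi_\T$ via Lemma~\ref{l.haarshift}, establish a dyadic $\mathrm{BMO}_p$ bound for $\Psi_\T$ in terms of $\size_{v,p}(\P)$, and upgrade to $\mathrm{BMO}_q$ by John--Nirenberg before specializing to $J=I_T$. The easy direction, the observation that the tiles with $I_P\supsetneq J^{(1)}$ contribute only a constant on $J$, and the final assembly are all fine. There is, however, a genuine gap at the one nontrivial point: your assertion that $\T_{J^{(1)}}=\{P\in\T: I_P\subseteq J^{(1)}\}$ ``admits a valid top quartile of time interval $J^{(1)}$'' is false in general. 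The obstruction is precisely the congruence condition from the proof of Lemma~\ref{l.haarshift}: a quartile $T'$ with $I_{T'}=J^{(1)}$ dominating the elements of $\T_{J^{(1)}}$ in the order $\leq_u$ exists only when the binary digits of $n_T$ at the scale of $J^{(1)}$ satisfy $n_k+2n_{k+1}=u$, and there is no reason for this to hold. For instance, with $u=1$, $T=[0,1)\times[20,24)$ (so $n_T=21$) and $P=[0,2^{-4})\times[0,64)$ one has $P\leq_1 T$; taking $J^{(1)}=[0,2^{-3})$ gives $\T_{J^{(1)}}=\{P\}$, and no quartile $T'=J^{(1)}\times\omega'$ has its quarter $\omega'_1$ of length $8$ inside $\omega_{P_1}=[16,32)$, since the two candidates $[16,24)$ and $[24,32)$ occupy positions $2$ and $3$ of $[0,32)$, not position $1$. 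Consequently the inequality $\tfrac{1}{\abs{J^{(1)}}}\int_{J^{(1)}}\abs{\Psi_{\T_{J^{(1)}}}}^p\leq\size_{v,p}(\P)^p$ does not follow from the definition of size as you invoke it.

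The inequality itself is nevertheless true, but it requires the decomposition that the paper uses: let $\T^*(J^{(1)})$ be the \emph{maximal} quartiles of $\T$ with time interval contained in $J^{(1)}$. Because all the frequency intervals $\omega_{P_u}$, $P\in\T$, contain $\omega_{T_u}$ and are therefore nested, time-interval containment implies comparability in $\leq$, so the intervals $I_R$, $R\in\T^*(J^{(1)})$, are pairwise disjoint and every $P\in\T_{J^{(1)}}$ satisfies $P\leq R$ for some such $R$. Each $\{P\in\T:P\leq R\}$ is an honest $u$-tree with top $R\in\P$, so the definition of size applies with normalization $\abs{I_R}$, and summing over the disjoint $I_R\subseteq J^{(1)}$ yields $\int_{J^{(1)}}\abs{\Psi_{\T_{J^{(1)}}}}^p\leq\size_{v,p}(\P)^p\abs{J^{(1)}}$. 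With this repair (which is the actual content of the paper's proof of the lemma), the rest of your argument goes through.
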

%%%%%%%%%%%%%%%%%%%%%%%%%%%%%% LEMMA  LEMMA LEMMA
%%%%%%%%%%%%%%%%%%%%%%%%%%%%%% LEMMA  LEMMA LEMMA

%%%%%%%%%%%%%%%%%%%%%%%%%%%%%% REMARK  REMARK REMARK
%%%%%%%%%%%%%%%%%%%%%%%%%%%%%% REMARK  REMARK REMARK
\begin{remark} In view of Lemma \ref{l.bmosize}, all the sizes $\size_{v,p}(\P)$ are equivalent, up to numerical constants, for all $1\leq p<\infty$. However, it will be more convenient throughout the paper to set
	\begin{align*}
		\size_v(\P)\eqdef \size_{v,q_v}(\P),
	\end{align*}
whenever the underlying space $X_v$ has quartile type $q_v$.
\end{remark}
%%%%%%%%%%%%%%%%%%%%%%%%%%%%%% REMARK  REMARK REMARK
%%%%%%%%%%%%%%%%%%%%%%%%%%%%%% REMARK  REMARK REMARK

%%%%%%%%%%%%%%%%%%%%%%%%%%%%%% PROOF  PROOF PROOF
%%%%%%%%%%%%%%%%%%%%%%%%%%%%%% PROOF  PROOF PROOF
\begin{proof} Let us fix some $v\in\{1,2,3\} $.  By Lemma \ref{l.haarshift} we have that for any $u$-tree $\T$, $u\in\{0,1,2,3\}\setminus\{v\}$, the wave-packet $w_{P_v}$, $P\in\T$, is described as
\begin{align*}
w_{P_v} =\epsilon_{PT} w_{I_P,m} \cdot w_{T_u } ^\infty,
\end{align*}
where $m\in\{1,2,3\}$. Depending on the value of $m$ we have the cases
\begin{align*}
w_{I_P,m} \in\{ h_{I_P},\frac{1}{\sqrt 2}(h_{I_{P_l}}+h_{I_{P_r}}),\frac{1}{\sqrt{2}}(h_{I_{P_l}}-h_{I_{P_r}})\},\quad m=1,2,3,\quad\mbox{correspondingly}.
\end{align*}
These functions were already considered in Remark \ref{rem:WalshVsHaar}. Since we have fixed $v$, the value of $m$ depends only on the type $u$ of the tree that we are considering. Fixing $m$ implicitly fixes the value of $u$ so we will prove the lemma by considering all the possible values of $m$. 

Fix a $u$-tree $\T\subset\P$ and write
\begin{equation*}
  \sum_{P\in\T}\ip f, w_{P_v}, w_{P_v}
  =w^\infty_{T_u}\sum_{P\in\T}\ip f w^\infty_{T_u}, w_{I_P,m}, w_{I_P,m}\eqdef w^\infty_{T_u}\tilde{f}.
\end{equation*}
We will estimate the dyadic BMO norm of $\tilde{f}$. Let $J$ be a dyadic interval. Then
\begin{equation}\label{eq:sizeVsBMO}
  (\ind_J-\Exp_J)\tilde{f}
  =\sum_{\substack{P\in\T \\ I_P\subseteq J}}\ip f w^\infty_{T_u}, w_{I_P,m}, w_{I_P,m}
    +\ind_{\{2,3\}}(m)\ind_J\sum_{\substack{P\in\T \\ I_P=J^{(1)}}}\ip f w^\infty_{T_u}, w_{I_P,m}, w_{I_P,m},
\end{equation}
where the second sum contains at most one term, and occurs only for $m\in\{2,3\}$. Here, remember that $J^{(1)}$ denotes the dyadic parent of $J$.

Let $\T^*(J)$ be the collection of maximal quartiles $P\in\T$ with $I_P\subseteq J$. Since $\omega_T\subseteq\omega_P$ for all $P\in\T$, maximality implies that the time intervals $I_P$ of $P\in\T^*(J)$ are pairwise disjoint. Thus
\begin{equation*}
   \sum_{\substack{P\in\T \\ I_P\subseteq J}}\ip f w^\infty_{T_u}, w_{I_P,m}, w_{I_P,m}
   =w^\infty_{T_u}\sum_{R\in\T^*(J)}\sum_{\substack{P\in\T \\ P\leq R}}\ip f, w_{P_v},w_{P_v},
\end{equation*}
and by disjointness
\begin{equation*}
\begin{split}
  \Big\|\sum_{\substack{P\in\T \\ I_P\subseteq J}}\ip f w^\infty_{T_u}, w_{I_P,m}, w_{I_P,m}\Big\|_{L^p(\R_+;X_v)}^p
  &=\sum_{R\in\T^*(J)}\Big\|\sum_{\substack{P\in\T \\ P\leq R}}\ip f, w_{P_v},w_{P_v}\Big\|_{L^p(\R_+;X_v)}^p \\
  &\leq\sum_{R\in\T^*(J)}\size_{p,v}(\P)^p\abs{I_R}\leq\size_{p,v}(\P)^p\abs{J},
\end{split}
\end{equation*}
where the estimate by size follows from the fact that each $\{P\in\T:P\leq R\}\subseteq\P$ is another $u$-tree with top $R$, as one easily checks.

For the second term in \eqref{eq:sizeVsBMO}, we just observe that
\begin{equation*}
  \abs{\ip fw^\infty_{T_u},w_{I_P,m},}
  =\abs{\ip f, w_{P,v},}\leq\size_{p,v}(\P)\abs{I_P}^{1/2},\qquad\abs{w_{I_P,m}}\leq\abs{I_P}^{-1/2},
\end{equation*}
which gives the same estimate for the $L^p(\R_+;X_v)$ norm for this second term. Altogether, we have
\begin{equation*}
  \|(\ind_J-\Exp_J)\tilde{f}\|_{L^p(\R_+;X_v)}\leq 2\size_{p,v}(\P)\abs{J}^{1/p},
\end{equation*}
and hence by the John--Nirenberg inequality that
\begin{equation*}
  \|(\ind_J-\Exp_J)\tilde{f}\|_{L^q(\R_+;X_v)} \lesssim_{p,q} \size_{p,v}(\P)\abs{J}^{1/q}.
\end{equation*}
Specializing this to $J=I_T\supseteq\operatorname{supp}\tilde{f}$ gives
\begin{equation*}
  \|\tilde{f}\|_{L^q(\R_+;X_v)}\lesssim_{p,q}\size_{p,v}(\P)\abs{I_T}^{1/q}+\abs{I_T}^{1/q}\fint_{I_T}\abs{\tilde{f}}dx\lesssim \size_{p,v}(\P)\abs{I_T}^{1/q}.
\end{equation*}
Observing that
\begin{equation*}
  \abs{I_T}^{-1/q}\|\tilde{f}\|_{L^q(\R_+;X_v)}=\abs{I_T}^{-1/q}\Big\|\sum_{P\in\T}\ip f, w_{P_v},w_{P_v}\Big\|_{L^q(\R_+;X_v)}
\end{equation*}
and taking the supremum over all $u$-trees $\T\subseteq\P$ for all $u\neq v$ completes the proof.
\end{proof}
%%%%%%%%%%%%%%%%%%%%%%%%%%%%%% PROOF  PROOF PROOF
%%%%%%%%%%%%%%%%%%%%%%%%%%%%%% PROOF  PROOF PROOF

%%%%%%%%%%%%%%%%%%%%%%%%%%%%%% SECTION  SECTION SECTION:Tree lemma
%%%%%%%%%%%%%%%%%%%%%%%%%%%%%% SECTION  SECTION SECTION
\section{The tree lemma}\label{s.tree}
In this section we prove the basic estimate where the quartile operator is considered over a single tree. We will need two simple auxiliary lemmas. The first gives an estimate of the randomized projections on non-overlapping tiles by the corresponding size.
%%%%%%%%%%%%%%%%%%%%%%%%%%%%%% LEMMA  LEMMA LEMMA:randomized projections are controlled by size
%%%%%%%%%%%%%%%%%%%%%%%%%%%%%% LEMMA  LEMMA LEMMA
\begin{lemma}\label{l.random_size} For a positive integer $N$ and for $j=0,1,2,\ldots,N-1$ denote by $\alpha_j=e^{2\pi i \frac{j}{N}}$ the $N$-the roots of unity. Suppose that the random variables $\eta_P$ take the values $\alpha_0,\ldots,\alpha_{N-1}$ on every tile $P$ with equal probability and independently of other tiles. Let $0\leq u \leq 3$ and $v\in\{1,2,3\}\setminus\{ u\}$, and let $\T$ be a $u$-tree. For $1<p<\infty$ we have
	\begin{align*}
		\bigg(\int_{I_T} \Exp\Abs{\sum_{P\in\T}\eta_P  \ip f_v,w_{P_v},  \frac{ \ind_{I_P}(x)}{|I_P|^\frac{1}{2}} }^p\bigg)^\frac{1}{p} \lesssim_{N,p} \size_{v,p}(\T)|I_T|^\frac{1}{p}.
	\end{align*}
\end{lemma}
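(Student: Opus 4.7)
The plan is to use Lemma~\ref{l.haarshift} to separate each wave packet $w_{P_v}$ into a modulus-one factor $w_{T_u}^\infty$ and a Haar-like factor supported on $I_P$, and then to combine a pointwise sign manipulation with UMD unconditionality to replace the randomized expression by one controlled by the size. Write $g\eqdef f_v\cdot w_{T_u}^\infty$. By Lemma~\ref{l.haarshift}, for each $P\in\T$ we have $\ip f_v,w_{P_v}, =\epsilon_{PT}\ip g,w_{I_P,m},$, with $\epsilon_{PT}\in\{\pm 1\}$ and $m\in\{1,2,3\}$ depending only on $(u,v)$. Since $|w_{I_P,m}(x)|=\ind_{I_P}(x)/|I_P|^{1/2}$, there is a deterministic sign $\sigma_P(x)\in\{-1,0,+1\}$ so that $\ind_{I_P}(x)/|I_P|^{1/2}=\sigma_P(x)\,w_{I_P,m}(x)$, whence
\begin{equation*}
  \sum_{P\in\T}\eta_P\ip f_v,w_{P_v},\frac{\ind_{I_P}(x)}{|I_P|^{1/2}}=\sum_{P\in\T}\tau_P(x)\,c_P\,w_{I_P,m}(x),
\end{equation*}
where $c_P\eqdef\ip g,w_{I_P,m},$ and $\tau_P(x)\eqdef\eta_P\epsilon_{PT}\sigma_P(x)$. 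For each fixed $x$, the $\tau_P(x)$ are independent, uniformly bounded, symmetric random variables whose $L^p$-Kahane norms are equivalent, with constants depending only on $N$ and $p$, to those of the original $\{\eta_P\}$. After integrating in $x\in I_T$ and applying Fubini, the task is reduced to bounding $\Exp_\eta\bigl\|\sum_P\eta_P c_P w_{I_P,m}\bigr\|^p_{L^p(I_T;X_v)}$.

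We claim this is in turn $\lesssim \|F\|^p_{L^p(I_T;X_v)}$, where $F\eqdef\sum_P c_P w_{I_P,m}$. For $m=1$, this is the unconditionality of the Haar system in $L^p(X_v)$ (a defining property of UMD), combined with the Kahane equivalence between Rademacher and $N$-th-root randomization, valid in any Banach space. For $m\in\{2,3\}$, using $w_{I_P,m}=(h_{I_{P_\ell}}\pm h_{I_{P_r}})/\sqrt 2$ from Remark~\ref{rem:WalshVsHaar} and the triangle inequality, the problem reduces to estimating $\Exp_\eta\bigl\|\sum_P\eta_P c_P h_{I_{P_a}}\bigr\|^p_{L^p}$ for $a\in\{\ell,r\}$. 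Haar unconditionality gives $\Exp_\eta\bigl\|\sum_P\eta_P c_P h_{I_{P_a}}\bigr\|^p\eqsim\bigl\|\sum_P c_P h_{I_{P_a}}\bigr\|^p$, and the right-hand side is the $L^p$-norm of the Haar projection of $\sqrt 2\,F$ onto the subfamily $\{h_{I_{P_a}}\}_P$. Such projections onto arbitrary subsets of dyadic intervals are bounded on $L^p(X_v)$ for UMD $X_v$ (another instance of Haar unconditionality), so they are $\lesssim\|F\|^p_{L^p(I_T;X_v)}$.

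To finish, Lemma~\ref{l.haarshift} identifies $F=w_{T_u}^\infty\sum_{P\in\T}\ip f_v,w_{P_v},w_{P_v}$, and since $|w_{T_u}^\infty|=\ind_{I_T}$, the size definition applied to the $u$-tree $\T$ itself gives
\begin{equation*}
  \|F\|^p_{L^p(I_T;X_v)}=\Bigl\|\sum_{P\in\T}\ip f_v,w_{P_v},w_{P_v}\Bigr\|^p_{L^p(I_T;X_v)}\le \size_{v,p}(\T)^p|I_T|,
\end{equation*}
which closes the estimate. The principal technical point is the second paragraph above: establishing the $\{w_{I_P,m}\}_P$ unconditionality when $m\in\{2,3\}$, which we reduce to the standard Haar unconditionality and boundedness of Haar projections, both provided by the UMD property of $X_v$.
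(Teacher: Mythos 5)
Your proof is correct and follows essentially the same route as the paper's: a contraction-principle step to absorb the factors $\ind_{I_P}/|I_P|^{1/2}$ into the wave packets, UMD unconditionality to strip the random variables $\eta_P$, and then the definition of $\size_{v,p}$ applied to the tree itself. The only difference is presentational: where the paper compresses the unconditionality step into the assertion that $\sum_P\eta_P\ip f_v,w_{P_v},w_{P_v}$ is a martingale transform of $\sum_P\ip f_v,w_{P_v},w_{P_v}$, you unpack it explicitly via Lemma~\ref{l.haarshift} and Remark~\ref{rem:WalshVsHaar} into Haar unconditionality and boundedness of Haar projections, which is a legitimate (and arguably more careful) justification of the same point.
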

%%%%%%%%%%%%%%%%%%%%%%%%%%%%%% LEMMA  LEMMA LEMMA
%%%%%%%%%%%%%%%%%%%%%%%%%%%%%% LEMMA  LEMMA LEMMA

%%%%%%%%%%%%%%%%%%%%%%%%%%%%%% PROOF  PROOF PROOF
%%%%%%%%%%%%%%%%%%%%%%%%%%%%%% PROOF  PROOF PROOF
\begin{proof} Observing that for any quartile $P$ we have $w_{P_v} w_{P_v}=\frac{1}{|I_P|}\textbf 1_{I_P}$ we write
\begin{align*}
 \Exp\Abs{\sum_{P\in\T}\eta_P  \ip f_v,w_{P_v},  \frac{ \ind_{I_P}(x)}{|I_P|^\frac{1}{2}} }^p&= \Exp\Abs{\sum_{P\in\T}\eta_P  \ip f_v,w_{P_v},   w_{P_v} (x)w_{P_v}(x) |I_P|^\frac{1}{2} }^p
\\
&\lesssim_{N,p} \max_{P\in\T} \big(|I_P|^\frac{1}{2}|w_P(x)|\big)^p \Exp\Abs{\sum_{P\in\T}\eta_P  \ip f_v,w_{P_v},   w_{P_v} (x)}^p
\end{align*}
by the Kahane contraction principle. Noting that $\max_{P\in\T} \big(|I_P|^\frac{1}{2}|w_P(x)|\big)\lesssim 1$ we get
\begin{align*}
\int_{I_T} \Exp\Abs{\sum_{P\in\T}\eta_P  \ip f_v,w_{P_v},  \frac{ \ind_{I_P}(x)}{|I_P|^\frac{1}{2}} }^pdx &\lesssim_N \int_{I_T} \Exp\Abs{\sum_{P\in\T}\eta_P  \ip f_v,w_{P_v},   w_{P_v} (x)}^p dx
\\ 
 &\lesssim \Exp \int_{I_T}\Abs{\sum_{P\in\T}  \ip f_v,w_{P_v}, w_{P_v}(x) }^p dx \lesssim |I_T|\size_{v,p}(\T) ^p,
\end{align*}
where the second approximate inequality follows by the UMD property of $X_v$ and the fact that  $\sum_{P\in\T} \eta_P \ip f_v,w_{P_v}, w_{P_v}(x) $ is a martingale transform of the function $\sum_{P\in\T}  \ip f_v,w_{P_v}, w_{P_v}(x) $. 
\end{proof}
%%%%%%%%%%%%%%%%%%%%%%%%%%%%%% PROOF  PROOF PROOF
%%%%%%%%%%%%%%%%%%%%%%%%%%%%%% PROOF  PROOF PROOF

For the case where we project on overlapping tiles we use:

%%%%%%%%%%%%%%%%%%%%%%%%%%%%%% LEMMA LEMMA LEMMA:averages on overlapping tiles 
%%%%%%%%%%%%%%%%%%%%%%%%%%%%%% LEMMA LEMMA LEMMA
\begin{lemma}\label{l.convex} Let $\mathcal J$ be a collection of dyadic intervals on the real line that is convex in the sense that for any three dyadic intervals $I',I,I''$ with $I',I''\in\mathcal J$, $I'\subseteq I\subseteq I''$ implies that $I\in\mathcal J$. For some Banach space $X$ let $f:\bigcup_{J\in\mathcal J}J \to X$ satisfy
\begin{align*}
\abs{\langle f\rangle_{J}}\leq  \lambda\quad\mbox{for all}\quad J\in\mathcal J,
\end{align*}
for some $\lambda>0$. There exists a function $g:\bigcup_{J\in\mathcal J}J \to X$ such that $\langle g\rangle_{J}=\langle f \rangle_{J}$ for all $J\in\mathcal J$ and $\norm g.L^\infty(\bigcup_{J\in\mathcal J}J ;X). \leq 3\lambda.$
\end{lemma}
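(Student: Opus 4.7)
The plan is to define $g$ by a stopping-time construction driven by the tree structure of $\mathcal{J}$. First, I would exploit convexity to establish that tree structure: whenever $J' \subsetneq J$ both lie in $\mathcal{J}$, every dyadic interval between them lies in $\mathcal{J}$ as well, so in particular every non-maximal element has its dyadic parent in $\mathcal{J}$. It then suffices to handle a single maximal $J_0 \in \mathcal{J}$, the construction being performed independently on each maximal component of $\bigcup_{J \in \mathcal{J}} J$.

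Next, I would recurse from $J_0$: at each $J \in \mathcal{J}$ examine its dyadic children $J_l, J_r$ and split into three cases --- (a) both children lie in $\mathcal{J}$, in which case recurse on both and defer the definition of $g$; (b) exactly one child, say $J_l$, lies in $\mathcal{J}$ while $J_r \notin \mathcal{J}$, in which case declare $J_r$ a stopping interval and continue the recursion on $J_l$; (c) neither child lies in $\mathcal{J}$, in which case declare $J$ a stopping interval. I would then set $g$ equal to the constant $\langle f \rangle_J$ on each case-(c) stopping interval $J$, and equal to the constant $2\langle f \rangle_J - \langle f \rangle_{J_l}$ on each case-(b) stopping interval $J_r$ (with $J$ its parent in $\mathcal{J}$). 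The hypothesis $|\langle f \rangle_I| \leq \lambda$ for $I \in \mathcal{J}$ together with the triangle inequality immediately yields $|g| \leq \lambda$ on case-(c) pieces and $|g| \leq 2\lambda + \lambda = 3\lambda$ on case-(b) pieces.

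The averaging identity $\langle g \rangle_J = \langle f \rangle_J$ for every $J \in \mathcal{J}$ would be verified by induction on the depth of $J$ in the $\mathcal{J}$-tree. Case (c) is immediate; case (a) reduces via $\langle g \rangle_J = \tfrac{1}{2}(\langle g \rangle_{J_l} + \langle g \rangle_{J_r})$ to the inductive hypothesis on both children; case (b) is precisely the algebraic content of the choice of constant, since $\tfrac{1}{2}\langle f \rangle_{J_l} + \tfrac{1}{2}(2\langle f \rangle_J - \langle f \rangle_{J_l}) = \langle f \rangle_J$.

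The main obstacle I anticipate is the potential non-termination of the recursion, leaving a residual set $R$ of points sitting inside an infinite descending chain of case-(a) intervals. For each such $x$ one has $|\langle f \rangle_{J_k(x)}| \leq \lambda$ at every level $k$, so by the Lebesgue differentiation theorem $|f(x)| \leq \lambda$ for a.e.\ $x \in R$; I would set $g := f$ on $R$ (and $g := 0$ on the negligible exceptional set), which preserves the bound $\|g\|_\infty \leq 3\lambda$ and, by additivity of the integral over the partition of $J$ into its descendant stopping intervals together with $R \cap J$, the averaging identity. Alternatively, one may truncate $\mathcal{J}$ to intervals of length at least $2^{-n}$, apply the finite version, and pass to a weak-$\ast$ limit; in the intended applications where $f$ is a finite dyadic step function the recursion terminates in finitely many steps and no limiting argument is required.
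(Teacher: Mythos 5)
Your construction is essentially identical to the paper's: the paper calls your case-(b) and case-(c) stopping intervals ``bad'' intervals (dyadic children $B\notin\mathcal J$ of an interval $E(B)\in\mathcal J$), assigns on them exactly the same constants ($\langle f\rangle_B=2\langle f\rangle_{E(B)}-\langle f\rangle_{J}$ when the sibling $J$ is in $\mathcal J$, and $\langle f\rangle_{E(B)}$ when both siblings are bad), keeps $g=f$ off the bad intervals, and bounds $g$ there by the same Lebesgue differentiation argument along nested intervals of $\mathcal J$. The proof is correct and takes the same route.
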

%%%%%%%%%%%%%%%%%%%%%%%%%%%%%% LEMMA LEMMA LEMMA
%%%%%%%%%%%%%%%%%%%%%%%%%%%%%% LEMMA LEMMA LEMMA

%%%%%%%%%%%%%%%%%%%%%%%%%%%%%% PROOF PROOF PROOF
%%%%%%%%%%%%%%%%%%%%%%%%%%%%%% PROOF PROOF PROOF
\begin{proof} Let us call a dyadic interval $E\in\mathcal J$ \emph{exceptional} if it has at least one dyadic child $B\notin \mathcal J$ and denote by $\mathcal E$ the collection of all exceptional $E\in\mathcal J$. Call a dyadic interval $B\notin \mathcal J$ \emph{bad} if it is the child of some exceptional $E=E(B)\in\mathcal J$. It is obvious that all the bad dyadic intervals are pairwise disjoint. Now we define the function $g:\bigcup_{J\in\mathcal J}J \to X$ as follows. If $B$ has a dyadic sibling $J\in\mathcal J$ then we set $g(x)\eqdef \langle f\rangle_{B}$ for $x\in B$. We have
\begin{align*}
	\langle f\rangle_{E(B)}=\frac{1}{2}\langle f\rangle_B+\frac{1}{2}\langle f\rangle_J\Rightarrow \langle f\rangle_{B}=2	\langle f\rangle_{E(B)}-\langle f\rangle_J\Rightarrow | \langle g\rangle_B|=| \langle f\rangle_B| \leq 3\lambda.
\end{align*}
If $B$ has a bad sibling $B_1$ we set $g(x)\eqdef\langle f \rangle_{E(B)}$ for $x\in E(B)=B\cup B_1$. Then
\begin{align*}
\langle g\rangle_B=\langle g\rangle_{B_1}=\langle f\rangle_{E(B)}\Rightarrow |\langle g\rangle_B|\leq \lambda.
\end{align*}
If $x\notin \bigcup_{B\mbox{ \tiny bad}}B$ then just define $g(x)\eqdef f(x)$.

Now let $J\in\mathcal J$. There are two possibilities. If $J$ does not intersect any of the bad intervals, in which case $g\equiv f$ on $J$, we have that $\langle g\rangle_J=\langle f\rangle_J$. If, on the other hand, $J\cap B\neq \emptyset $ for some bad $B$ then convexity implies that $B\subsetneq J$. Now the fact that $\langle g\rangle_B=\langle f \rangle_B$ for all bad intervals $B$ together with the non-intersecting case just considered easily implies that $\langle g\rangle_J=\langle f \rangle_J$ as we wanted.

 For the $L^\infty$-bound note that if $x\in \cup_{B\mbox{ \tiny bad}} B$ then $x\in B'$ for some bad $B'$ and $|g(x)|=|\langle g \rangle_{  B'}|\leq 3\lambda$. On the other hand, for almost every $x\notin \cup_{B\mbox{ \tiny bad}}B$ there is a decreasing sequence  $I_0\supset I_1\supset \cdots\supset I_k\supset \cdots$ of nested dyadic intervals which belong to $\mathcal J$ and  contain $x$ so that
\begin{align*}
	|g(x)|=\lim_{|I_k|\to 0}|\langle g\rangle_{I_K}| = \lim_{|I_k|\to 0}| \langle f\rangle_{I_k}|\leq \lambda.
\end{align*}
Combining the previous cases we get $\norm g.L^\infty(\RR;X).\leq 3\lambda.$
\end{proof}
%%%%%%%%%%%%%%%%%%%%%%%%%%%%%% PROOF PROOF PROOF
%%%%%%%%%%%%%%%%%%%%%%%%%%%%%% PROOF PROOF PROOF

%%%%%%%%%%%%%%%%%%%%%%%%%%%%%% REMARK REMARK REMARK
%%%%%%%%%%%%%%%%%%%%%%%%%%%%%% REMARK REMARK REMARK
\begin{remark} Suppose that $\T$ is a convex tree and consider the collection of dyadic intervals 
	\begin{align*}
		\mathcal J_{\T}\eqdef \{I_P:P=I_P\times \omega_P\in\T\}.
	\end{align*}
Now suppose that $I$ is some dyadic interval and $I',I''\in \mathcal J_T$ with $I'\subseteq I \subseteq I''$. Let $P',P''\in\T$ so that $I_{P'}=I'$ and $I_{P''}=I''$. We have $|\omega_{P''}|\leq \frac{4}{|I|}\leq |\omega_{P'}|$ and $\omega_{P''}\subseteq \omega_{P'}$. Thus there is a dyadic interval $\omega$ of length $4/|I|$ such that the quartile $P\eqdef I\times \omega$ satisfies $P'\leq P \leq P''$. We conclude that $P\in \T$ thus $I\in\mathcal J_\T$. This means that a  convex tree induces a convex collection of dyadic intervals in the sense of Lemma \ref{l.convex}.
\end{remark}
%%%%%%%%%%%%%%%%%%%%%%%%%%%%%% REMARK REMARK REMARK
%%%%%%%%%%%%%%%%%%%%%%%%%%%%%% REMARK REMARK REMARK

%%%%%%%%%%%%%%%%%%%%%%%%%%%%%% LEMMA  LEMMA LEMMA:tree lemma
%%%%%%%%%%%%%%%%%%%%%%%%%%%%%% LEMMA  LEMMA LEMMA
\begin{lemma}\label{l.tree}If $\T$ is a convex tree then
	\begin{align*}
		\abs{\Lambda_\T(f_1,f_2,f_3)}\lesssim |I_T| \prod_{v=1} ^3 \size_v(\T).
	\end{align*}
\end{lemma}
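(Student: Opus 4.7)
My plan is to decompose $\T$ into its four $u$-subtrees $\T_u := \{P \in \T : P \leq_u T\}$ using fact (ii) of \S\ref{s.quartiles}. Each $\T_u$ inherits convexity from $\T$ (if $P',P''\in\T_u$ and $P'\leq P\leq P''$ then $P_u\leq P''_u\leq T_u$, so $P\in\T_u$) and satisfies $\size_v(\T_u)\leq\size_v(\T)$, so by the triangle inequality it suffices to prove the estimate on each $u$-tree separately. Lemma~\ref{l.haarshift} then rewrites the wave packets as $w_{P_v}=\epsilon_{PT}\,w_{T_u}^\infty\,w_{I_P,m_v}$ with $m_v=0$ iff $v=u$. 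Setting $g_v:=f_v\,w_{T_u}^\infty$, the form $\Lambda_\T$ becomes a sum over $P$ of $\Pi$ applied to an average $\langle g_u\rangle_{I_P}$ (for the slot matching $u\in\{1,2,3\}$) and to Haar-like coefficients $\langle g_v,w_{I_P,m_v}\rangle$ in the other slots, with an extra $|I_P|^{-1/2}$ factor present exactly in the $u=0$ case.

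For $u\in\{1,2,3\}$, say $u=1$ by symmetry, testing the definition of $\size_1(\T)$ against the single-quartile subtree $\{P\}$ gives $\|\langle g_1\rangle_{I_P}\|_{X_1}\leq\size_1(\T)$. Since convexity of $\T$ passes to $\{I_P:P\in\T\}$ by the remark after Lemma~\ref{l.convex}, Lemma~\ref{l.convex} produces $\tilde g_1\in L^\infty(X_1)$ with $\|\tilde g_1\|_\infty\lesssim\size_1(\T)$ and $\langle\tilde g_1\rangle_{I_P}=\langle g_1\rangle_{I_P}$. I would then write this average as $|I_P|^{-1}\int_{I_P}\tilde g_1$, distribute the weight $\ind_{I_P}/|I_P|=\chi_P(x)^2$ (with $\chi_P:=\ind_{I_P}/|I_P|^{1/2}$) into the other two slots by trilinearity, and insert iid Bernoulli signs $\eta_P$ via $\sum_P\Pi(a,b_P,c_P)=\Exp\Pi(a,\sum_P\eta_P b_P,\sum_P\eta_P c_P)$, arriving at
\begin{equation*}
\Lambda_\T = \Exp\int\Pi\big(\tilde g_1(x),\,V_2(x,\eta),\,V_3(x,\eta)\big)\,dx,\qquad V_v(x,\eta):=\sum_{P\in\T}\eta_P\langle f_v,w_{P_v}\rangle\chi_P(x).
\end{equation*}
Boundedness of $\Pi$, the $L^\infty$ bound on $\tilde g_1$, and Cauchy--Schwarz in $(x,\eta)$ reduce matters to $(\Exp\int\|V_v\|^2)^{1/2}\lesssim\size_v(\T)|I_T|^{1/2}$, which is exactly Lemma~\ref{l.random_size} at $p=2$ after absorbing $\epsilon_{PT}$ into $\eta_P$.

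The case $u=0$ is the hardest: now all three slots are Haar-like with $m_v=v$. I would use the Walsh identity $w_1w_2w_3\equiv 1$ to write $|I_P|^{-1/2}=\int w_{I_P,1}(x)w_{I_P,2}(x)w_{I_P,3}(x)\,dx$ and, by trilinearity, distribute one $w_{I_P,v}$ into each slot, obtaining $\Lambda_\T=\int\sum_P\Pi(\cdots)\,dx$ with a fully Walsh-projected integrand. To introduce random signs into all three slots simultaneously I take two independent iid Bernoulli families $\eta^{(1)},\eta^{(2)}$ and set $\eta^{(3)}_P:=\eta^{(1)}_P\eta^{(2)}_P$; a direct computation using $\Exp\eta^{(i)}_P\eta^{(i)}_R=\delta_{PR}$ yields
\begin{equation*}
\sum_P\Pi(a_P,b_P,c_P)=\Exp\Pi\Big(\sum_P\eta^{(1)}_P a_P,\,\sum_P\eta^{(2)}_P b_P,\,\sum_P\eta^{(3)}_P c_P\Big),
\end{equation*}
leading to $\Lambda_\T=\int\Exp\Pi(V_1,V_2,V_3)\,dx$ for random-sign Walsh projections $V_v$. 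The internal Walsh signs of $w_{I_P,v}$ are absorbed by $\eta^{(v)}_P$ at each fixed $x$ (the modified signs remain iid $\pm 1$), so Lemma~\ref{l.random_size} at $p=3$, combined with Lemma~\ref{l.bmosize} to switch to the $L^3$-size, bounds each $(\Exp\int\|V_v\|^3)^{1/3}\lesssim\size_v(\T)|I_T|^{1/3}$. Triple Hölder in both $\eta$ and $x$ then produces the desired $|I_T|\prod_v\size_v(\T)$.

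The principal difficulty is thus the $u=0$ case, where three Haar-like slots cannot be decoupled by a single iid sign family and the correlated triple $(\eta^{(1)},\eta^{(2)},\eta^{(1)}\eta^{(2)})$ is required. Marginal iid-ness of each family is precisely what Lemma~\ref{l.random_size} needs, and Hölder in both the expectation and the spatial variable at the symmetric exponent $3$ couples the three resulting estimates at the right scaling $|I_T|^{1/3}\cdot|I_T|^{1/3}\cdot|I_T|^{1/3}=|I_T|$.
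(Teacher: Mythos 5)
Your proposal is correct and follows essentially the same route as the paper: split the convex tree into its $u$-subtrees, use Lemma~\ref{l.haarshift} and Lemma~\ref{l.convex} to replace the $u$-slot by an $L^\infty$ majorant of size $\lesssim\size_u(\T)$ when $u\in\{1,2,3\}$, decouple with random signs, and conclude via Cauchy--Schwarz (resp.\ triple H\"older at exponent $3$ for $u=0$) together with Lemmas~\ref{l.random_size} and~\ref{l.bmosize}. The only variation is cosmetic: in the $0$-tree case the paper decouples with a single family of third roots of unity, whereas you use two independent Bernoulli families and their product; both are covered by Lemma~\ref{l.random_size} since each of your three families is marginally i.i.d.\ on the square roots of unity.
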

%%%%%%%%%%%%%%%%%%%%%%%%%%%%%% LEMMA  LEMMA LEMMA
%%%%%%%%%%%%%%%%%%%%%%%%%%%%%% LEMMA  LEMMA LEMMA

%%%%%%%%%%%%%%%%%%%%%%%%%%%%%% PROOF  PROOF PROOF
%%%%%%%%%%%%%%%%%%%%%%%%%%%%%% PROOF  PROOF PROOF
\begin{proof} Let $\T$ be a convex $u$-tree for some $u\in\{1,2,3\}$. Recall from Lemma \ref{l.haarshift} that we have
\begin{align*}
\frac	{\ip f_u,w_{P_u},}{|I_P|^\frac{1}{2}}=\frac{\ip f_u w_{T_u} ^\infty,\textbf 1_{I_P},}{|I_P|}=\langle  w_{T_u} ^\infty  f_u\rangle_{I_P}\eqdef \langle \tilde   f_u\rangle_{I_P},
\end{align*}	
for some unimodular function $w_{T_u} ^\infty$ which depends only on the tree $\T$. Let $\mathcal J\eqdef \{I_P:P\in\T\}$. We clearly have $|\langle \tilde f_u\rangle_J|\leq \size_u(\T)$ for all $J\in \mathcal J$. Furthermore, the fact that $\T$ is a convex tree implies that the family $\mathcal J$ is convex in the sense of Lemma \ref{l.convex}. By the same lemma we conclude that there is a function $g_u:\bigcup_{J\in\mathcal J}J \to X_u$ with $\langle g_u\rangle_{I_P}=\langle\tilde f_u \rangle_{I_P}$ for all $P\in\mathcal \T$ and $\norm g_u. {L^\infty(\cup_{P\in\mathcal T}I_P;X)}. \leq 3 \size_u(\T)$. For convenience extend $g_u$ to be identically zero on $I_T\setminus \cup_{P\in\mathcal \T}I_P$.

For $\{b,c\}=\{1,2,3\}\setminus\{u\}$ we have the following identity:
\begin{align*}
\abs{\Lambda_\T(f_1,f_2,f_3)}&=\Abs{\sum_{P\in\T}\frac{1}{|I_P|^\frac{1}{2}} \ip f_1,w_{P_1},\cdot \ip f_2,w_{P_2},\cdot \ip f_3,w_{P_3},}
\\
&=\ABs{\sum_{P\in\T} \langle  \tilde  f_u \rangle_{I_P} \cdot   \ip f_b,w_{P_b}, \cdot \ip f_c,w_{P_c},}
\\
&=\ABs{\int_{I_T} \sum_{P\in\T}g_u(x)\cdot \frac{\ip f_b,w_{P_b},}{|I_P|^\frac{1}{2}}  \cdot \frac{\ip f_c,w_{P_c}, }{|I_P|^\frac{1}{2} }\textbf 1_{I_P}(x) dx}
	\\
	&=\ABs{\int_{I_T}g_u(x)  \cdot \Exp\bigg [ \bigg(\sum_{P\in\T}\eta_P  \ip f_b,w_{P_b}, \frac{\textbf 1_{I_P}(x)}{|I_P|^\frac{1}{2}}  \bigg) \cdot \bigg(\sum_{P\in\T}\eta_P  \ip f_c,w_{P_c}, \frac{\textbf 1_{I_P}(x)}{|I_P|^\frac{1}{2}}  \bigg) \Bigg ] dx},
\end{align*}
where we choose the value of $\eta_P$ for each tile $P$ to be $\pm 1$ with equal probability and independently of other tiles. Using Cauchy-Schwarz twice we get
\begin{align*}
	\abs{\Lambda(f_1,f_2,f_3)}&\leq \norm g_u .L^\infty(\RR;X). \bigg(\int_{I_T} \Exp\Abs{\sum_{P\in\T}\eta_P  \ip f_b,w_{P_b},  \frac{\textbf 1_{I_P}(x)}{|I_P|^\frac{1}{2}} }^2\bigg)^\frac{1}{2} \bigg(\int_{I_T}\Exp\Abs{\sum_{P\in\T}\eta_P  \ip f_c,w_{P_c},  \frac{\textbf 1_{I_P}(x)}{|I_P|^\frac{1}{2}} }^2\bigg)^\frac{1}{2} 
	\\
	&\lesssim |I_T|\prod_{v=1} ^3 \size_v(\T),
\end{align*}
where in the last inequality we have used Lemmas \ref{l.random_size} and \ref{l.bmosize}, and the fact that $\norm g_u.L^\infty(\RR;X_1).\leq 3 \size_u(\T)$.

If $\T$ is a $0$-tree we argue as follows. Let $\eta_P$ take the values $\alpha_0,\alpha_1,\alpha_2$, i.e., the third roots of unity, on each tile, with equal probability and independently of other tiles. We write
\begin{align*}
	\abs{\Lambda_\T(f_1,f_2,f_3)}&=\ABs{\int_{I_T}\prod_{v=1} ^3 \Exp\bigg(\sum_{P\in\T}\eta_P  \ip f_v,w_{P_v}, \frac{\textbf 1_{I_P}(x)}{|I_P|^\frac{1}{2}}  \bigg)   dx}
	\\
	&\leq \prod_{v=1} ^3\bigg(\frac{1}{|I_T|}\int_{I_T}\Exp \ABs{\sum_{P\in\T}\eta_P  \ip f_v,w_{P_v}, \frac{\textbf 1_{I_P}(x)}{|I_P|^\frac{1}{2}}  } ^3  dx\bigg)^\frac{1}{3},
\end{align*}
by a double application of H\"older's inequality with exponents $p_1=p_2=p_3=3.$ Applying Lemmas \ref{l.random_size} and \ref{l.bmosize}, we get
\begin{align*}
		\abs{\Lambda_\T(f_1,f_2,f_3)}\leq|I_T| \prod_{v=1} ^3 \size_{v,3}(\T)\simeq |I_T|\prod_{v=1} ^3 \size_v(\T).
\end{align*}
Since any convex tree splits as $\T=\T_0\cup\T_1\cup\T_2\cup \T_3$, where $\T_u$ is a convex $u$-tree, the proof is complete.
\end{proof}
%%%%%%%%%%%%%%%%%%%%%%%%%%%%%% PROOF  PROOF PROOF
%%%%%%%%%%%%%%%%%%%%%%%%%%%%%% PROOF  PROOF PROOF 

\begin{remark} The previous estimate remains valid for the operator $|\Lambda_T|$. One way to see this in all cases is to write
\begin{align*}
\abs{\Lambda_\T}(f_1,f_2,f_3)&=\sum_{P\in\T}\frac{1}{|I_P|^\frac{1}{2}} \Abs{\ip f_1,w_{P_1},\cdot \ip f_2,w_{P_2},\cdot \ip f_3,w_{P_3},}
\\
&=\Abs{\sum_{P\in\T}\frac{\epsilon_P}{|I_P|^\frac{1}{2}} \ip f_1,w_{P_1},\cdot \ip f_2,w_{P_2},\cdot \ip f_3,w_{P_3}, }
\end{align*}
for some choice of complex sign $\epsilon_P$. The reader can easily check that the presence of an arbitrary complex sign in the sum above does not affect the estimates in Lemma \ref{l.tree}.
\end{remark}

%%%%%%%%%%%%%%%%%%%%%%%%%%%%%% SECTION  SECTION SECTION: Size Lemma
%%%%%%%%%%%%%%%%%%%%%%%%%%%%%% SECTION  SECTION SECTION
\section{The size lemma}\label{s.size}

In the previous section, we saw how to control our trilinear form $\Lambda$, when the summation is restricted to a convex tree $\T$. But eventually we need to consider $\Lambda$ over an arbitrary convex collection $\P$. The content of the next proposition is that we can always extract a sequence of trees from any such $\P$, in such a way that the size of the remaining part of $\P$ becomes strictly smaller than the original size. This will then allow an iteration, by which all of $\P$ is decomposed into trees in a controlled manner.

%%%%%%%%%%%%%%%%%%%%%%%%%%%%%% PROPOSITION  PROPOSITION PROPOSITION: size lemma
%%%%%%%%%%%%%%%%%%%%%%%%%%%%%% PROPOSITION  PROPOSITION PROPOSITION
\begin{proposition} Fix some $v\in\{1,2,3\}$, and let $X_v$ be a UMD space with quartile-type $q_v$. Then every finite, convex set of quartiles $\P$ has a disjoint decomposition
	\begin{align*}
		\P=\P_{\operatorname{small}}\cup\bigcup_j \T_j,
	\end{align*}
where $ \P_{\operatorname{small}}$ is a convex collection of tiles,  each $\T_j$ is a convex tree, and
\begin{align*}
	\size_{v} (\P_{\operatorname{small}})\leq 2^{-\frac{1}{q}} \size_{v}(\P) ,\quad \sum_{j}|I_{T_j}|\leq \size_{v} (\P) ^{-q} \|f\|_{L^{q_v}(\RR;X)} ^{q_v},
\end{align*}
where the $v$-size is defined with respect to the function $f\eqdef f_v\in L^{q_v}(\RR;X_v)$. Here the underlying space $X_v$ has quartile type $q_v$ so we use $\size_v(\P)=\size_{v,q_v}(\P)$.
\end{proposition}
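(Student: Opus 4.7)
Let $\sigma := \size_v(\P)$ and $q := q_v$ for brevity. The plan is a greedy iterative selection, carried out separately for each $u \in \{0,1,2,3\} \setminus \{v\}$, that extracts convex trees in such a way that (i) the remaining quartile collection has $v$-size at most $2^{-1/q}\sigma$, and (ii) the extracted $u$-trees form a $(u,v)$-good family in the sense of Section~\ref{s.quartileType}, so that the quartile-type inequality of $X_v$ can be invoked.

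For fixed $u \neq v$, set $\P^{(0)} := \P$. At stage $n$, terminate the loop for this $u$ as soon as every $u$-tree $\T \subset \P^{(n)}$ satisfies
\begin{equation*}
\frac{1}{|I_T|} \int \ABs{\sum_{P \in \T} \ip f, w_{P_v}, w_{P_v}(x)}^{q} dx \leq 2^{-1}\sigma^{q};
\end{equation*}
otherwise select a witnessing $u$-tree $\T_n^*$ whose top $T_n^*$ is extremal in a fixed geometric ordering of the candidate tops (for concreteness, the midpoint of $\omega_{T_v}$ minimal among all candidates), enlarge it to $\T_n := \{P \in \P^{(n)} : P \leq_u T_n^*\}$ together with a controlled convex completion that forces $\P^{(n+1)} := \P^{(n)} \setminus \T_n$ to remain convex, and iterate. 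Since $\P$ is finite the process halts after finitely many stages, and the stopping criterion together with the definition of $\size_v$ yields $\size_v(\P_{\text{small}}) \leq 2^{-1/q}\sigma$ once all three values of $u \neq v$ have been processed.

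The extremal choice of top at each stage secures the crucial $(u,v)$-goodness of each extracted family: if $P \in \T_n$ and $P' \in \T_{n'}$ with $n < n'$ were to satisfy $P_v \cap P'_v \neq \emptyset$, a short geometric argument combined with the extremality of $T_n^*$ would allow the later witness $\T_{n'}^*$ to have been absorbed into $\T_n$ at stage $n$, contradicting the selection rule. Given $(u,v)$-goodness, one applies the quartile-type inequality of $X_v$ (with the labels $u,v$ interchanged relative to the definition) to each of the three extracted families, combines the result with the density witness $\int |\sum_{P \in \T_n^*}\ip f, w_{P_v}, w_{P_v}|^{q} > 2^{-1}\sigma^{q}|I_{T_n}|$ guaranteed by construction, and arrives at
\begin{equation*}
\sigma^{q} \sum_n |I_{T_n}| \lesssim \sum_n \NOrm \sum_{P \in \T_n} \ip f, w_{P_v}, w_{P_v}. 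L^{q}(\RR;X_v). ^{q} \lesssim \Norm f. L^{q_v}(\RR;X_v). ^{q_v},
\end{equation*}
which, summed over the finitely many values of $u$, delivers the required bound on $\sum_j |I_{T_j}|$.

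The main obstacle I anticipate is the combined bookkeeping required to simultaneously preserve convexity of $\P^{(n+1)}$ (phrased in terms of the partial order $\leq$, whereas the trees are defined via $\leq_u$) and to secure $v$-tile disjointness between distinct extracted trees within the same $u$-family. Both issues are handled by the extremal selection of tops together with the convex-completion step; one checks that neither operation touches the witnessing subtree $\T_n^*$, so the density estimate that powers the counting bound survives the enlargement intact.
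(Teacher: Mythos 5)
Your overall architecture (greedy extraction of density-witnessing $u$-trees for each $u\neq v$, a stopping criterion at threshold $2^{-1/q}\size_v(\P)^q$, $(u,v)$-goodness of the extracted family, and then the quartile-type inequality to count the tops) is exactly the paper's, but the two points you defer to ``a short geometric argument'' and a ``fixed geometric ordering'' are precisely where the real content lies, and as stated your choices do not work. First, the extremal ordering of the candidate tops cannot be a single uniform one: the paper selects the top with \emph{minimal} center $c(\omega_T)$ when $u>v$ and \emph{maximal} center when $u<v$. The goodness argument runs by assuming $P_{j,v}\le P_{i,v}$ for quartiles in two distinct selected trees and deriving the chain $c(\omega_{T_{i,u}})<\sup\omega_{P_{j,v}}\le\inf\omega_{P_{j,u}}<c(\omega_{T_{j,u}})$, which forces tree $i$ to have been selected first and then contradicts maximality because $P_j\le T_i$. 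The direction of this chain reverses when $u<v$, so a uniformly ``minimal'' choice breaks the argument for half the cases; moreover comparing centers of the $v$-components $\omega_{T_v}$ (as you propose) does not order the trees at all, since a shorter interval nested in a longer one can have its center on either side.

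Second, you have omitted the splitting of each extracted $u$-tree into two subfamilies according to the parity of $\log_2\abs{\omega_P}$, as in \eqref{eq:splitLength}. This is not cosmetic: the goodness argument needs to upgrade $\omega_{P_{i,v}}\subsetneq\omega_{P_{j,v}}$ to $\omega_{P_i}\subset\omega_{P_{j,v}}$, which requires $\abs{\omega_{P_i}}\le\tfrac14\abs{\omega_{P_j}}$; adjacent scales (ratio $\tfrac12$) defeat the containment, so without the parity split the extracted family need not be $(u,v)$-good and the quartile-type inequality cannot be applied. The split costs only a factor $2$ in the counting bound. Finally, your ``controlled convex completion'' is unspecified and unnecessary: if at each stage you select \emph{maximal} trees $\{P\in\P^{(n)}:P\le T\}$ (the full tree under the top, not just its $\le_u$-part), convexity of both the extracted trees and of $\P_{\operatorname{small}}$ follows automatically from maximality and the convexity of $\P$, with no enlargement that could disturb the density witness.
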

%%%%%%%%%%%%%%%%%%%%%%%%%%%%%% PROPOSITION  PROPOSITION PROPOSITION: size lemma
%%%%%%%%%%%%%%%%%%%%%%%%%%%%%% PROPOSITION  PROPOSITION PROPOSITION

%%%%%%%%%%%%%%%%%%%%%%%%%%%%%% PROOF  PROOF PROOF
%%%%%%%%%%%%%%%%%%%%%%%%%%%%%% PROOF  PROOF PROOF
\begin{proof}For every tree $\T $ and $u \in \{ 0,1,2,3 \}\setminus\{v\}$, let
\begin{align*}
	\Delta_u (\T)^{q}\eqdef \frac{1}{|I_T|}\int\Abs{ \sum_{P\in \T_u} \ip f,w_{P_v}, w_{P_v} }^{q} dx,
\end{align*}
where $\T_u\eqdef \{P\in\T:P\leq_u T\}$ is the $u$-tree supported by the same top.

Let $\mathcal E_v \eqdef \size_{v,q	}(\P)$. For each (momentarily fixed) $u\in\{0,1,2,3\}\setminus\{v\}$, we iterate the following procedure:
Consider all maximal trees in $\P$ among the ones with $\Delta_u (\T)>\mathcal{E}_v 2^{-\frac{1}{q}}$. 
Among them let $\T^u_{1}$ be the one whose top $T^u_{1}$ has frequency interval $\omega_{T^u_{1}}$ with \emph{minimal} center $c(\omega_{T^u_{1}})$
if $u>v$, or \emph{maximal} center  $c(\omega_{T^u_{1}})$ if $u<v$.  Replace $\P$ by $\P\setminus \T_1$ and iterate. When no further trees can be found subject to the criterion that  $\Delta_u (\T)>\mathcal{E}_v 2^{-\frac{1}{q}}$ for the given $u$, we shift to the next value of $u$, and go through the same iteration. When we have completed this procedure for all values of $u\neq v$, the remaining collection $\P_{\operatorname{small}}$ satisfies
\begin{align*}
\Delta_u(\T)	\leq 2^{-\frac{1}{q}}\size_{v} (\P)\quad\mbox{for all}\quad u\in\{0,1,2,3\}\setminus\{v\},
\end{align*}
for all trees $\T\subset \P_{\operatorname{small}}$. By definition this means that we have $\size_{v}(\P)\leq2^{-\frac{1}{q}}\size_{v}(\P)$. Also, we have extracted trees $\T^u_{j}$, where $u\in\{0,1,2,3\}\setminus\{v\}$ and $j$ is a running index, so that $\Delta_{u} ( \T^u_j)>\mathcal E _v2^{-\frac{1}{q}}$.

For the collection $\{\T^u_{j}\}_j$ we estimate the sum over the top intervals as follows:
\begin{align*}
	\sum_{j}|I_{T_{u,j}}|& \leq \frac{2}{\mathcal E_v ^{q} } \sum_j \NOrm \sum_{P\in \T ^u_{j,u}}\ip f,w_{P_{v}},w_{P_{v}} .L^{q}(\RR;X). ^{q},
	\lesssim \frac{1}{\mathcal E_v ^{q} } \sum_{s=0}^1 \sum_j \NOrm \sum_{P\in \T ^u_{j,u}(s)}\ip f,w_{P_{v}},w_{P_{v}} .L^{q}(\RR;X). ^{q},
\end{align*}
where $\T^u_{j,u}$ is a $u$-tree, contained in the maximal tree $\T^u_j$, which realizes the size selection condition. We set
\begin{equation}\label{eq:splitLength}
  \T^u_{j,u}(s)\eqdef\{P\in\T^u_{j,u}:\log_2\abs{\omega_P}\equiv s\mod 2\},\qquad s=0,1.
\end{equation}
We fix $s\in\{0,1\}$ and suppress it from the notation, writing $\TT_u\eqdef \{\T^u_{j,u}(s)\}_j\eqdef \{T_{j,u}\}_j$, which is a collection of $u$-trees.

In order to apply the quartile-type hypothesis we need to check that $\TT_u$ is $(u,v)$-good. To that end, we split the treatment into two cases.
\subsubsection*{Case $u>v$}
Suppose that $P_j\in \T_{j,u}$ and $P_i\in\T_{i,u}$ with $i\neq j$. We need to show that $P_{j,v}\cap P_{i,v}=\emptyset$. Indeed, suppose for the sake of contradiction that for instance $P_{j,v}\leq P_{i,v}$ so that  $\omega_{P_{i,v}}\subset \omega_{P_{j,v}}$.  Since $P_i\neq P_j$ we actually have $\omega_{P_i} \subset \omega_{P_{j,v}}$, where we use the fact that $\abs{\omega_{P_i}}<\abs{\omega_{P_j}}$ implies $\abs{\omega_{P_i}}\leq\tfrac14\abs{\omega_{P_j}}$ by the splitting performed in \eqref{eq:splitLength}. Thus we have
\begin{align*}
\omega_{T_i}\subset \omega_{P_i} \subset \omega_{P_{j,v}}\quad\mbox{and}\quad \omega_{T_{j,u}} \subset \omega_{P_{j,u}}.
\end{align*}
Using the previous inclusions and the fact that $s>v$ we get
\begin{align*}
c(\omega_{T_{i,u}})< \sup \omega_{P_{j,v}} \leq \inf \omega_{P_{j,u}} < c(\omega_{T_{j,u}})
\end{align*}
which means that the tree $\T_i$ was chosen first and thus $i<j$. However $P_{j,v}\leq P_{i,v}$ implies that $P_j\leq P_i \leq T_i$ so that the quartile $P_j$ qualified for the tree $\T_i$ but was not chosen which is a contradiction because of the maximality condition in the selection of $\T_i$.

Thus $\TT_u$ is $(u,v)$-good for all $u>v$. Using the definition of the quartile type we get  for all $u>v$ and $s=0,1$ that
\begin{align*}
 \sum_{j}   \NOrm \sum_{P\in \T^u_{j,u}(s)}\ip f,w_{P_{v}},w_{P_{v}} .L^{q}(\RR;X). ^{q}\lesssim \norm f.L^{q}(\RR;X). ^{q}.  
\end{align*}
Summing over the finitely many choices of $u>v$ and $s=0,1$ gives the desired estimate for $\sum_j |I_{T_{u,j}}|$ for $u>v$.

\subsubsection*{Case $u<v$}
We use a similar argument to show that $\TT_u$ is $(u,v)$-good also in this case. Suppose to the contrary that there are  $P_j\in \T_{j,u}$ and $P_i\in \T_{i,u}$ with $i\neq j$ and  $P_{j,v}\leq P_{i,v}$ so that  $\omega_{P_{i,v}}\subset \omega_{P_{j,v}}$.  Since $P_i\neq P_j$  and the splitting \eqref{eq:splitLength} is in force, we actually have $\omega_{P_i} \subset \omega_{P_{j,v}}$, and then
\begin{align*}
\omega_{T_i}\subset \omega_{P_i} \subset \omega_{P_{j,v}}\quad\mbox{and}\quad \omega_{T_{j,u}} \subset \omega_{P_{j,u}}.
\end{align*}
We have
\begin{align*}
c(\omega_{\tilde T_{i,s}})> \inf (\omega_{P_{j,v}})\geq\sup (\omega_{P_{j,u}})>c(\omega_{ T_{j,u}}).
\end{align*}
Remembering that we have chosen the trees $\T_j$ to have \emph{maximal} $c(\omega_{ T_{j,u}})$ (now that $u<v$) we conclude that $i<j$. However we have $P_j\leq T_i$ as before which leads to a contradiction. Thus $\TT_u$ is $(u,v)$-good again, and by the definition of the quartile-type hypothesis we estimate $\sum_{j} |I_{T_{u,j}}|$ exactly as in the case $u>v$.

The desired collection of trees is thus  $\{\T_{j,u}\}_{u\neq v; j}$.

Finally we observe that both the collection $\P_{\operatorname{small}}$ as well as the trees constructed are convex. Indeed let $\T_2$ be any of the constructed trees and $P',P''\in\T_2$ with $P'\leq P\leq P''$ for some quartile $P$. Since $\P$ is convex we have that $P\in\P$. Suppose that $P\notin \T_2$. Since $P\leq P''\leq T_2$ we necessarily have that $P\in\T_1$ where the tree $\T_1$ was chosen earlier in the construction. Since $P'\leq P\leq T_1$ we conclude that $P'$ should have been included in $\T_1$, a contradiction. For $\P_{\operatorname{small}}$ let $P'\leq P \leq P''$ with $P',P''\in \P_{\operatorname{small}}$. Since the original collection $\P$ was convex we conclude that $P\in\P$. Now $P\notin \P_{\operatorname{small}}$ means that $P$ was selected in one of the constructed trees, say $\T$. However in this case we should have also $P'\in \T$ by the maximality of $\T$, a contradiction.
\end{proof}
%%%%%%%%%%%%%%%%%%%%%%%%%%%%%% PROOF  PROOF PROOF
%%%%%%%%%%%%%%%%%%%%%%%%%%%%%% PROOF  PROOF PROOF

%%%%%%%%%%%%%%%%%%%%%%%%%%%%%% COROLLARY  COROLLARY COROLLARY:size lemma decomposition
%%%%%%%%%%%%%%%%%%%%%%%%%%%%%% COROLLARY  COROLLARY COROLLARY
\begin{corollary}\label{c.decomp} Let $\P$ be any finite, convex collection of quartiles. Suppose that $\norm f.q_v.=1$ and define the $\size_{v}(\P)$ with respect to $f_v$  for all $v\in\{1,2,3\}$. Then $\P$ admits the decomposition
\begin{align*}
\P=\bigcup_{n\in\mathbb Z} \bigcup_j \T_{n,j}\cup\P_{\operatorname{residual}},
\end{align*}
where $\T_{n,j}$ are convex trees,
\begin{align*}
\size_v(\T_{n,j}) \leq 2^\frac{n}{q_v}\norm f_v.q_v. \quad\mbox{for all}\quad v\in\{1,2,3\},\quad \sum_{j} |I_{T_{n,j}}| \leq C 2^{-n},
\end{align*}
and $\size_v(\P_{\operatorname{residual}})=0$ for all $v\in\{1,2,3\}$.
\end{corollary}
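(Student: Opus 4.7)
The plan is to iterate the preceding proposition, cycling through $v=1,2,3$ so that all three sizes are decreased in tandem. Since $\P$ is finite, I would first choose an integer $N$ so large that $\size_v(\P)\leq 2^{N/q_v}$ for every $v\in\{1,2,3\}$, and set $\P^{(N)}\eqdef \P$. The decomposition is then built by downward induction on $n\leq N$, maintaining the invariant that at the start of level $n$ there is a finite convex subcollection $\P^{(n)}\subseteq\P$ with $\size_v(\P^{(n)})\leq 2^{n/q_v}$ for every $v$.

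At level $n$, set $\P^{(n)}_0\eqdef \P^{(n)}$, and for $v=1,2,3$ in turn: if $\size_v(\P^{(n)}_{v-1})>2^{(n-1)/q_v}$, apply the preceding proposition to $\P^{(n)}_{v-1}$ with this index $v$ and the function $f_v$, obtaining a disjoint decomposition
\begin{align*}
\P^{(n)}_{v-1}=\P^{(n)}_{v}\cup\bigcup_j \T^v_{n,j},
\end{align*}
in which $\P^{(n)}_v$ and each $\T^v_{n,j}$ are convex, $\size_v(\P^{(n)}_v)\leq 2^{-1/q_v}\size_v(\P^{(n)}_{v-1})\leq 2^{(n-1)/q_v}$, and $\sum_j|I_{T^v_{n,j}}|\leq \size_v(\P^{(n)}_{v-1})^{-q_v}<2^{1-n}$ using $\|f_v\|_{q_v}=1$. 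Otherwise set $\P^{(n)}_v\eqdef \P^{(n)}_{v-1}$ and extract nothing for this $v$. Finally collect $\{\T_{n,j}\}_j\eqdef \bigcup_{v=1}^{3}\{\T^v_{n,j}\}_j$ and put $\P^{(n-1)}\eqdef \P^{(n)}_3$.

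Two observations close the induction. First, each tree $\T^v_{n,j}$ lies inside $\P^{(n)}_{v-1}\subseteq\P^{(n)}$, and since $\size_{v'}(\cdot)$ is monotone under removal of quartiles, $\size_{v'}(\T^v_{n,j})\leq\size_{v'}(\P^{(n)})\leq 2^{n/q_{v'}}$ for every $v'\in\{1,2,3\}$, giving the required bound on the sizes of all extracted trees simultaneously. Second, the same monotonicity guarantees that processing index $v$ does not disturb the bounds already achieved for $\size_{v'}$, $v'\neq v$, so $\P^{(n-1)}=\P^{(n)}_3$ satisfies $\size_v(\P^{(n-1)})\leq 2^{(n-1)/q_v}$ for every $v$, closing the induction. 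Summing the three extractions at each level yields $\sum_j|I_{T_{n,j}}|\leq 3\cdot 2^{1-n}=6\cdot 2^{-n}$. Finiteness of $\P$ forces termination, and the leftover convex collection $\P_{\text{residual}}$ then has $\size_v(\P_{\text{residual}})=0$ for every $v$, since otherwise one more invocation of the proposition would still extract a nonempty family of trees.

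The main obstacle is the need to decrease three distinct sizes simultaneously, whereas a single invocation of the preceding proposition lowers only one of them. The cycling scheme above, together with the elementary but essential observation that the remaining $\size_{v'}$ can only decrease as quartiles are removed, resolves this by processing $v=1,2,3$ sequentially at every level without spoiling the bounds attained in earlier sub-steps.
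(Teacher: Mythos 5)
Your argument is correct and is exactly the ``iterative use of the Size Lemma'' that the paper's one-line proof alludes to: you fix a starting level $N$, cycle through $v=1,2,3$ at each level, and use monotonicity of $\size_{v'}$ under passing to subcollections both to bound the sizes of all extracted trees and to preserve the bounds already achieved for the other indices. The only point worth stating slightly more carefully is the termination/residual step --- each nonempty extraction removes at least one quartile from the finite collection $\P$, so after finitely many extractions the remaining collection is stable and satisfies $\size_v \leq 2^{n/q_v}$ for all sufficiently negative $n$, forcing $\size_v(\P_{\operatorname{residual}})=0$ --- but your reasoning is essentially this.
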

%%%%%%%%%%%%%%%%%%%%%%%%%%%%%% COROLLARY  COROLLARY COROLLARY
%%%%%%%%%%%%%%%%%%%%%%%%%%%%%% COROLLARY  COROLLARY COROLLARY

%%%%%%%%%%%%%%%%%%%%%%%%%%%%%% PROOF  PROOF PROOF
%%%%%%%%%%%%%%%%%%%%%%%%%%%%%% PROOF  PROOF PROOF
\begin{proof}
If $\P$ is any finite collection of quartiles, there exists some large $n$ such that
\begin{align*}
\size_v(\P)\leq  2^\frac{n}{q_v}  \norm f_v.q_v.  \quad\mbox{for all}\quad v\in\{1,2,3\}.
\end{align*}
The decomposition follows by iterative use of Size Lemma.
\end{proof}
%%%%%%%%%%%%%%%%%%%%%%%%%%%%%% PROOF  PROOF PROOF
%%%%%%%%%%%%%%%%%%%%%%%%%%%%%% PROOF  PROOF PROOF

%%%%%%%%%%%%%%%%%%%%%%%%%%%%%% SECTION  SECTION SECTION
%%%%%%%%%%%%%%%%%%%%%%%%%%%%%% SECTION  SECTION SECTION
\section{The restricted weak type estimates above the quartile types}\label{s.local}
We start by a preliminary result that will allow us to get an initial estimate for the $v$-size of any collection $\P$, independently of the size lemma. At each step of the proof we will then  choose between the competing estimates, one coming from the lemma below and one being a consequence of the size reduction of any collection caused by the size lemma.
%%%%%%%%%%%%%%%%%%%%%%%%%%%%%% LEMMA  LEMMA LEMMA:size_v(P) < 1
%%%%%%%%%%%%%%%%%%%%%%%%%%%%%% LEMMA  LEMMA LEMMA
\begin{lemma}\label{l.size<1} Let $\mathcal  J \subseteq \{ I \in \mathcal D: \inf _I Mf \leq \lambda \}$ be a finite collection of dyadic intervals. For any interval $I\in\mathcal J$ write $I=I_0=I_l\cup I_r$ where $I_l,I_r$ are the dyadic children of $I$. Let $K$ be a dyadic intervals $K$. Then
\begin{align*}
\Norm \sum_{\substack{I_P\in\mathcal{J}\\ I_P\subseteq K}} \ip f,h_{I_a},h_{I_b}.L^p(\RR;X). \lesssim \lambda |K|^\frac{1}{p},
\end{align*}
for any possible combination of $a,b\in\{0,l,r\}$. 
\end{lemma}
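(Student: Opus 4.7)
The plan is to employ a Calder\'on--Zygmund decomposition of $f$ at height $\lambda$ in order to split $f=g+b$ with $\|g\|_\infty\lesssim\lambda$ and $b=\sum_{\tilde J}b_{\tilde J}$, where each $b_{\tilde J}$ is supported on a maximal dyadic interval $\tilde J\subseteq\Omega:=\{Mf>\lambda\}$ and has mean zero. The defining hypothesis $\mathcal J\subseteq\{I:\inf_I Mf\le\lambda\}$ is used in the form: no $I\in\mathcal J$ can be contained in $\Omega$, which forces $\tilde J\subseteq I$ whenever $I\in\mathcal J$ meets $\tilde J$. The vector-valued bounds $\|g\|_\infty\lesssim\lambda$ and $\langle\|f\|_X\rangle_{I_b}\le 2\lambda$ (for $I_b$ a half of an $I\in\mathcal J$) follow by applying the Lebesgue differentiation theorem and the CZ stopping-time bound to the scalar function $\|f\|_X$.

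For the good part, I first note that $h_{I_b}$ is supported in $I_b\subseteq I\subseteq K$, so $\langle h_{I_b},g\rangle=\langle h_{I_b},g\mathbf 1_K\rangle$. Lemma \ref{lem:HaarShift} applied to $g\mathbf 1_K$ then yields
\begin{equation*}
\NOrm \sum_{\substack{I\in\mathcal J\\I\subseteq K}}h_{I_a}\langle h_{I_b},g\rangle.L^p(\R;X).
\lesssim \|g\mathbf 1_K\|_{L^p(\R;X)}\le\|g\|_\infty|K|^{1/p}\lesssim\lambda|K|^{1/p}.
\end{equation*}

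For the bad part, the main step is a dyadic case analysis of $\langle h_{I_b},b_{\tilde J}\rangle$ for $I\in\mathcal J,\ I\subseteq K$, and $\tilde J$ a maximal CZ interval. Using the above nesting property, $\tilde J\cap I_b\neq\emptyset$ forces either $\tilde J\subseteq I_b$ or $I_b\subseteq\tilde J\subseteq I$. If $\tilde J\subsetneq I_b$ then $\tilde J$ lies in one dyadic half of $I_b$, on which $h_{I_b}$ is constant, so the mean-zero of $b_{\tilde J}$ gives zero. If $I_b\subsetneq\tilde J\subseteq I$, comparing sizes forces $\tilde J=I\subseteq\Omega$, contradicting $I\in\mathcal J$; in particular the whole bad part vanishes when $b=0$. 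The only surviving case is $\tilde J=I_b$ (hence $b\in\{\ell,r\}$), and using $\int h_{I_b}=0$ one replaces $b_{I_b}$ by $f$, obtaining
\begin{equation*}
  \textnormal{BAD}=\sum_{\substack{I\in\mathcal J,\ I\subseteq K\\ I_b\in\operatorname{max}(\Omega)}} h_{I_a}\langle h_{I_b},f\rangle.
\end{equation*}

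To bound BAD, I would apply Lemma \ref{lem:HaarShift} once more to replace the $h_{I_a}$'s by $h_{I_b}$'s. Crucially, the surviving indices have $I_b$ a maximal element of $\Omega$, so these $I_b$'s are pairwise disjoint subsets of $K$, and the corresponding terms $h_{I_b}\langle h_{I_b},f\rangle$ have pairwise disjoint supports. Thus
\begin{equation*}
\|\textnormal{BAD}\|_{L^p(\R;X)}^p
\lesssim \sum_{I_b}\|h_{I_b}\|_{L^p}^p\,\|\langle h_{I_b},f\rangle\|_X^p,
\end{equation*}
and combining $\|h_{I_b}\|_{L^p}=|I_b|^{1/p-1/2}$ with $\|\langle h_{I_b},f\rangle\|_X\le|I_b|^{1/2}\langle\|f\|_X\rangle_{I_b}\le 2\lambda|I_b|^{1/2}$ gives each summand $\lesssim(\lambda|I_b|^{1/p})^p$; summing over the disjoint $I_b\subseteq K$ yields $\|\textnormal{BAD}\|_{L^p}\lesssim\lambda|K|^{1/p}$. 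The main obstacle is really the combinatorial case analysis pinning down exactly when $\langle h_{I_b},b_{\tilde J}\rangle$ can be nonzero; once that is resolved, both pieces are handled uniformly by two invocations of Lemma \ref{lem:HaarShift}.
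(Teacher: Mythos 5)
Your proof is correct, but it takes a genuinely different route from the paper's. The paper does not decompose $f$: it observes that the sum over $\{I\in\mathcal J: I\subseteq K\}$ is exactly $T(f\ind_{\cup\mathcal J^*(K)})$, where $T$ is a Haar shift of the type in Lemma~\ref{lem:HaarShift} and $\mathcal J^*(K)$ are the maximal intervals of $\mathcal J$ inside $K$; it then upgrades the $L^p$-boundedness of $T$ to $T:L^1\to L^{1,\infty}$ by the standard Calder\'on--Zygmund method, bounds $\|f\ind_{\cup\mathcal J^*(K)}\|_{L^1}\le\sum_{J\in\mathcal J^*(K)}|J|\inf_J Mf\le\lambda|K|$, and finally bootstraps the resulting weak-type estimate, valid for every dyadic $K$, to the stated $L^p$ bound via the John--Nirenberg--Str\"omberg inequalities. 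You instead run the Calder\'on--Zygmund decomposition on $f$ itself at height $\lambda$, dispatch the good part by a direct application of Lemma~\ref{lem:HaarShift} together with $\|g\|_\infty\lesssim\lambda$, and show by a dyadic case analysis (resting on the fact that no $I\in\mathcal J$ is contained in $\{Mf>\lambda\}$) that the bad part survives only when $I_b$ coincides with a maximal Calder\'on--Zygmund interval, yielding a disjointly supported sum that is estimated by hand. Both arguments hinge on Lemma~\ref{lem:HaarShift}; yours trades the weak-type/John--Nirenberg bootstrap for the combinatorial analysis of $\langle h_{I_b},b_{\tilde J}\rangle$, which makes it somewhat more self-contained. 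The only point you should state explicitly is the (harmless, and equally implicit in the paper) qualitative assumption that $\{Mf>\lambda\}$ has finite measure, so that the maximal dyadic intervals $\tilde J$ actually exist.
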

%%%%%%%%%%%%%%%%%%%%%%%%%%%%%% LEMMA  LEMMA LEMMA
%%%%%%%%%%%%%%%%%%%%%%%%%%%%%% LEMMA  LEMMA LEMMA

%%%%%%%%%%%%%%%%%%%%%%%%%%%%%% PROOF PROOF PROOF
%%%%%%%%%%%%%%%%%%%%%%%%%%%%%% PROOF PROOF PROOF
\begin{proof}
(This is essentially like  \cite[Lemma 7.2]{12020209}	).
We fix any combination $a,b\in\{0,l,r\}$ and set
	\begin{align*}
		\tilde f\eqdef \sum_{I\in \mathcal J} \ip f,h_{I_a},h_{I_b}.
	\end{align*}
Denote by $\mathcal J^*(K)$ the maximal elements $I\in\mathcal J$ with $I\subseteq K$. We have
\begin{equation}\label{eq:splitImpSqFn}
  (\ind_K-\Exp_K)\tilde{f}
  =\sum_{\substack{I\in\mathcal J\\ I_b\subseteq K}} \ip f,h_{I_a},h_{I_b} 
  =\sum_{\substack{I\in\mathcal J\\ I\subseteq K}} \ip f,h_{I_a},h_{I_b} 
     +\ind_{\{l,r\}}(b)\ind_K\sum_{\substack{I\in\mathcal J\\ I=K^{(1)}}} \ip f,h_{I_a},h_{I_b},
\end{equation}
and
\begin{equation*}
  \sum_{\substack{I\in\mathcal J\\ I\subseteq K}} \ip f,h_{I_a},h_{I_b} 
 =\sum_{J\in\mathcal J^*(K)}\sum_{\substack{J\in\mathcal J\\ I\subseteq J}} \ip f \ind_{\cup \mathcal J^*(K)},h_{I_a},h_{I_b}
 =T(f \ind_{\cup \mathcal J^*(K)}),
\end{equation*}
where $T$ is an operator of the form considered in Lemma~\ref{lem:HaarShift}. That Lemma showed the boundedness on $L^p(\R;X)$, from which $T:L^1(\R;X)\to L^{1,\infty}(\R;X)$ follows by the standard Calder\'on--Zygmund method. Hence
\begin{equation*}
  \|T(f \ind_{\cup \mathcal J^*(K)})\|_{L^{1,\infty}(\R;X)}
  \lesssim\|f \ind_{\cup \mathcal J^*(K)}\|_{L^{1}(\R;X)}
  =\sum_{J\in\mathcal J^*(K)}\|f\ind_J\|_{L^{1}(\R;X)}
  \leq\sum_{J\in\mathcal J^*(K)}\abs{J}\inf_J Mf\leq\lambda\abs{K}.
\end{equation*}
The last sum in \eqref{eq:splitImpSqFn} has at most one term, for which the analogous estimate is immediate from
\begin{equation*}
  \abs{\ip f, h_{I_a}, h_{I_b}}
  \lesssim\abs{I}^{1/2}\fint_I \abs{f}dx\frac{1}{\abs{I}^{1/2}}\lesssim\lambda,
\end{equation*}
since $I\in\mathcal J$.

This shows that
\begin{equation*}
  \| (\ind_K-\Exp_K)\tilde{f}\|_{L^{1,\infty}(\R;X)}\lesssim\lambda\abs{K}
\end{equation*}
for all dyadic $K$, a weak formulation of the BMO condition, which by the John--Nirenberg--Str\"omberg inequalities bootstraps to
\begin{equation*}
  \| (\ind_K-\Exp_K)\tilde{f}\|_{L^{p}(\R;X)}\lesssim\lambda\abs{K}^{1/p},
\end{equation*}
which was the claim.
\end{proof}
%%%%%%%%%%%%%%%%%%%%%%%%%%%%%% PROOF PROOF PROOF
%%%%%%%%%%%%%%%%%%%%%%%%%%%%%% PROOF PROOF PROOF

%%%%%%%%%%%%%%%%%%%%%%%%%%%%%% COROLLARY  COROLLARY COROLLARY
%%%%%%%%%%%%%%%%%%%%%%%%%%%%%% COROLLARY  COROLLARY COROLLARY
\begin{corollary}\label{cor:impSqFn}
Let $\mathcal  J \subseteq \{ I \in \mathcal D: \inf _I Mf \leq \lambda \}$ be a finite collection of dyadic intervals, let $\T$ be a $u$-tree, and $v\in\{0,1,2,3\}\setminus\{u\}$.
Then
\begin{align*}
\Norm \sum_{\substack{P\in\T\\ I_P\in\mathcal J}}\ip f,w_{P_v}, w_{P_v}.L^p(\RR;X). \lesssim \lambda |K|^\frac{1}{p}.
\end{align*}
\end{corollary}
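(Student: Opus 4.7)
The plan is to reduce the claim to the scalar Haar-shift estimate of Lemma~\ref{l.size<1} by means of Lemma~\ref{l.haarshift} and Remark~\ref{rem:WalshVsHaar}. Throughout, I read the sum as implicitly restricted to $I_P\subseteq K$, in accordance with the hypothesis of Lemma~\ref{l.size<1}.

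The first step is to apply Lemma~\ref{l.haarshift} to the $u$-tree $\T$ with a fixed top $T$. Since $v\neq u$, that lemma supplies a fixed integer $m\in\{1,2,3\}$, depending only on $u$ and $v$, and signs $\epsilon_{PT}\in\{-1,+1\}$ such that
\begin{align*}
w_{P_v}=\epsilon_{PT}\, w_{T_u}^{\infty}\, w_{I_P,m},\qquad P\in\T.
\end{align*}
Setting $g\eqdef f\cdot w_{T_u}^{\infty}$ and using $\epsilon_{PT}^2=1$, this transforms the sum of interest into
\begin{align*}
w_{T_u}^{\infty}\sum_{\substack{P\in\T\\ I_P\in\mathcal{J},\ I_P\subseteq K}}\ip g,w_{I_P,m}, w_{I_P,m}.
\end{align*}
Multiplication by $w_{T_u}^{\infty}$ does not change the $L^p(\RR;X)$ norm because $|w_{T_u}^{\infty}|=\ind_{I_T}$ and the remaining sum is supported in $I_T$. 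The pointwise bound $|g|\leq|f|$ gives $Mg\leq Mf$, so the hypothesis $\mathcal{J}\subseteq\{I\in\mathcal{D}:\inf_I Mf\leq\lambda\}$ is inherited by $g$ without modification.

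The second step expands the remaining Walsh packets. The map $P\mapsto I_P$ is injective on a $u$-tree: two quartiles with a common time interval $I_P$ would have $u$-tiles $P_u$, $P_u'$ of equal length $|I_P|^{-1}$, both containing $\omega_{T_u}$, hence identical, forcing the two quartiles to coincide. Thus the surviving sum is indexed by a subfamily $\mathcal{J}'\subseteq\mathcal{J}$ of dyadic intervals. By Remark~\ref{rem:WalshVsHaar}, each $w_{I,m}$ with $m\in\{1,2,3\}$ is a fixed finite linear combination of Haar functions $h_{I_a}$ with $a\in\{0,\ell,r\}$; expanding accordingly decomposes the sum into finitely many expressions of the form
\begin{align*}
\sum_{\substack{I\in\mathcal{J}'\\ I\subseteq K}}\ip g,h_{I_b}, h_{I_a}.
\end{align*}
Lemma~\ref{l.size<1}, applied to each such expression with $g$ in place of $f$, bounds each in $L^p(\RR;X)$ by $\lesssim\lambda|K|^{1/p}$, and the triangle inequality yields the stated inequality.

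I do not anticipate a genuine obstacle: the entire content of the argument lies in the reduction of the first step, which amounts to verifying that passing from $f$ to $g$ and peeling off $w_{T_u}^{\infty}$ preserves both the $L^p$ norm and the maximal-function hypothesis. Both are immediate from $|w_{T_u}^{\infty}|=\ind_{I_T}$ and $|g|\leq|f|$, so beyond this bookkeeping the corollary is a direct consequence of Lemma~\ref{l.size<1}.
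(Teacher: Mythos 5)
Your argument is correct and is essentially the paper's own proof: the paper likewise rewrites $\ip f,w_{P_v}, w_{P_v}=w^\infty_{T_u}\ip fw^\infty_{T_u}, w_{I_P,m}, w_{I_P,m}$ via Lemma~\ref{l.haarshift}, expands $w_{I_P,m}$ in Haar functions by Remark~\ref{rem:WalshVsHaar}, and invokes Lemma~\ref{l.size<1}. Your additional bookkeeping (the implicit restriction $I_P\subseteq K$, the bound $M(fw^\infty_{T_u})\leq Mf$, and the injectivity of $P\mapsto I_P$ on a $u$-tree) only makes explicit what the paper leaves tacit.
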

%%%%%%%%%%%%%%%%%%%%%%%%%%%%%% COROLLARY  COROLLARY COROLLARY
%%%%%%%%%%%%%%%%%%%%%%%%%%%%%% COROLLARY  COROLLARY COROLLARY

%%%%%%%%%%%%%%%%%%%%%%%%%%%%%% PROOF PROOF PROOF
%%%%%%%%%%%%%%%%%%%%%%%%%%%%%% PROOF PROOF PROOF
\begin{proof}
We have $\ip f,w_{P_v}, w_{P_v}=w^\infty_{T_u}\ip fw^\infty_{T_u}, w_{I_P,m}, w_{I_P,m}$ for $m=m(u,v)\in\{1,2,3\}$. The claim is then immediate from the previous lemma and Remark~\ref{rem:WalshVsHaar}.
\end{proof}

%%%%%%%%%%%%%%%%%%%%%%%%%%%%%% PROOF PROOF PROOF
%%%%%%%%%%%%%%%%%%%%%%%%%%%%%% PROOF PROOF PROOF

In the current and the following section we prove \emph{generalized restricted weak-type} estimates for the trilinear form $\Lambda$ in the spirit of \cite{MR1887641} and \cite{Thiele:book}. Let $\alpha=(\alpha_1,\alpha_2,\alpha_3)$ be a triple of real numbers with $a_v\leq 1$ for all $v\in\{1,2,3\}$. We will say that $\Lambda$ is of generalized restricted weak type $\alpha$ if for all triples $(E_1,E_2,E_3)$ of measurable sets of finite measure there is an index $j\in\{1,2,3\}$ and a subset $E_j '\subset E_j$ with $| E_j '|\geq \frac{1}{2}|E_j|$, that is \emph{a major subset}, such that
\begin{align*}
	|\Lambda(f_1,f_2,f_3)|\lesssim | E_1|^{\alpha_1} | E_2 |^{\alpha_2} |E_3|^{\alpha_3},
\end{align*} 
for all measurable functions $f_1,f_2,f_3$ with $|f_v|\leq \ind_{E' _v}$ for all $v\in\{1,2,3\}$. Here we have set $E'  _u\eqdef E_u$ for $u\in\{1,2,3\}\setminus\{j\}$.

The reader is referred to \cite{MR1887641} and \cite{Thiele:book} for more details. A general guideline to keep in mind is the following. Suppose that the trilinear form $\Lambda$ is of generalized restricted weak type $\beta$, with $\sum_v \beta_v=1$, for all $\beta$ in a neighborhood of a point $(\alpha_1,\alpha_2,\alpha_3)$ with $\sum_v \alpha_v=1$. If $\alpha_j>0$ for all $j\in\{1,2,3\}$ then $\Lambda$ satisfies the corresponding strong bounds:
\begin{align*}
	|\Lambda(f_1,f_2,f_3)|\lesssim \prod_{v=1} ^3 \norm f_v.L^{1/\alpha_j}(\RR;X_v).
\end{align*}
If exactly one of the exponents satisfies $\alpha_j\leq 0$, then the dual form in $j$ satisfies
\begin{align*}
	\norm B_j (f_u,f_\tau).L^{1/(1-\alpha_j)}(\RR;X_j).\lesssim \norm f_u .L^{1/{\alpha_u}}(\RR;X_u). \norm f_\tau.L^{1/{\alpha_\tau}}(\RR;X_\tau).,
\end{align*}
where $u,\tau\in\{1,2,3\}\setminus\{j\}$.

We first prove the local $L^{q_1}\times L^{q_2}$ estimates of the main theorem. Observe that this corresponds to the local $L^2$ case of the scalar theorem. By interpolation the following lemma implies Theorem \ref{t.main-local}.
%%%%%%%%%%%%%%%%%%%%%%%%%%%%%% LEMMA  LEMMA LEMMA:restricted weak type
%%%%%%%%%%%%%%%%%%%%%%%%%%%%%% LEMMA  LEMMA LEMMA
\begin{lemma}\label{l.above} Suppose that the UMD Banach spaces $X_1,X_2, X_3$ have quartile types $q_1, q_2,q_3$ respectively with
\begin{align*}
\rho\eqdef \frac{1}{q_1}+\frac{1}{q_2}+\frac{1}{q_3}-1>0.
\end{align*}
 Let $E_1,E_2,E_3$ be  measurable sets of finite measure in $\R$, and let
\begin{align*}
0<\alpha_v<\frac{1}{q_v}\quad\mbox{for all}\quad v\in\{1,2,3\}	,\quad \sum_{v=1} ^3 \alpha_v=1.
\end{align*}
Let $E_\tau$ have maximal measure. There is a major subset $E_\tau '$ of $E_\tau$ such that
\begin{align*}
\abs{\Lambda(f_1,f_2,f_3)}\lesssim \prod_{v=1} ^3 |E_v|^{\alpha_v}
\end{align*}
whenever $|f_v|\leq \ind_{E' _v}$ for all $v\in\{1,2,3\}$. Here, $E_\tau '$ is specified and for $v\in\{1,2,3\}\setminus\{\tau\}$, we set $E_v ' = E_v$.
\end{lemma}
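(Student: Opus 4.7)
The plan is to follow the standard time-frequency strategy for generalized restricted weak-type estimates: build a major subset $E_\tau'$ from an exceptional set involving Hardy--Littlewood maximal functions of the $\ind_{E_v}$, reduce to the quartiles meeting its complement, upgrade the resulting pointwise bounds to a priori size estimates, and combine these with the size lemma and tree lemma of Sections~\ref{s.tree}--\ref{s.size}.

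First, since $|E_\tau|$ is maximal, define
\[
\Omega:=\bigcup_{v\neq\tau}\{x:M\ind_{E_v}(x)>K|E_v|/|E_\tau|\},\qquad E_\tau':=E_\tau\setminus\Omega.
\]
The weak-$(1,1)$ bound for $M$ yields $|\Omega|\leq\tfrac12|E_\tau|$ for $K$ sufficiently large, so $E_\tau'$ is a major subset. Because $f_\tau$ is supported in $\Omega^c$, any quartile $P$ with $I_P\subseteq\Omega$ satisfies $\ip f_\tau,w_{P_\tau},=0$ and can be discarded; write $\P^*$ for the remaining sub-collection. Every $P\in\P^*$ contains a point in $I_P\setminus\Omega$, forcing $\inf_{I_P} M\ind_{E_v}\lesssim|E_v|/|E_\tau|$ for $v\neq\tau$. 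Corollary~\ref{cor:impSqFn} combined with the BMO-type equivalence of sizes (Lemma~\ref{l.bmosize}) then produces the a priori bounds
\[
\size_v(\P^*)\lesssim|E_v|/|E_\tau|\quad(v\neq\tau),\qquad \size_\tau(\P^*)\lesssim 1.
\]

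Next, Proposition~\ref{p.quartiletype} allows every $X_v$ to be regarded as of quartile type $p_v$ for any $p_v\in[q_v,\infty)$. Since $\alpha_v<1/q_v$, we choose $p_v\in[q_v,1/\alpha_v)$ strictly, and Corollary~\ref{c.decomp} then decomposes $\P^*=\bigcup_{n,j}\T_{n,j}$ up to a residual with all sizes zero (and hence zero $\Lambda$-contribution by Lemma~\ref{l.tree}), with convex trees satisfying $\size_v(\T_{n,j})\leq 2^{n/p_v}|E_v|^{1/p_v}$ and $\sum_j|I_{T_{n,j}}|\lesssim 2^{-n}$. The tree lemma yields
\[
\sum_j|\Lambda_{\T_{n,j}}|\lesssim 2^{-n}\prod_{v=1}^{3}\min\bigl(2^{n/p_v}|E_v|^{1/p_v},\,S_v\bigr),
\]
with $S_v=|E_v|/|E_\tau|$ for $v\neq\tau$ and $S_\tau=1$.

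The main obstacle is summing this estimate in $n$. The matching choice $p_v=1/\alpha_v$ lands on the critical line $\sum 1/p_v=1$, where the pure size-lemma contribution is a logarithmically divergent geometric series; the strict inequality $p_v<1/\alpha_v$ instead enforces $\sum 1/p_v>1$, making the size-lemma branch geometrically summable as $n\to-\infty$, while the explicit $2^{-n}$ factor handles the $n\to+\infty$ tail once each $\min$ has saturated at $S_v$. Normalizing $|E_\tau|=1$ by scale invariance (so $|E_v|\leq 1$ throughout), each piecewise regime of $\prod_v\min(\cdot,\cdot)$ contributes a peak of the form $\prod_{v\neq\tau}|E_v|^{\beta_v}$ with $\beta_v\geq 1/p_v>\alpha_v$; the inequality $|E_v|^{\beta_v}\leq|E_v|^{\alpha_v}$ then completes the proof of $|\Lambda(f_1,f_2,f_3)|\lesssim\prod_v|E_v|^{\alpha_v}$. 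The delicate aspect is tracking the exponents $\beta_v$ across all piecewise regimes of the min and at the crossover levels $n_v^*:=p_v\log_2(S_v/\|f_v\|_{p_v})$.
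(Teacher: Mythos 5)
Your setup coincides with the paper's: the exceptional set built from $M\ind_{E_v}$, the reduction to quartiles with $I_P\nsubseteq\Omega$, the a priori bound $\size_v\lesssim|E_v|/|E_\tau|$ via Corollary~\ref{cor:impSqFn}, the decomposition of Corollary~\ref{c.decomp}, and the tree lemma all appear exactly as in the paper, leading to the same master sum $\sum_n 2^{-n}\prod_v\min(2^{n/p_v}|E_v|^{1/p_v},S_v)$. (The detour through Proposition~\ref{p.quartiletype} to raise the types to $p_v\in[q_v,1/\alpha_v)$ is unnecessary: the hypothesis $\rho>0$ already gives $\sum_v 1/q_v>1$, so $p_v=q_v$ works; but this is harmless.)

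The gap is in your final summation. You assert that each piecewise regime peaks at a value $\prod_v|E_v|^{\beta_v}$ with $\beta_v\geq 1/p_v>\alpha_v$, and conclude via $|E_v|^{\beta_v}\leq|E_v|^{\alpha_v}$. This is false at the intermediate crossover levels. Normalizing $|E_\tau|=1$ and ordering the crossovers $n_{v_1}^*\leq n_{v_2}^*\leq n_\tau^*=0$, the summand at $n=n_{v_2}^*$ equals $|E_{v_1}|\,|E_{v_2}|^{1-(p_{v_2}-1)(1-1/p_\tau)}\,$, and the exponent of $|E_{v_2}|$ can be negative (e.g.\ $p_{v_2}=p_\tau=2.9$ gives $1-1.9\cdot(1-1/2.9)<0$), so it is certainly not $\geq 1/p_{v_2}$. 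Salvaging the breakpoint analysis requires invoking the ordering constraints $|E_{v_1}|^{p_{v_1}-1}\leq|E_{v_2}|^{p_{v_2}-1}\leq 1$ among the sets — this is precisely the delicate case analysis the paper reserves for the ``below quartile type'' lemma of \S\ref{s.below}, and it also forces you to rule out the exactly critical intermediate regimes where $\sum_{v\in S}1/p_v=1$ produces a logarithm. For the present lemma none of this is needed: the paper simply discards the minimum, bounding the summand by $2^{-n}\prod_v|E_v|/|E_\tau|$ for $2^{-n}\leq\delta$ and by $2^{n\rho}\prod_v|E_v|^{1/q_v}$ for $2^n\leq\delta^{-1}$, sums the two genuinely geometric series (using $\rho>0$ for the second), and chooses the single free threshold $\delta\simeq|E_\tau|$ at the end; the hypotheses $|E_v|\leq|E_\tau|$ and $\alpha_v<1/q_v$ then absorb both terms into $\prod_v|E_v|^{\alpha_v}$. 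You should replace your regime-by-regime argument with this two-term split, or else carry out the full breakpoint verification with the ordering constraints made explicit.
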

%%%%%%%%%%%%%%%%%%%%%%%%%%%%%% LEMMA  LEMMA LEMMA
%%%%%%%%%%%%%%%%%%%%%%%%%%%%%% LEMMA  LEMMA LEMMA

%%%%%%%%%%%%%%%%%%%%%%%%%%%%%% PROOF  PROOF PROOF
%%%%%%%%%%%%%%%%%%%%%%%%%%%%%% PROOF  PROOF PROOF
\begin{proof} Let $F$ be the set
\begin{align*}
F\eqdef \bigcup_{v=1} ^3 \big \{M \textbf 1_{E_v}>8|E_v|/|E_\tau| \big \}.
\end{align*}
Then $|F|\leq  |E_\tau|/2$ thus we can take $E_\tau ' \eqdef E_\tau \setminus F$ for our major subset. For $f_1,f_2,f_3$ as in the statement of the lemma we have
\begin{align*}
\Lambda(f_1,f_2,f_3)=\sum_{ P\in\P}\frac{1}{|I_P|^\frac{1}{2}}\ip f_1,w_{P_1},\cdot \ip f_2,w_{P_2}, \cdot \ip f_3,w_{P_3},=\sum_{\stackrel {P\in \P}{I_P\subseteq F}} +\sum_{\stackrel {P\in \P}{I_P\nsubseteq F}}.
\end{align*}
The first sum vanishes since $w_{P_\tau}$ is supported on $I_P\subseteq F$ and $f_\tau$ on $E_\tau '\subseteq F^c$. For the second sum we will need an additional upper bound on the size of any collection $\P'\subset \{P\in\P:I_P\nsubseteq F\}$. To estimate the $v$-size we fix some $v\in\{1,2,3\}$ and let $\T\subset \P'$ be any $u$-tree with $0\leq u \neq v\leq 3$. Let $T$ be the top of $\T$.

For any $P\in\T$ we have 
\begin{align*}
	\inf_{I_P}M(f_v w_{T_u } ^\infty)\leq \inf_{I_P}M(\textbf 1_{E_v})\leq 8 |E_v|/|E_\tau|,
\end{align*}
since $I_P\nsubseteq F$. By Corollary~\ref{cor:impSqFn} applied for $p=q_v$ and the previous two estimates we get
\begin{align*}
	\int\Abs{\sum_{P\in \T}\ip f_v,w_{P_v},w_{P_v}(x)}^{q_v} dx \lesssim \bigg( \frac{|E_v|}{|E_\tau|}\bigg)^{q_v}|I_T|.
\end{align*}
Thus $\size_v(\P')\lesssim \frac{|E_v|}{|E_\tau|} $ for all $v\in\{1,2,3\}$.

Hence
\begin{align*}
\Lambda(f_1,f_2,f_3)=\Lambda_{\P'}(f_1,f_2,f_3)\eqdef\sum_{ P\in \P'}\frac{1}{|I_P|^\frac{1}{2}}\ip f_1,w_{P_1},\cdot \ip f_2,w_{P_2}, \cdot \ip f_3,w_{P_3},.
\end{align*}
By Corollary \ref{c.decomp} and Lemma \ref{l.size<1} we can now estimate as follows:
\begin{align*}
\abs{\Lambda _{\P'}	 (f_1,f_2,f_3)} &\leq \sum_{n\in\mathbb Z} \sum_j \abs{\Lambda_{\T_{n,j}}(f_1,f_2,f_3)}  \leq  \sum_{n\in\mathbb Z} \sum _j  |I_{T_{n,j}}| \prod_{v=1} ^3 \size_v (\T_{n,j})
\\
&\lesssim \sum_{n\in\mathbb Z} 2^{-n} \prod_{v=1} ^3 \min(\frac{|E_v|}{|E_\tau|},2^\frac{n}{q_v}\norm f_v.L^{q_v}(\RR;X). )
\\
&=\sum_{n\in\mathbb Z} 2^{-n} \prod _{v=1} ^3 \min(\frac{|E_v|}{|E_\tau|},2^\frac{n}{q_v} |E_v|^\frac{1}{q_v})
\\
&\leq 	 \prod_{v=1} ^3\frac{ |E_v|}{|E_\tau|} \sum_{2^{-n}\leq \delta } 2^{-n} +\prod _{v=1} ^3  |E_v|^\frac{1}{q_v} \sum_{2^n\leq\delta^{-1}} 2^{n\rho},
\end{align*}
where $\delta$ is some real number to be chosen later and we have replaced $\rho= q_1 ^{-1}+q_2 ^{-1}+q_3 ^{-1}-1$. By our hypotheses we have $0<\rho\leq \frac{1}{2}$ where $\rho=\frac{1}{2}$ corresponds to the Hilbert-space case $q_1=q_2=q_3=2$. Using that $|E_v|\leq |E_\tau|$ and $\frac{1}{q_v}>\alpha_v$ for all $v\in\{1,2,3\}$ we get
\begin{align*}
\abs{\Lambda _{\P_1}	 (f_1,f_2,f_3)} &\lesssim \prod_{v=1} ^3 |E_v|^{\alpha_v} \bigg(\delta \prod_{v=1} ^3\frac{|E_v|^{1-\alpha_v}}{|E_\tau|}+\delta^{-\rho} \prod_{v=1} ^3 |E_v|^{\frac{1}{q_v}-\alpha_v}\bigg)
\\
&\leq \prod_{v=1} ^3 |E_v|^{\alpha_v} \bigg(\frac{\delta}{|E_\tau|} +\delta^{-\rho}|E_\tau|^\rho\bigg).
\end{align*}
The obvious choice $\delta\simeq |E_\tau|$ now gives the desired estimate.
\end{proof}
%%%%%%%%%%%%%%%%%%%%%%%%%%%%%% PROOF  PROOF PROOF
%%%%%%%%%%%%%%%%%%%%%%%%%%%%%% PROOF  PROOF PROOF

%%%%%%%%%%%%%%%%%%%%%%%%%%%%%% REMARK  REMARK REMARK
%%%%%%%%%%%%%%%%%%%%%%%%%%%%%% REMARK  REMARK REMARK
\begin{remark} Observe that our estimates and choice of $\delta$ were convenient but by no means optimal. This space for improvement will be exploited in the next section were we prove estimates `below the quartile types'.
\end{remark}
%%%%%%%%%%%%%%%%%%%%%%%%%%%%%% REMARK  REMARK REMARK
%%%%%%%%%%%%%%%%%%%%%%%%%%%%%% REMARK  REMARK REMARK

%%%%%%%%%%%%%%%%%%%%%%%%%%%%%% SECTION  SECTION SECTION
%%%%%%%%%%%%%%%%%%%%%%%%%%%%%% SECTION  SECTION SECTION
\section{The restricted weak type below the quartile types}   \label{s.below}
Obtaining estimates for the trilinear operator `below' the quartile type requires some additional work, as is the case for scalar valued functions and quartile types equal to $2$. Some additional attention is necessary in the vector-valued case since the three Banach spaces have different quartile types $q_1,q_2,q_3$. Our main estimate is the following lemma:

\begin{lemma} Suppose that the UMD Banach spaces $X_1,X_2,X_3$ have quartile types $q_1,q_2,q_3$ respectively. Let $E_1,E_2, E_3$ be measurable sets of finite measure and assume that
\begin{align*}
\bigg(\frac{|E_b|}{|E_\tau|}\bigg)^{q_b-1}\leq \bigg(\frac{|E_a|}{|E_\tau|}\bigg)^{q_a-1} \leq \bigg(\frac{|E_\tau|}{|E_\tau|}\bigg)^{q_\tau-1}=1
\end{align*}
where $a,b,\tau\in\{1,2,3\}$ are pairwise different. Note that in particular $E_\tau$ has maximal measure.  Let $\sum_{v=1} ^3 \beta_v=1$ and
\begin{align*}
\beta_b&<1,
\\
\frac{\beta_b}{q_b-1}&\leq\rho+ \frac{1}{q_b(q_b-1)},
\\
\frac{\beta_b}{q_b-1}+\frac{\beta_a}{q_a-1}&\leq\rho+ \frac{1}{q_b(q_b-1)}+\frac{1}{q_a(q_a-1)}.
\end{align*}
As before we assume that $\rho\eqdef \frac{1}{q_1}+\frac{1}{q_2}+\frac{1}{q_3}-1>0$. Then there is a major subset $E_\tau '\subset E_\tau$ such that
\begin{align*}
|\Lambda(f_1,f_2,f_3)|\lesssim |E_a|^{\beta_a} |E_b|^{\beta_b} |E_\tau|^{\beta_\tau},
\end{align*}
for all $|f_v|\leq \ind_{E' _v}$. Here we have set $E' _v =E_v$ for $v\in\{1,2,3\}\setminus\{\tau\}.$
\end{lemma}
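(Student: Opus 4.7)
The proof will follow the template of Lemma~\ref{l.above}: construct an exceptional set $F$ to define a major subset $E_\tau'=E_\tau\setminus F$, obtain a size bound on the complementary convex subcollection $\P'$, and combine Corollary~\ref{c.decomp} with the tree lemma to bound $|\Lambda|$. The new ingredient compared with Lemma~\ref{l.above} is a more refined choice of split threshold in the resulting geometric sum, which exploits the ordering hypothesis $(|E_b|/|E_\tau|)^{q_b-1}\leq(|E_a|/|E_\tau|)^{q_a-1}\leq 1$.

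Setting $r_v\eqdef|E_v|/|E_\tau|$, take $F\eqdef\bigcup_{v=1}^3\{M\ind_{E_v}>Cr_v\}$ for a large absolute constant $C$. Weak-$(1,1)$ of the maximal function yields $|F|\leq|E_\tau|/2$, so $E_\tau'\eqdef E_\tau\setminus F$ is an admissible major subset. As in the proof of Lemma~\ref{l.above}, the contribution to $\Lambda$ of quartiles $P$ with $I_P\subseteq F$ vanishes (since $w_{P_\tau}$ is supported in $I_P\subseteq F$ whereas $f_\tau$ lives in $E_\tau'\subseteq F^c$), and Corollary~\ref{cor:impSqFn} gives $\size_v(\P')\lesssim r_v$ on the remaining convex subcollection $\P'$. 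Corollary~\ref{c.decomp} combined with the tree lemma then yields the master inequality
\[
|\Lambda(f_1,f_2,f_3)|\lesssim \sum_{n\in\Z}2^{-n}\prod_{v=1}^3\min\!\bigl(r_v,\,2^{n/q_v}|E_v|^{1/q_v}\bigr).
\]

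The remainder of the argument shows that this sum is $\lesssim|E_\tau|\cdot r_a^{\beta_a}r_b^{\beta_b}$ (equivalently $\prod_v|E_v|^{\beta_v}$, using $\sum\beta_v=1$ and $r_\tau=1$). My plan is to split the range of $n$ at a threshold $2^{n_0}=\delta$ of the form $\delta=|E_\tau|^{-1}r_a^\kappa r_b^{\lambda}$ with parameters $\kappa,\lambda$ to be chosen. Bounding the head $n\leq n_0$ by $\delta^\rho\prod_v|E_v|^{1/q_v}$ and the tail $n>n_0$ by $\delta^{-1}\prod_v r_v$, each contribution simplifies to an expression of the form $|E_\tau|\cdot r_a^{\eta_a}r_b^{\eta_b}$ for exponents $\eta_a,\eta_b$ linear in $\kappa,\lambda$. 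Comparison with the target gives the sufficient inequality $r_a^{\eta_a-\beta_a}r_b^{\eta_b-\beta_b}\leq 1$; since $r_a,r_b\leq 1$, the naive sufficient condition is $\eta_v\geq\beta_v$, but the ordering hypothesis provides the trade $r_b^{c}\leq r_a^{c(q_a-1)/(q_b-1)}$ valid for $c\geq 0$, permitting excess $r_b$-exponent to be converted into $r_a$-exponent (but not conversely). The three hypothesis conditions on $\beta$ should correspond precisely to the feasibility of this optimization: $\beta_b<1$ controls the tail, $\beta_b\leq(q_b-1)\rho+1/q_b$ controls the head after applying the trade, and the joint inequality $\beta_a/(q_a-1)+\beta_b/(q_b-1)\leq\rho+1/(q_a(q_a-1))+1/(q_b(q_b-1))$ encodes the combined constraint on the two pieces.

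\textbf{Main obstacle.} The technical heart of the proof lies in the optimization over $(\kappa,\lambda)$ together with the trade exponent, and in verifying that the three hypothesis constraints on $\beta$ are the exact feasibility conditions of this optimization. The asymmetry between $a$ and $b$ in the hypothesis reflects the fact that the ordering hypothesis permits only one direction of trade. Careful bookkeeping of signs is needed throughout, in particular to track the dominant endpoint of each geometric sub-sum and to ensure that the trade is applied only to factors with non-negative exponents; sign changes in quantities such as $\rho-1/q_b$ or $1/q_\tau-1$, depending on the quartile types, add subcase analysis but no conceptual difficulty.
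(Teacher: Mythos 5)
Your first half --- the exceptional set $F$, the major subset $E_\tau'=E_\tau\setminus F$, the size bound $\size_v(\P')\lesssim r_v:=|E_v|/|E_\tau|$ from Corollary~\ref{cor:impSqFn}, and the reduction via Corollary~\ref{c.decomp} and the tree lemma to the master inequality
\begin{equation*}
  |\Lambda(f_1,f_2,f_3)|\lesssim\sum_{n\in\Z}2^{-n}\prod_{v=1}^3\min\Bigl(r_v,\,2^{n/q_v}|E_v|^{1/q_v}\Bigr)
\end{equation*}
--- is exactly the paper's argument. The gap is in how you estimate this sum. A single threshold $\delta$ with the uniform-branch bounds $\delta^{\rho}\prod_v|E_v|^{1/q_v}$ (head) and $\delta^{-1}\prod_v r_v$ (tail) is provably too lossy to reach the stated range of $\beta$, no matter how $\delta$, $\kappa$, $\lambda$ and the trade are chosen. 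Test it on the extreme ray $r_a=1$ (i.e.\ $|E_a|=|E_\tau|$, admissible under the ordering hypothesis), where the trade $r_b^{q_b-1}\le r_a^{q_a-1}=1$ degenerates and gives nothing. Writing $\prod_v|E_v|^{1/q_v}=|E_\tau|^{1+\rho}r_b^{1/q_b}$ and $\prod_v r_v=r_b$, and comparing the head and tail with the target $|E_\tau|\,r_b^{\beta_b}$ as $r_b\to0$, you find that even the optimal $\delta$ yields only $|E_\tau|\,r_b^{(\rho+1/q_b)/(1+\rho)}$, forcing $\beta_b\le(\rho+1/q_b)/(1+\rho)$. In the Hilbert-space case $q_1=q_2=q_3=2$, $\rho=1/2$, this caps $\beta_b$ at $2/3$, whereas the lemma asserts the full range $\beta_b<1$ (the constraints there read $\beta_b\le(q_b-1)\rho+1/q_b=1$). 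So the two-piece split cannot prove the lemma; the three hypothesis conditions are \emph{not} the feasibility conditions of your optimization.

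What the paper does instead is sharper: it introduces the crossover points $d_v=(|E_\tau|/|E_v|)^{q_v-1}|E_\tau|$, notes that the hypothesis orders them as $d_\tau\le d_a\le d_b$, and splits the sum into \emph{four} ranges $2^n\le d_\tau$, $d_\tau\le 2^n\le d_a$, $d_a\le 2^n\le d_b$, $d_b\le 2^n$, according to which of the three minima have switched branch. Each piece is then a geometric series evaluated at its dominant endpoint; e.g.\ on $d_\tau\le2^n\le d_a$ one keeps the branch $r_ar_b$ for the $a,b$ factors but the branch $2^{-n/q_\tau}|E_\tau|^{1/q_\tau}$ for the $\tau$ factor. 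These mixed-branch contributions are exactly what the uniform head/tail bound discards, and they produce the boundary exponents of the hexagon. You would also need to treat the degenerate range where the geometric ratio equals $1$ (the case $1/q_a+1/q_\tau=1$), which yields a logarithm of $d_b/d_a$ and is absorbed with an $\epsilon$-loss; this is the reason the condition $\beta_b<1$ is strict. Neither the four-way case analysis nor the logarithmic case appears in your plan, so as written the proposal does not yield the lemma.
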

%%%%%%%%%%%%%%%%%%%%%%%%%%%%%% PROOF PROOF PROOF
%%%%%%%%%%%%%%%%%%%%%%%%%%%%%% PROOF PROOF PROOF
\begin{proof} With the notations as in the proof of Lemma \ref{l.above} we have the main estimate
\begin{align*}
|\Lambda_{P'} (f_1,f_2,f_3)| \lesssim \sum_{n\in\mathbb Z} 2^{n} \prod _{v=1} ^3 \min(\frac{|E_v|}{|E_\tau|},2^{-\frac{n}{q_v}} |E_v|^\frac{1}{q_v}).
\end{align*}
Remember that $E_\tau$ has maximal measure. We now estimate the sum more carefully. For this it will be helpful to define
\begin{align*}
d_v\eqdef \bigg(\frac{|E_\tau|}{|E_v|}\bigg)^{q_v-1}|E_\tau|,\quad v\in\{1,2,3\}.
\end{align*}
Observe that our hypothesis translates to $d_b\geq d_a\geq d_\tau=|E_\tau|$. Also for any $v\in\{1,2,3\}$ we have that
\begin{align*}
\frac{|E_v|}{|E_\tau|}\leq 2^{-\frac{n}{q_v}} |E_v|^\frac{1}{q_v}\Leftrightarrow 2^n \leq d_v.
\end{align*}

We now split the sum according to the optimal value in the minimum:
\begin{align*}
|\Lambda_{P'}|\leq \sum_{2^n \leq d_\tau}+\sum_{d_\tau\leq 2^n\leq d_a}+\sum_{d_a\leq 2^n \leq d_b}+\sum_{d_b\leq 2^n}	\eqdef I+II+III+IV.
\end{align*}
The first term is the simplest:
\begin{align*}
	I\simeq \frac{|E_a|}{|E_\tau|}\frac{|E_b|}{|E_\tau|}d_\tau=\frac{|E_a||E_b|}{|E_\tau|}\leq \min (|E_a|, |E_b|)\leq|E_1|^{\alpha_1}|E_2|^{\alpha_2} |E_3|^{\alpha_3}.
\end{align*}
For $II$ we have
\begin{align*}
	II&\simeq   \frac{|E_a|}{|E_\tau|}\frac{|E_b|}{|E_\tau|} |E_\tau|^\frac{1}{q_\tau}d_a ^{1-\frac{1}{q_\tau}}=\frac{|E_a|}{|E_\tau|}\frac{|E_b|}{|E_\tau|} |E_\tau|^\frac{1}{q_\tau}\bigg(\frac{|E_\tau|}{|E_a|}\bigg)^{(q_a-1)(1-\frac{1}{q_\tau})}|E_\tau|^{1-\frac{1}{q_\tau}}
\\
&=\bigg(\frac{|E_b|}{|E_\tau|}\bigg)^{(q_b-1)\frac{1}{q_b - 1} } \bigg(\frac{|E_a|}{|E_\tau|}\bigg)^{(q_a-1)(\frac{1}{q_a-1}+\frac{1}{q_\tau}-1)} |E_\tau|
\end{align*}
Thus 
\begin{align*}
	II& \lesssim |E_a|^{\beta_a}|E_b|^{\beta_b}|E_\tau|^{\beta_\tau}\Leftrightarrow II\lesssim \bigg(\frac{|E_b|}{|E_\tau|}\bigg)^{(q_b-1)\frac{\beta_b}{q_b-1}}\bigg(\frac{|E_a|}{|E_\tau|}\bigg)^{(q_a-1)\frac{\beta_a}{q_a-1}}|E_\tau|
	\\
	&\Leftrightarrow \begin{cases} \beta_b \leq 1, \quad\mbox{\small and}\\  \frac{\beta_a}{q_a-1}+\frac{\beta_b}{q_b-1} \leq \frac{1}{q_a-1}+\frac{1}{q_b-1}+\frac{1}{q_\tau}-1.\end{cases}
\end{align*}
The estimate for $III+IV$ is slightly different according to the value of $\frac{1}{q_a}+\frac{1}{q_\tau}$. Observe that we always have $\frac{1}{q_a}+\frac{1}{q_\tau}\leq 1$ with equality if and only if $q_a=q_\tau=1/2$ which corresponds to the case that both $X_a$ and $X_\tau$ are Hilbert spaces.

\subsubsection*{Case $\frac{1}{q_a}+\frac{1}{q_\tau}<1$:} Now
\begin{align*}
	III+IV&\simeq \frac{|E_b|}{|E_\tau|} |E_a|^\frac{1}{q_a} |E_\tau|^\frac{1}{q_\tau}  d_b ^{1-\frac{1}{q_a}-\frac{1}{q_\tau}}
+|E_a|^\frac{1}{q_a}|E_b|^\frac{1}{q_b}|E_\tau|^\frac{1}{q_\tau} d_b ^{1-\frac{1}{q_a}-\frac{1}{q_b}-\frac{1}{q_\tau}}
\\
&= \bigg(\frac{|E_a|}{|E_\tau|}\bigg)^\frac{1}{q_a}\bigg( \frac{|E_b|}{|E_\tau|}\bigg)^{1-(q_b-1)(1-\frac{1}{q_a}-\frac{1}{q_\tau})}|E_\tau|
+|E_a|^\frac{1}{q_a}|E_b|^\frac{1}{q_b}	\bigg(\frac{|E_\tau|}{|E_b|}\bigg)^{(q_b-1)(1-\frac{1}{q_a}-\frac{1}{q_b}-\frac{1}{q_\tau})}|E_\tau|^{1-\frac{1}{q_a}-\frac{1}{q_b} }
\\
&\simeq  \bigg(\frac{|E_a|}{|E_\tau|}\bigg)^{(q_a-1)\frac{1}{q_a(q_a-1)}} \bigg(\frac{|E_b|}{|E_\tau|}\bigg)^{(q_b-1)(\frac{1}{q_b-1}+\frac{1}{q_a}+\frac{1}{q_\tau}-1)}|E_\tau|	.
\end{align*}
So we have
\begin{align*}
III+IV & \lesssim |E_a|^{\beta_\alpha} |E_b|^{\beta_b}|E_\tau|^{\beta_\tau}\Leftrightarrow III+IV\lesssim \bigg(\frac{|E_b|}{|E_\tau|}\bigg)^{(q_b-1)\frac{\beta_b}{q_b-1}}\bigg(\frac{|E_a|}{|E_\tau|}\bigg)^{(q_a-1)\frac{\beta_a}{q_a-1}}|E_\tau|
\\
	&\Leftrightarrow\begin{cases} \frac{\beta_b}{q_b-1} \leq  \frac{1}{q_b-1}+\frac{1}{q_a}+\frac{1}{q_\tau}-1, \quad\mbox{\small and}
\\ 
 \frac{\beta_a}{q_a-1}+\frac{\beta_b}{q_b-1}\leq  \frac{1}{q_b-1}+\frac{1}{q_a-1} +\frac{1}{q_\tau}-1.\end{cases}
\end{align*}
Observe that since we assume $\frac{1}{q_a}+\frac{1}{q_\tau}-1<0$  the first condition contains the condition $\beta_b<1$. Also the second condition is the same as the second condition for $II$.
\subsubsection*{Case $\frac{1}{q_a}+\frac{1}{q_\tau}=1$:} Now
\begin{align*}
III&\simeq \frac{|E_b|}{|E_\tau|} |E_a|^\frac{1}{q_a} |E_\tau|^\frac{1}{q_\tau} \log_2 \frac{d_b}{d_a}=\frac{|E_b|}{|E_\tau|} |E_a|^\frac{1}{q_a} |E_\tau|^\frac{1}{q_\tau} \log_2 \bigg[\bigg	(\frac{|E_\tau|}{|E_b|}\bigg)^{q_b-1} \bigg(\frac{|E_a|}{|E_\tau|}\bigg)^{q_a-1}  \bigg]
\\
&\lesssim_\epsilon \bigg(\frac{|E_b|}{|E_\tau|} \bigg)^{(q_b-1)(\frac{1}{q_b-1} -\epsilon)} \bigg(\frac{|E_a|}{|E_\tau|}\bigg)^{(q_a-1)(\frac{1}{q_a(q_a-1)}+\epsilon)} |E_\tau|.
\end{align*}
The last expression is dominated by $|E_a|^{\beta_a}|E_b|^{\beta_b}|E_\tau|^{\beta_\tau}$ if 
\begin{align*}
\begin{cases} \beta_b<1, \quad\mbox{and}
\\ 
 \frac{\beta_a}{q_a-1}+\frac{\beta_b}{q_b-1}\leq  \frac{1}{q_b-1}+\frac{1}{q_a-1} +\frac{1}{q_\tau}-1.\end{cases}
\end{align*}
For $IV$ we get the same estimate as in the case $\frac{1}{q_a}+\frac{1}{q_\tau}<1$.
\end{proof}
%%%%%%%%%%%%%%%%%%%%%%%%%%%%%% PROOF PROOF PROOF
%%%%%%%%%%%%%%%%%%%%%%%%%%%%%% PROOF PROOF PROOF
By considering all permutations in the hypothesis of the previous Lemma we immediately get:
%%%%%%%%%%%%%%%%%%%%%%%%%%%%%% COROLLARY COROLLARY COROLLARY
%%%%%%%%%%%%%%%%%%%%%%%%%%%%%% COROLLARY COROLLARY COROLLARY
\begin{corollary}\label{c.main}Suppose that the UMD Banach spaces $X_1,X_2,X_3$ have quartile type $q_1,q_2,q_3$ respectively, with
\begin{align*}
\rho=\frac{1}{q_1}+\frac{1}{q_2}+\frac{1}{q_3}-1>0.
\end{align*}
Let $E_1,E_2,E_3$ be measurable subsets of the real line of finite measure and assume that $E_\tau$ has maximal measure. Suppose that
 $\sum_v \beta_v=1$ and that for all $v,u\in\{1,2,3\}$ with $v\neq u$ we have
\begin{align*}
\beta_v&<1, 
\\
\frac{\beta_v}{q_v-1}&\leq\rho+ \frac{1}{q_v(q_v-1)},
\\
\frac{\beta_v}{q_v-1}+\frac{\beta_u}{q_u-1}&\leq\rho+ \frac{1}{q_u(q_u-1)}+\frac{1}{q_u(q_u-1)}.
\end{align*}
Then there is a major subset $E_\tau '\subset E_\tau$ such that
\begin{align*}
|\Lambda(f_1,f_2,f_3)|\lesssim |E_1|^{\beta_1} |E_2|^{\beta_2} |E_3|^{\beta_3},
\end{align*}
whenever $|f_v|\leq \ind_{E_v {'}}$. Again we have set  $E{'} _v =E_v$ for $v\in\{1,2,3\}\setminus\{\tau\}.$
\end{corollary}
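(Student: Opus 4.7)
The plan is to reduce this corollary directly to the previous lemma by an appropriate labeling of the two non-maximal indices. Fix the triple $E_1, E_2, E_3$ with $E_\tau$ of maximal measure, and let $\{i,j\}=\{1,2,3\}\setminus\{\tau\}$. Because $|E_\tau|$ is maximal and each $q_v\geq 2$, both quantities $(|E_i|/|E_\tau|)^{q_i-1}$ and $(|E_j|/|E_\tau|)^{q_j-1}$ lie in $(0,1]$, so we can choose $(a,b)$ to be whichever ordering of $(i,j)$ yields
\[
(|E_b|/|E_\tau|)^{q_b-1} \;\leq\; (|E_a|/|E_\tau|)^{q_a-1} \;\leq\; 1.
\]
This is exactly the size ordering that the previous lemma requires of the triple $(E_a,E_b,E_\tau)$.

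It remains to check that the $\beta$-conditions of the lemma follow from those of the corollary for the labeling just chosen. The conditions imposed in the corollary are fully symmetric in the free indices: $\beta_v<1$ and $\beta_v/(q_v-1)\leq\rho+1/(q_v(q_v-1))$ are required for \emph{every} $v$, and the two-term bound $\beta_v/(q_v-1)+\beta_u/(q_u-1)\leq\rho+1/(q_v(q_v-1))+1/(q_u(q_u-1))$ for \emph{every} pair $v\neq u$. Specializing these to $v=b$, and then to $(v,u)=(b,a)$, reproduces verbatim the three inequalities in the hypothesis of the previous lemma. Applying that lemma therefore yields a major subset $E_\tau'\subset E_\tau$ and the bound
\[
|\Lambda(f_1,f_2,f_3)| \;\lesssim\; |E_a|^{\beta_a}|E_b|^{\beta_b}|E_\tau|^{\beta_\tau} \;=\; |E_1|^{\beta_1}|E_2|^{\beta_2}|E_3|^{\beta_3},
\]
which is the claim. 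There is essentially no obstacle beyond this relabeling: once we observe that the symmetric hypotheses of the corollary cover every ordering of $|E_i|^{q_i-1}$ and $|E_j|^{q_j-1}$ that can arise, the previous lemma does all the work.
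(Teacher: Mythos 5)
Your proposal is correct and matches the paper's (one-line) argument exactly: the corollary is obtained from the preceding lemma by choosing the labeling $(a,b)$ of the two non-maximal indices so that $(|E_b|/|E_\tau|)^{q_b-1}\leq(|E_a|/|E_\tau|)^{q_a-1}\leq 1$, after which the symmetric hypotheses of the corollary specialize to the lemma's hypotheses for that labeling. (You also silently corrected the evident typo in the corollary's two-index condition, reading $\frac{1}{q_v(q_v-1)}+\frac{1}{q_u(q_u-1)}$ on the right-hand side, which is the intended form.)
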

%%%%%%%%%%%%%%%%%%%%%%%%%%%%%% COROLLARY COROLLARY COROLLARY
%%%%%%%%%%%%%%%%%%%%%%%%%%%%%% COROLLARY COROLLARY COROLLARY
The conditions of Corollary \ref{c.main} are illustrated in Figures \ref{f.triangle}, \ref{f.general}. Corollary \ref{c.main} says that for parameters $\beta_1,\beta_2,\beta_3$ with $\sum_v \beta_v	=1$ and $(\beta_1,\beta_2)$ in the open hexagon $ABCDEF$, the trilinear form $\Lambda$ is of generalized restricted weak type $(\beta_1,\beta_2,\beta_3)$. Observe that the triangle $c$ corresponds to the local $L^{q_1}\times L^{q_2}$ estimates of Lemma \ref{l.above} which, for the scalar case, correspond to the local $L^2$ estimates. One simple way to determine the points $A,B,C,D,E$ and $F$ is to extend the sides of the local triangle $c$ until they cross the restriction lines $\beta_1=\frac{1}{q_1}+(q_1-1)\rho$, $\beta_2=\frac{1}{q_2}+(q_2-1)\rho$ and $\frac{\beta_1+\beta_2}{q_3-1}=\frac{1}{q_3}-\rho$. We get
\begin{align*}
	A:&\quad(\frac{1}{q_1}-\rho q_3,\frac{1}{q_2},\frac{1}{q_3}+\rho q_3-\rho),\quad\quad D:\quad (\frac{1}{q_1}+\rho q_1-\rho,\frac{1}{q_2}, \frac{1}{q_3}-\rho q_1),
	\\
	B:&\quad(\frac{1}{q_1},\frac{1}{q_2}-\rho q_3,\frac{1}{q_3}+\rho q_3 - \rho),\quad\quad E:\quad (\frac{1}{q_1},\frac{1}{q_2}+\rho q_2-\rho,\frac{1}{q_3}-\rho q_2),
	\\
	C:&\quad (\frac{1}{q_1}+\rho q_1-\rho, \frac{1}{q_2}-\rho q_1, \frac{1}{q_3}),\quad\quad F:\quad (\frac{1}{q_1}-\rho q_2, \frac{1}{q_2}+\rho q_2-\rho,\frac{1}{q_3}).
\end{align*}

Interpolating the generalized restricted weak type estimates as in \cite[Theorems 3.2, 3.6 and 3.8]{Thiele:book},
Corollary \ref{c.main} implies our main theorem:

%%%%%%%%%%%%%%%%% MAIN THEOREM %%%%%%%%%%%%%%%%%%%%
\begin{theorem}\label{thm:main} Suppose that $\sum_{v=1} ^3\beta_v=1$ and $(\beta_1,\beta_2)\in ABCDEF$ where $ABCDEF$ denotes the open convex hexagon of Figure \ref{f.general}.
	
\noindent{(i)} If $\beta_v>0$ for all $v=1,2,3$ then $\Lambda$ satisfies the strong bounds
\begin{align*}
	|\Lambda(f_1,f_2,f_3)|\lesssim \prod_{v=1} ^3 \|f\|_{L^{1/\beta_v} (\RR:X_v)}.
\end{align*}

\noindent{(ii)} If $\beta_v\leq 0$ for one $v\in\{1,2,3\}$ then $\beta_{u},\beta_{\tau}>0$ for $u,\tau\in\{1,2,3\}\setminus\{ v\}$ and the bilinear quartile operator satisfies
\begin{align*}
	\norm {B_v}(f_u,f_{\tau}).L^{1/(1-\beta_v)}(\RR;X_v).   \lesssim \norm f_u.L^{1/ \beta_u}(\RR;X_u). \norm f_\tau.L^{1/\beta_\tau}(\RR;X_\tau).
\end{align*}
\end{theorem}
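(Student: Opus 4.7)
The plan is to derive Theorem \ref{thm:main} by appealing to the multilinear real-interpolation machinery for operators of generalized restricted weak type, using Corollary \ref{c.main} as the sole analytic input. Two preliminary observations drive the whole argument: first, because the hexagon $ABCDEF$ is \emph{open}, any interior triple $(\beta_1,\beta_2,\beta_3)$ lies in a full neighborhood of admissible triples, so we have restricted weak-type estimates in all directions around the target exponent; second, Corollary \ref{c.main} is symmetric in the roles of the three components $E_1,E_2,E_3$ in the sense that the major subset can always be chosen inside the component of maximal measure, hence restricted weak-type estimates are available in all three dual formulations of the trilinear form.

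For part (i), where all $\beta_v>0$, I would apply the standard multilinear Marcinkiewicz scheme as presented in \cite[Theorems 3.2 and 3.6]{Thiele:book}. Concretely: expand each $f_v$ into a sum of normalized indicators of dyadic level sets, $f_v=\sum_{k_v}2^{k_v}\mathbf 1_{F_v^{k_v}}$; for each triple $(k_1,k_2,k_3)$, identify which of $F_1^{k_1},F_2^{k_2},F_3^{k_3}$ has the largest measure, and invoke the restricted weak-type estimate at a triple $\beta'\in ABCDEF$ chosen slightly off $\beta$ in a direction that produces geometric decay in the $(k_1,k_2,k_3)$ summation. Summing the geometric series yields the strong bound $|\Lambda(f_1,f_2,f_3)|\lesssim\prod_v\|f_v\|_{L^{1/\beta_v}(\RR;X_v)}$, which by the duality $\Lambda=\langle B_3(f_1,f_2),f_3\rangle$ is equivalent to the desired $L^{p_1}\times L^{p_2}\to L^{p_3'}$ bound, and hence a strong bilinear bound for $B=B_3$ through the relations in \S\ref{s.quartileOperator}.

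For part (ii), where some $\beta_v\le 0$, the exponent $1/\beta_v$ is not a meaningful Lebesgue index, so I would switch to the dual formulation and regard the trilinear form as $\Lambda(f_1,f_2,f_3)=\langle B_v(f_u,f_\tau),f_v\rangle$ for $\{u,\tau\}=\{1,2,3\}\setminus\{v\}$, then test against arbitrary $f_v\in L^{1/(1-\beta_v)'}(\RR;X_v^\ast)$. The same multilinear interpolation scheme as in part (i) now produces the bilinear estimate claimed, namely $\|B_v(f_u,f_\tau)\|_{L^{1/(1-\beta_v)}(\RR;X_v)}\lesssim\|f_u\|_{L^{1/\beta_u}(\RR;X_u)}\|f_\tau\|_{L^{1/\beta_\tau}(\RR;X_\tau)}$; this is exactly the statement of \cite[Theorem 3.8]{Thiele:book}. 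The hexagon constraint $\beta_v<1$, combined with $\beta_v\le 0$, guarantees that $1/(1-\beta_v)\in(1,\infty)$ so that the target space is a genuine Lebesgue space, and that $\beta_u,\beta_\tau>0$ (since $\sum\beta_j=1$), so the source spaces are genuine Lebesgue spaces too.

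The main obstacle is purely combinatorial bookkeeping: in the Marcinkiewicz scheme one must ensure that the perturbed triples $\beta'$ used to obtain geometric decay remain inside $ABCDEF$ \emph{and} that the ``major subset'' clause of Corollary \ref{c.main} can be honored for whichever component of $F_1^{k_1},F_2^{k_2},F_3^{k_3}$ turns out to be maximal. This is precisely why the corollary is phrased symmetrically over all permutations of the three indices, and why openness of the hexagon (not merely membership of the target point) is used. Once these two features are in hand, the proof reduces to the mechanical invocation of Thiele's interpolation theorems.
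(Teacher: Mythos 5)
Your proposal follows exactly the paper's route: the paper's entire proof of Theorem \ref{thm:main} consists of the observation that Corollary \ref{c.main}, applied over all permutations of the indices, supplies generalized restricted weak type estimates at every point of the open hexagon $ABCDEF$, whence the multilinear interpolation results \cite[Theorems 3.2, 3.6 and 3.8]{Thiele:book} yield the strong bounds of (i) and the dual bilinear bounds of (ii), precisely as you describe. One small slip in your commentary on part (ii): for $\beta_v\le 0$ the target exponent $1/(1-\beta_v)$ lies in $(0,1]$ rather than $(1,\infty)$, so one cannot literally test against $L^{1/(1-\beta_v)'}$ by duality — this is exactly why the major-subset formulation and \cite[Theorem 3.8]{Thiele:book} are required, which you do correctly invoke, so the argument stands.
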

%%%%%%%%%%%%%%%%% MAIN THEOREM %%%%%%%%%%%%%%%%%%%%

\begin{figure}[htb]
\centering
 \def\svgwidth{350pt}
%% Creator: Inkscape inkscape 0.48.2, www.inkscape.org
%% PDF/EPS/PS + LaTeX output extension by Johan Engelen, 2010
%% Accompanies image file '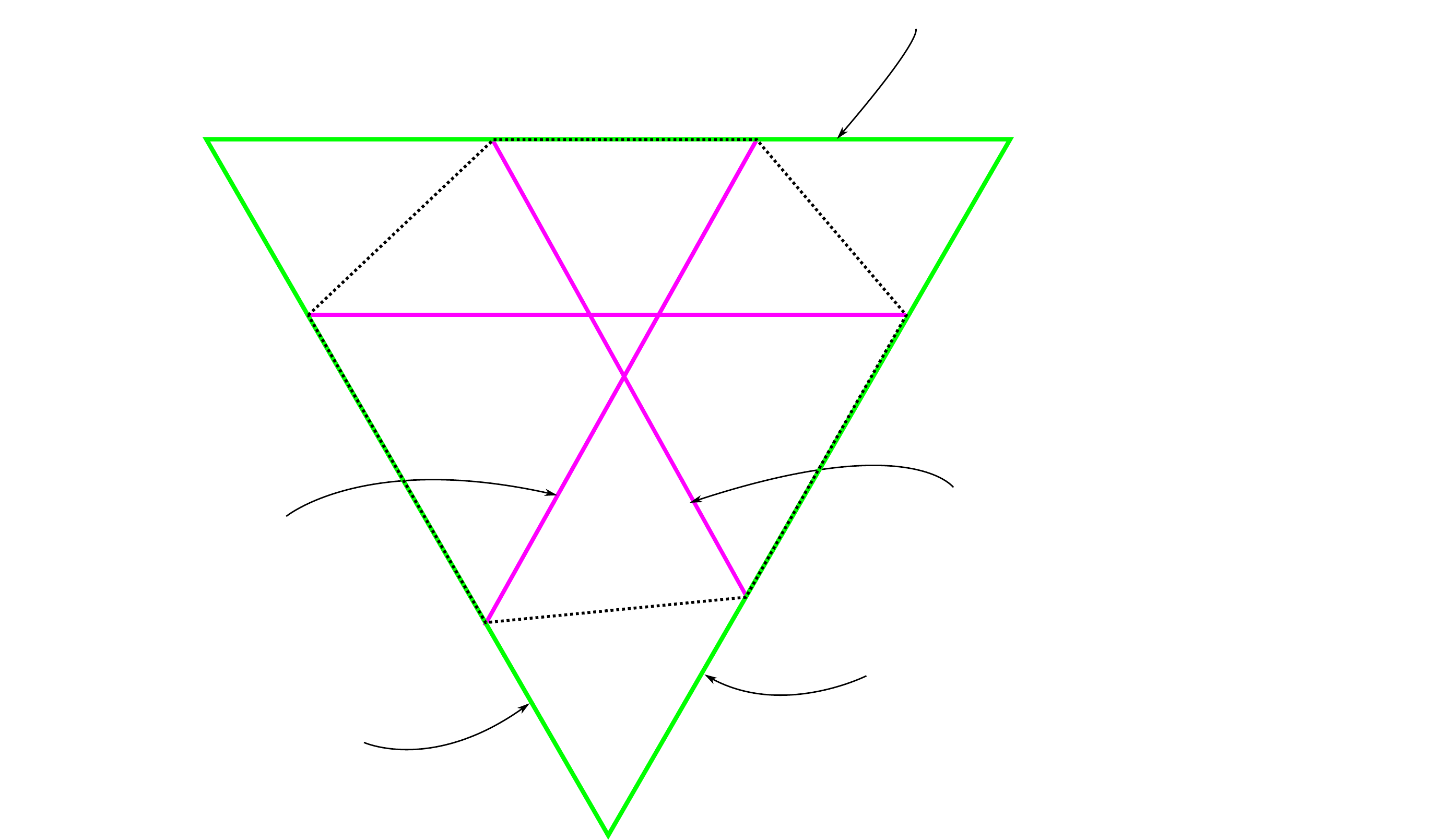' (pdf, eps, ps)
%%
%% To include the image in your LaTeX document, write
%%   \input{<filename>.pdf_tex}
%%  instead of
%%   \includegraphics{<filename>.pdf}
%% To scale the image, write
%%   \def\svgwidth{<desired width>}
%%   \input{<filename>.pdf_tex}
%%  instead of
%%   \includegraphics[width=<desired width>]{<filename>.pdf}
%%
%% Images with a different path to the parent latex file can
%% be accessed with the `import' package (which may need to be
%% installed) using
%%   \usepackage{import}
%% in the preamble, and then including the image with
%%   \import{<path to file>}{<filename>.pdf_tex}
%% Alternatively, one can specify
%%   \graphicspath{{<path to file>/}}
%% 
%% For more information, please see info/svg-inkscape on CTAN:
%%   http://tug.ctan.org/tex-archive/info/svg-inkscape
%%
\begingroup%
  \makeatletter%
  \providecommand\color[2][]{%
    \errmessage{(Inkscape) Color is used for the text in Inkscape, but the package 'color.sty' is not loaded}%
    \renewcommand\color[2][]{}%
  }%
  \providecommand\transparent[1]{%
    \errmessage{(Inkscape) Transparency is used (non-zero) for the text in Inkscape, but the package 'transparent.sty' is not loaded}%
    \renewcommand\transparent[1]{}%
  }%
  \providecommand\rotatebox[2]{#2}%
  \ifx\svgwidth\undefined%
    \setlength{\unitlength}{794.77019043bp}%
    \ifx\svgscale\undefined%
      \relax%
    \else%
      \setlength{\unitlength}{\unitlength * \real{\svgscale}}%
    \fi%
  \else%
    \setlength{\unitlength}{\svgwidth}%
  \fi%
  \global\let\svgwidth\undefined%
  \global\let\svgscale\undefined%
  \makeatother%
  \begin{picture}(1,0.58287471)%
    \put(0,0){\includegraphics[width=\unitlength]{triangle-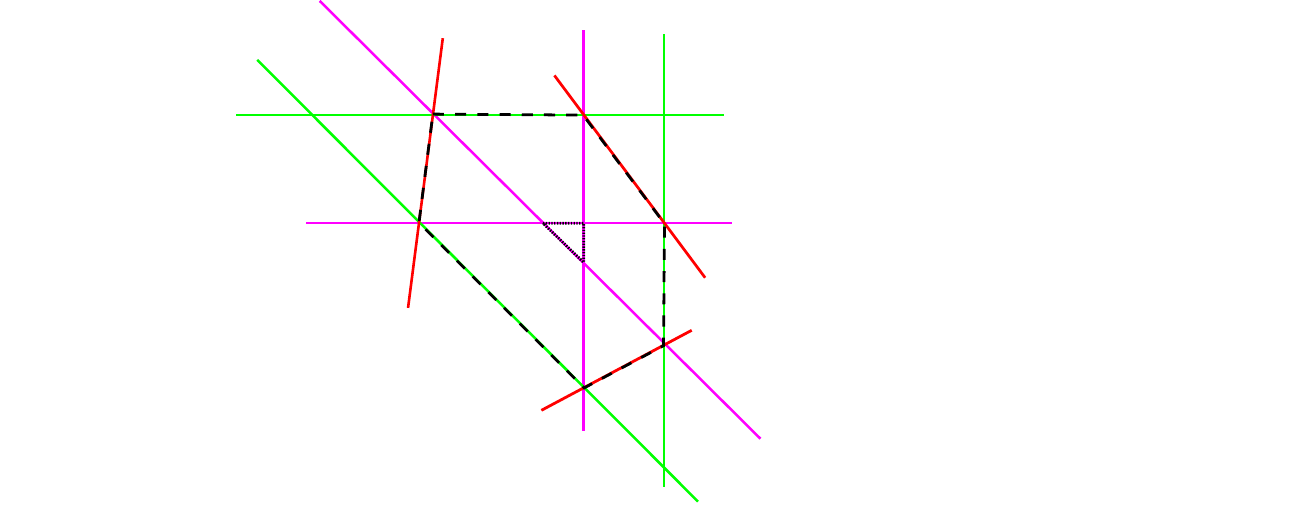}}%
    \put(0.33073833,0.50191826){\color[rgb]{0,0,0}\makebox(0,0)[lb]{\smash{$C$}}}%
    \put(0.50886402,0.50275848){\color[rgb]{0,0,0}\makebox(0,0)[lb]{\smash{$D$}}}%
    \put(0.65002026,0.35404031){\color[rgb]{0,0,0}\makebox(0,0)[lb]{\smash{$E$}}}%
    \put(0.53575094,0.15911034){\color[rgb]{0,0,0}\makebox(0,0)[lb]{\smash{$F$}}}%
    \put(0.30049057,0.13894512){\color[rgb]{0,0,0}\makebox(0,0)[lb]{\smash{$A$}}}%
    \put(0.16892624,0.35151969){\color[rgb]{0,0,0}\makebox(0,0)[lb]{\smash{$B$}}}%
    \put(0.42652291,0.34395774){\color[rgb]{0,0,0}\makebox(0,0)[lb]{\smash{$c$}}}%
    \put(0.2584798,0.3742055){\color[rgb]{0,0,0}\makebox(0,0)[lb]{\smash{$\beta_1=\frac{1}{q_1}$}}}%
    \put(0.05273608,0.20434871){\color[rgb]{0,0,0}\rotatebox{0.12012814}{\makebox(0,0)[lb]{\smash{$\beta_2=\frac{1}{q_2}$}}}}%
    \put(0.63151974,0.2244803){\color[rgb]{0,0,0}\makebox(0,0)[lb]{\smash{$\beta_3=\frac{1}{q_3}$}}}%
    \put(0.61053011,0.11373866){\color[rgb]{0,0,0}\makebox(0,0)[lb]{\smash{$\beta_2=\frac{1}{q_2}+(q_2-1)\rho$}}}%
    \put(-0.00167108,0.0743167){\color[rgb]{0,0,0}\makebox(0,0)[lb]{\smash{$\beta_3=\frac{1}{q_3}+(q_3-1)\rho$}}}%
    \put(0.44892891,0.5675794){\color[rgb]{0,0,0}\makebox(0,0)[lb]{\smash{$\beta_1=\frac{1}{q_1}+(q_1-1)\rho$}}}%
  \end{picture}%
\endgroup%
\caption[Generalized restricted weak-type convex hull]{Generalized restricted weak-type for $\Lambda:X_1\times X_2\times X_3\to \C$ in the plane $\beta_1+\beta_2+\beta_3=1$.}\label{f.triangle}
\end{figure}
\begin{figure}[hbt]
\centering
 \def\svgwidth{530pt}
%% Creator: Inkscape inkscape 0.48.2, www.inkscape.org
%% PDF/EPS/PS + LaTeX output extension by Johan Engelen, 2010
%% Accompanies image file 'general.pdf' (pdf, eps, ps)
%%
%% To include the image in your LaTeX document, write
%%   \input{<filename>.pdf_tex}
%%  instead of
%%   \includegraphics{<filename>.pdf}
%% To scale the image, write
%%   \def\svgwidth{<desired width>}
%%   \input{<filename>.pdf_tex}
%%  instead of
%%   \includegraphics[width=<desired width>]{<filename>.pdf}
%%
%% Images with a different path to the parent latex file can
%% be accessed with the `import' package (which may need to be
%% installed) using
%%   \usepackage{import}
%% in the preamble, and then including the image with
%%   \import{<path to file>}{<filename>.pdf_tex}
%% Alternatively, one can specify
%%   \graphicspath{{<path to file>/}}
%% 
%% For more information, please see info/svg-inkscape on CTAN:
%%   http://tug.ctan.org/tex-archive/info/svg-inkscape
%%
\begingroup%
  \makeatletter%
  \providecommand\color[2][]{%
    \errmessage{(Inkscape) Color is used for the text in Inkscape, but the package 'color.sty' is not loaded}%
    \renewcommand\color[2][]{}%
  }%
  \providecommand\transparent[1]{%
    \errmessage{(Inkscape) Transparency is used (non-zero) for the text in Inkscape, but the package 'transparent.sty' is not loaded}%
    \renewcommand\transparent[1]{}%
  }%
  \providecommand\rotatebox[2]{#2}%
  \ifx\svgwidth\undefined%
    \setlength{\unitlength}{378.29629517bp}%
    \ifx\svgscale\undefined%
      \relax%
    \else%
      \setlength{\unitlength}{\unitlength * \real{\svgscale}}%
    \fi%
  \else%
    \setlength{\unitlength}{\svgwidth}%
  \fi%
  \global\let\svgwidth\undefined%
  \global\let\svgscale\undefined%
  \makeatother%
  \begin{picture}(1,0.40444256)%
    \put(0,0){\includegraphics[width=\unitlength]{general.pdf}}%
    \put(0.14072331,0.23198879){\color[rgb]{0,0,0}\makebox(0,0)[lb]{\smash{$\beta_2=\frac{1}{q_2}$}}}%
    \put(0.39781825,0.38887342){\color[rgb]{0,0,0}\makebox(0,0)[lb]{\smash{$\beta_1=\frac{1}{q_1}$}}}%
    \put(0.58722231,0.0744167){\color[rgb]{0,0,0}\makebox(0,0)[lb]{\smash{$\beta_1+\beta_2=1-\frac{1}{q_3}$}}}%
    \put(0.53527664,0.00398993){\color[rgb]{0,0,0}\makebox(0,0)[lb]{\smash{$\frac{\beta_1+\beta_2}{q_3-1}=\frac{1}{q_3}-\rho$}}}%
    \put(-0.00105324,0.31385963){\color[rgb]{0,0,0}\makebox(0,0)[lb]{\smash{$\beta_2=\frac{1}{q_2}+(q_2-1)\rho$}}}%
    \put(0.2931384,0.21470904){\color[rgb]{0,0,0}\makebox(0,0)[lb]{\smash{$A$}}}%
    \put(0.42772677,0.08132912){\color[rgb]{0,0,0}\makebox(0,0)[lb]{\smash{$B$}}}%
    \put(0.49721124,0.11818613){\color[rgb]{0,0,0}\makebox(0,0)[lb]{\smash{$C$}}}%
    \put(0.51337392,0.23963292){\color[rgb]{0,0,0}\makebox(0,0)[lb]{\smash{$D$}}}%
    \put(0.44962949,0.32890524){\color[rgb]{0,0,0}\makebox(0,0)[lb]{\smash{$E$}}}%
    \put(0.3386054,0.32437372){\color[rgb]{0,0,0}\makebox(0,0)[lb]{\smash{$F$}}}%
    \put(0.43216362,0.21894007){\color[rgb]{0,0,0}\makebox(0,0)[lb]{\smash{$c$}}}%
    \put(0.4950965,0.38887342){\color[rgb]{0,0,0}\makebox(0,0)[lb]{\smash{$\beta_1=\frac{1}{q_1}+(q_1-1)\rho$}}}%
  \end{picture}%
\endgroup%
\caption[Generalized restricted weak-type convex hull]{Generalized restricted weak-type for $\Lambda:X_1\times X_2\times X_3\to \C$ in the $(\beta_1,\beta_2)$-plane.}\label{f.general}
\end{figure}

%%%%%%%%%%%%%%%%%%%%%%%%%%%%%% EXAMPLE EXAMPLE EXAMPLE
%%%%%%%%%%%%%%%%%%%%%%%%%%%%%% EXAMPLE EXAMPLE EXAMPLE
\begin{example} Suppose that $X$ is a UMD Banach space such that both $X,X^*$ have quartile type $2< q<4$. For example this is the case if $X$ is an interpolation space $X=[Y,H]_\theta$ between some UMD Banach space and a Hilbert space $H$. Define $X_1\eqdef X$, $X_2\eqdef X^*$ and $X_3\eqdef \C$. Observe that $q<4$ implies that $\rho=\frac{2}{q}-\frac{1}{2}>0.$ The trilinear form $\Pi:X\times X^*\times \C \to \C$ is defined in the obvious way:
\begin{align*}
\Pi(x,x^*,\lambda)=\lambda  \ip x,x^*,=\lambda x^*(x).
\end{align*}
It is not hard to see that the Theorem~\ref{thm:main} specialized to this case gives the following statements:

\noindent{(i)} If $2<q<3$ then the bilinear quartile operator maps $L^{p_1}(\RR;X)\times L^{p_2}(\RR;X^*)$ into $L^r(\RR;\C)$ whenever
\begin{align*}
\frac{1}{r}=\frac{1}{p_1}+\frac{1}{p_2},\quad \big(\frac{5-q}{2}-\frac{1}{q}\big)^{-1}<p_1,p_2\leq \infty,\quad \frac{2}{5-q}<r<\frac{q}{q-2}.
\end{align*}

Figure \ref{f.q<3} shows the convex hull of the conditions for $(\beta_1,\beta_2,\beta_3)$ in the $(\beta_1,\beta_2)$-plane in the case $2<q<3$. Remember that $\beta_3$ is then uniquely determined as $\beta_3=1-\beta_1-\beta_2$

\begin{figure}[htb]
\centering
 \def\svgwidth{350pt}
%% Creator: Inkscape inkscape 0.48.2, www.inkscape.org
%% PDF/EPS/PS + LaTeX output extension by Johan Engelen, 2010
%% Accompanies image file '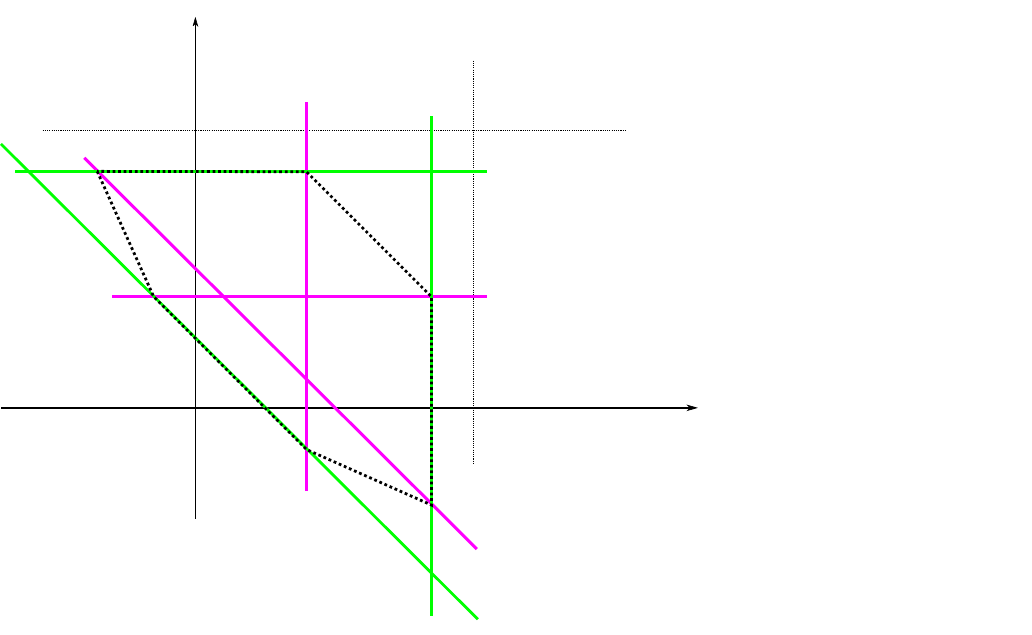' (pdf, eps, ps)
%%
%% To include the image in your LaTeX document, write
%%   \input{<filename>.pdf_tex}
%%  instead of
%%   \includegraphics{<filename>.pdf}
%% To scale the image, write
%%   \def\svgwidth{<desired width>}
%%   \input{<filename>.pdf_tex}
%%  instead of
%%   \includegraphics[width=<desired width>]{<filename>.pdf}
%%
%% Images with a different path to the parent latex file can
%% be accessed with the `import' package (which may need to be
%% installed) using
%%   \usepackage{import}
%% in the preamble, and then including the image with
%%   \import{<path to file>}{<filename>.pdf_tex}
%% Alternatively, one can specify
%%   \graphicspath{{<path to file>/}}
%% 
%% For more information, please see info/svg-inkscape on CTAN:
%%   http://tug.ctan.org/tex-archive/info/svg-inkscape
%%
\begingroup%
  \makeatletter%
  \providecommand\color[2][]{%
    \errmessage{(Inkscape) Color is used for the text in Inkscape, but the package 'color.sty' is not loaded}%
    \renewcommand\color[2][]{}%
  }%
  \providecommand\transparent[1]{%
    \errmessage{(Inkscape) Transparency is used (non-zero) for the text in Inkscape, but the package 'transparent.sty' is not loaded}%
    \renewcommand\transparent[1]{}%
  }%
  \providecommand\rotatebox[2]{#2}%
  \ifx\svgwidth\undefined%
    \setlength{\unitlength}{297.13157959bp}%
    \ifx\svgscale\undefined%
      \relax%
    \else%
      \setlength{\unitlength}{\unitlength * \real{\svgscale}}%
    \fi%
  \else%
    \setlength{\unitlength}{\svgwidth}%
  \fi%
  \global\let\svgwidth\undefined%
  \global\let\svgscale\undefined%
  \makeatother%
  \begin{picture}(1,0.61532706)%
    \put(0,0){\includegraphics[width=\unitlength]{restricted-q.pdf}}%
    \put(0.20824107,0.58887203){\color[rgb]{0,0,0}\makebox(0,0)[lb]{\smash{$\beta_2$}}}%
    \put(0.17054738,0.49732999){\color[rgb]{0,0,0}\makebox(0,0)[lb]{\smash{$1$}}}%
    \put(0.63902667,0.18231811){\color[rgb]{0,0,0}\makebox(0,0)[lb]{\smash{$\beta_1$}}}%
    \put(0.46402001,0.19578025){\color[rgb]{0,0,0}\makebox(0,0)[lb]{\smash{$1$}}}%
    \put(0.49132873,0.32482354){\color[rgb]{0,0,0}\makebox(0,0)[lb]{\smash{$\beta_2=\frac{1}{q}$}}}%
    \put(0.29228271,0.53252364){\color[rgb]{0,0,0}\makebox(0,0)[lb]{\smash{$\beta_1=\frac{1}{q}$}}}%
    \put(0.47209724,0.07635243){\color[rgb]{0,0,0}\makebox(0,0)[lb]{\smash{$\beta_1+\beta_2=\frac{1}{2}$}}}%
    \put(0.46671242,0.00634978){\color[rgb]{0,0,0}\makebox(0,0)[lb]{\smash{$\beta_1+\beta_2=\frac{q-2}{2}$}}}%
    \put(0.38920947,0.521754){\color[rgb]{0,0,0}\makebox(0,0)[lb]{\smash{$\beta_1=\frac{5-q}{2}-\frac{1}{q}$}}}%
    \put(0.4807514,0.44636652){\color[rgb]{0,0,0}\makebox(0,0)[lb]{\smash{$\beta_2=\frac{5-q}{2}-\frac{1}{q}$}}}%
  \end{picture}%
\endgroup%
\caption[Generalized restricted weak-type convex hull]{Generalized restricted weak-type estimates for $\Lambda:X\times X^*\times \C\to \C$, $2<q<3$.}\label{f.q<3}  
\end{figure}

\noindent{(ii)} If $3\leq q<4$ then the bilinear quartile operator maps $L^{p_1}(\RR;X)\times L^{p_2}(\RR;X^*)$ into $L^r(\RR;\C)$
whenever the conditions in $(i)$ hold and in addition
\begin{equation*}
  \frac{q^2-3q+1}{q}<\min\Big(\frac{q-1}{p_1}+\frac{q-2}{p_2},\frac{q-2}{p_1}+\frac{q-1}{p_2}\Big).
\end{equation*}

Figure \ref{f.q>3} describes the domain of restricted weak type estimates for $3\leq q <4$. In this case the convex hexagon is strictly contained in $[0,1)^2$.

\begin{figure}[htb]
\centering
 \def\svgwidth{400pt}
%% Creator: Inkscape inkscape 0.48.2, www.inkscape.org
%% PDF/EPS/PS + LaTeX output extension by Johan Engelen, 2010
%% Accompanies image file '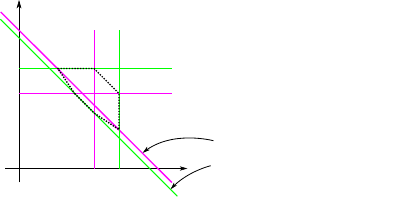' (pdf, eps, ps)
%%
%% To include the image in your LaTeX document, write
%%   \input{<filename>.pdf_tex}
%%  instead of
%%   \includegraphics{<filename>.pdf}
%% To scale the image, write
%%   \def\svgwidth{<desired width>}
%%   \input{<filename>.pdf_tex}
%%  instead of
%%   \includegraphics[width=<desired width>]{<filename>.pdf}
%%
%% Images with a different path to the parent latex file can
%% be accessed with the `import' package (which may need to be
%% installed) using
%%   \usepackage{import}
%% in the preamble, and then including the image with
%%   \import{<path to file>}{<filename>.pdf_tex}
%% Alternatively, one can specify
%%   \graphicspath{{<path to file>/}}
%% 
%% For more information, please see info/svg-inkscape on CTAN:
%%   http://tug.ctan.org/tex-archive/info/svg-inkscape
%%
\begingroup%
  \makeatletter%
  \providecommand\color[2][]{%
    \errmessage{(Inkscape) Color is used for the text in Inkscape, but the package 'color.sty' is not loaded}%
    \renewcommand\color[2][]{}%
  }%
  \providecommand\transparent[1]{%
    \errmessage{(Inkscape) Transparency is used (non-zero) for the text in Inkscape, but the package 'transparent.sty' is not loaded}%
    \renewcommand\transparent[1]{}%
  }%
  \providecommand\rotatebox[2]{#2}%
  \ifx\svgwidth\undefined%
    \setlength{\unitlength}{117.1828125bp}%
    \ifx\svgscale\undefined%
      \relax%
    \else%
      \setlength{\unitlength}{\unitlength * \real{\svgscale}}%
    \fi%
  \else%
    \setlength{\unitlength}{\svgwidth}%
  \fi%
  \global\let\svgwidth\undefined%
  \global\let\svgscale\undefined%
  \makeatother%
  \begin{picture}(1,0.48335656)%
    \put(0,0){\includegraphics[width=\unitlength]{restricted-q_3.pdf}}%
    \put(-0.00106673,0.45834574){\color[rgb]{0,0,0}\makebox(0,0)[lb]{\smash{$\beta_2$}}}%
    \put(0.44951131,0.00776776){\color[rgb]{0,0,0}\makebox(0,0)[lb]{\smash{$\beta_1$}}}%
    \put(0.44743227,0.31247708){\color[rgb]{0,0,0}\makebox(0,0)[lb]{\smash{$\beta_2=\frac{5-q}{2}-\frac{1}{q}$}}}%
    \put(0.44743227,0.24420768){\color[rgb]{0,0,0}\makebox(0,0)[lb]{\smash{$\beta_2=\frac{1}{q}$}}}%
    \put(0.2924917,0.43786501){\color[rgb]{0,0,0}\makebox(0,0)[lb]{\smash{$\beta_1=\frac{5-q}{2}-\frac{1}{q}$}}}%
    \put(0.1559529,0.43786501){\color[rgb]{0,0,0}\makebox(0,0)[lb]{\smash{$\beta_1=\frac{1}{q}$}}}%
    \put(0.53801777,0.11797389){\color[rgb]{0,0,0}\makebox(0,0)[lb]{\smash{$\beta_1+\beta_2=1/2$}}}%
    \put(0.52460765,0.06921027){\color[rgb]{0,0,0}\makebox(0,0)[lb]{\smash{$\beta_1+\beta_2=\frac{q-2}{2}$}}}%
  \end{picture}%
\endgroup%
\caption[Restricted weak-type convex hull]{Generalized restricted weak-type estimates for $\Lambda:X\times X^*\times \C\to \C$, $3<q<4$.}\label{f.q>3}
\end{figure}
\end{example}
%%%%%%%%%%%%%%%%%%%%%%%%%%%%%% EXAMPLE EXAMPLE EXAMPLE
%%%%%%%%%%%%%%%%%%%%%%%%%%%%%% EXAMPLE EXAMPLE EXAMPLE

\bibliographystyle{plain}%{amsalpha}%{ams-pln}%

 \end{document}